\newtheorem{theorem}{Theorem}[section]
\newtheorem{lemma}[theorem]{Lemma}
\newtheorem{proposition}[theorem]{Proposition}
\newtheorem{corollary}[theorem]{Corollary}
\begin{document}
\title[Principal eigenvalue of asymmetric  nonlocal diffusion operators]{Asymptotic limit of the principal eigenvalue of asymmetric  nonlocal diffusion operators and  propagation dynamics}

\author[Y. Du, X. Fang and W. Ni]{Yihong Du$^{\dag}$,\ Xiangdong Fang$^{\ddag}$,\  Wenjie Ni$^{\dag}$}
\thanks{$^{\dag}$School of Science and Technology, University of New England, Armidale, NSW 2351, Australia}
\thanks{$^{\ddag}$School of Mathematical Sciences, Dalian University of Technology,  Dalian 116024, China}
\thanks{ \small {\bf e-mails:}  ydu@une.edu.au (Du), fangxd0401@dlut.edu.cn (Fang), wni2@une.edu.au (Ni)}

\numberwithin{equation}{section}
\date{\today}

\maketitle
\begin{abstract}
For fixed $c\in\mathbb R$, $l>0$ and a general non-symmetric kernel function $J(x)$ satisfying a standard assumption, we consider the nonlocal diffusion operator
\begin{align*}
\mathcal{L}^{J, c}_{(-l,l)}[\phi](x):=\int_{-l}^lJ(x-y)\phi(y)\,dy+c\phi'(x),
\end{align*}
and prove that its principal eigenvalue $\lambda_p(\mathcal{L}^{J, c}_{(-l,l)})$
has the following asymptotic limit:
\begin{equation*}\label{l-to-infty-c}
\lim\limits_{l\to \infty}\lambda_p(\mathcal{L}^{J, c}_{(-l,l)})=\inf\limits_{\nu\in\mathbb{R}}\big[\int_{\mathbb{R}}J(x)e^{-\nu x}\,dx+c\nu\big].
\end{equation*}
We then demonstrate how this result can be applied to determine the propagation dynamics of the associated Cauchy problem 
\begin{equation*}
	\label{cau}
	\left\{
	\begin{array}{ll}
		\displaystyle u_t = d \big[\int_{\mathbb{R}} J(x-y) u(t,y) \, dy -  u(t,x)\big] + f(u), & t > 0, \; x \in \mathbb{R}, \\
		u(0, x) = u_0(x), & x \in \mathbb{R},
	\end{array}
	\right.
\end{equation*}
with a KPP nonlinear term $f(u)$.
This provides a new approach to understand the propagation dynamics of KPP type models, very different from those based on traveling wave solutions or on the dynamical systems method of Weinberger \cite{wein1982}.

 \vskip0.23in

\noindent
{\bf Keywords:} Asymmetric nonlocal diffusion; Principal eigenvalue; Spreading speed.\\

\noindent
{\bf Mathematics Subject Classification:} 35K57, 35R20.
\vskip0.1in
\end{abstract}



\section{Introduction}

 Starting from the pioneering works of Fisher \cite{fisher} and  Kolmogorov-Petrovski\u{\i}-Piskunov (KPP) \cite{kpp}, the
 Cauchy problem
\begin{equation}
	\label{cauchy}
	\left\{
	\begin{array}{ll}
		U_t - dU_{xx} = f(U), & t > 0, \; x \in \mathbb{R}, \\
		U(0, x) = U_0(x), & x \in \mathbb{R},
	\end{array}
	\right.
\end{equation}
has been widely used to model  the propagation of species, where $U(t,x)$ stands for the population density at time $t$ and spatial location $x$ of a new or invading species, whose initial population $U_0(x)$ is  a nonnegative continuous function with nonempty compact support, to represent the assumption that initially the population exists only locally in space. Fisher \cite{fisher} assumed $f(U)=U(1-U)$ and KPP \cite{kpp} allowed more general functions $f$ but with similar behaviour, which is now known as the KPP condition characterised by
\[ {\bf (f_{KPP})}: \ \ \ \ \begin{cases} f\in C^1([0,\infty)), \mbox{ $f(0)=f(1)=0$, $f'(0)>0>f'(1)$, }\\
\mbox{$f(u)>0$ in $(0,1)$, $f(u)<0$ in $(1,\infty)$,  $f(u)/u$ is decreasing for $u>0$.}\end{cases}
\]
(The original condition in KPP \cite{kpp} is actually slightly weaker in that it only requires $f'(u)< f'(0)$ for $u> 0$.)

A striking feature of \eqref{cauchy} with $f$ satisfying ${\bf (f_{KPP})}$ is that it predicts consistent successful spreading with an asymptotic spreading speed: There exists $c^*>0$ such that  for any small $\epsilon>0$, 
 \begin{equation}\label{c*}
  \lim_{t\to\infty}U(t,x)=\begin{cases} 1 &\mbox{ uniformly for } x\in [(-c^*+\epsilon)t, (c^*-\epsilon)t],\\
  0 & \mbox{ uniformly for } x\in \mathbb R\setminus [(-c^*-\epsilon)t, (c^*+\epsilon)t].
  \end{cases}
 \end{equation}
 This fact was  proved by Aronson and Weinberger \cite{AW,AW1978}, where $c^*$ was first determined independently by Fisher  and KPP  in 1937, who found in \cite{fisher, kpp}
 that \eqref{cauchy} admits a traveling wave $U(t,x)=\phi(x-ct)$ with $\phi(-\infty)=1$ and $\phi(\infty)=0$  if and only if  $c\geq c^*:=2\sqrt {f'(0)d}$, and they further claimed that $c^*$ is the spreading speed of the species. The existence of a spreading speed was supported by numerous observations of real world examples, the most well-known being the spreading of muskrats in Europe in the early 20th century; see
  \cite{SK, S51} for more details.

In \eqref{cauchy}, the species' dispersal is governed by the diffusion term $d U_{xx}$, known as  ``local diffusion", which does not capture nonlocal factors such as long-range dispersal. To address this shortcoming of \eqref{cauchy}, a number of nonlocal diffusion operators have been proposed and used to replace the local diffusion term $dU_{xx}$. A widely used nonlocal version of \eqref{cauchy} is given by
\begin{equation}
	\label{cau}
	\left\{
	\begin{array}{ll}
		\displaystyle u_t = d \big[\int_{\mathbb{R}} J(x-y) u(t,y) \, dy -  u(t,x)\big] + f(u), & t > 0, \; x \in \mathbb{R}, \\
		u(0, x) = u_0(x), & x \in \mathbb{R},
	\end{array}
	\right.
\end{equation}
where the kernel function $J:\mathbb{R}\mapsto\mathbb{R}$ is assumed to satisfy the following standard condition
\begin{itemize}
\item [${\bf (J):}$] $J\in C(\mathbb{R})\cap L^{\infty}(\mathbb{R})$, $J\geq 0$, $J(0)>0$ and $\displaystyle \int_{\mathbb{R}}J(x)\,dx=1$.
\end{itemize}
Biologically $J(x-y)$ represents the probability that an individual at location $y$ moves to location $x$ in a unit of time, and $d$ represents the frequency of such movements in one time unit. It is well known that \eqref{cauchy} can be viewed as a limiting problem of \eqref{cau} when $J$ is replaced by a sequence of suitable constant multiples of some symmetric kernel functions $J_n$ whose supporting set shrinks to 0 as $n\to\infty$ (see, e.g., \cite{andreu2010, SX}).

 In \eqref{cau}, similar to \eqref{cauchy}, we  assume that $u_0(x)$ is a nonnegative continuous function with nonempty compact support.

When the kernel function $J(x)$ is symmetric (i.e., $J(x)=J(-x)$), \eqref{cau} has been extensively studied, and we refer to \cite{BZ, BCV-JMB, CD, CD07, HMMV, R}
and the references therein for some of the literature. In particular, if $J$ is symmetric  and satisfies the so-called ``thin-tail" condition
\begin{equation}\label{thin-tail}
\int_{\mathbb R} J(x) e^{\mu |x|}dx<\infty \mbox{ for some constant } \mu>0,
\end{equation}
then it follows from \cite{wein1982, yagisita} that \eqref{cau} also exhibits a propagation speed as described by \eqref{c*}, except that now the spreading speed $c^*$ is determined by  the associated traveling wave solution of \eqref{cau}.

In contrast, when $J(x)$ is not symmetric, the understanding of  \eqref{cau} is much less complete. Under the condition \eqref{thin-tail} and several other technical assumptions on $J(x)$, it was shown in \cite{LPL} that the unique solution $u(t,x)$ of \eqref{cau} satisfies
\[
\lim_{t\to\infty} u(t, x+ct)=\begin{cases} 1 & \mbox{ locally uniformly in $x\in\mathbb R$ if } c\in (c_*^-, c_*^+),\\
0 & \mbox{ locally uniformly in $x\in\mathbb R$ if } c\in \mathbb R\setminus [c_*^-, c_*^+],
\end{cases}
\]
where	
\begin{equation}\label{c*+-}
	\begin{cases}
		\displaystyle c_*^- := \sup_{\nu < 0} \frac{d\int_{\mathbb{R}} J(x)e^{\nu x}\,dx - d + f'(0)}{\nu}, \\
		\displaystyle c_*^+ := \inf_{\nu > 0} \frac{d\int_{\mathbb{R}} J(x)e^{\nu x}\,dx - d + f'(0)}{\nu}.
	\end{cases}
	\end{equation}
	This indicates that $c_*^-$ is the leftward spreading speed and $c_*^+$ is the rightward spreading speed. The formulas for $c_*^-$ and $c_*^+$  in \eqref{c*+-} can be found in \cite{MK} as the slowest speeds of the leftward and rightward traveling wave solutions, which can be obtained by looking for traveling waves of the form $u(t,x)=e^{\nu (x-ct)}$ for the associated linear problem
	\[
	\displaystyle u_t = d \big[\int_{\mathbb{R}} J(x-y) u(t,y) \, dy -  u(t,x)\big] + f'(0)u.
	\]
	
	More recently, in \cite{xu}, without the extra technical conditions on $J(x)$ of \cite{LPL}, it was proved,   under the condition \eqref{thin-tail} only, that  $c_*^-<c_*^+$ always holds  and for any small $\epsilon>0$,
	there exists $\delta=\delta_\epsilon>0$ such that 
\begin{equation}\label{xu-li-ruan}
\begin{cases} \lim_{t\to\infty} u(t,x)=0 &\mbox{ uniformly for }  x\in \mathbb R\setminus [(c_*^--\epsilon)t, (c_*^++\epsilon)t],\\
	\liminf_{t \to \infty} u(t,x)  \geq \delta &\mbox{ uniformly for }  x\in [(c_*^-+\epsilon)t, (c_*^+-\epsilon)t].
	\end{cases}
\end{equation}
This result was proved  by constructing suitable upper and lower solutions. One expects that, in the current situation, a more precise result of the form \eqref{c*} holds for $u(t,x)$ with 
$-c^*$ and $ c^*$ there replaced by $c_*^-$ and $ c_*^+$, respectively.  In this paper, we will show that this is indeed the case (as a consequence of a more general result without requiring \eqref{thin-tail}), by using a new approach.

Without assuming the thin-tail condition \eqref{thin-tail},	clearly $c_*^-$ and $c_*^+$ need not be finite. Consider the following left and right thin-tail conditions		
\[\begin{aligned}
	&\mbox{${\bf  (J_{thin}^+):}$ \ \ \ \ \ \ $\displaystyle \int_{0}^{\infty}J(x)e^{\lambda x}\,dx<\infty$ for some $\lambda>0$;}\\
	&\mbox{${\bf  (J_{thin}^-):}$\ \ \ \ \ \ \  $\displaystyle \int^{0}_{-\infty}J(x)e^{-\lambda x}\,dx<\infty$ for some $\lambda>0$.}
\end{aligned}
\]
It is easy to see that $c_*^-$ is finite if and only if ${\bf (J_{thin}^-)}$ holds, and $c_*^+$ is finite if and only if ${\bf (J_{thin}^+)}$ holds. From now on, we define
\begin{equation}\label{c*-gen}
\begin{cases}
c_*^-=-\infty &\mbox{  if ${\bf (J_{thin}^-)}$ does not hold,}\\
c_*^+=\infty &\mbox{  if ${\bf (J_{thin}^+)}$ does not hold.}
\end{cases}
\end{equation}

We will prove the following result on the propagation dynamics of \eqref{cau}:

\begin{theorem}\label{th1.3a} Suppose that ${\bf (J)}$ and ${\bf (f_{KPP})}$ hold, and $u(t,x)$ is the solution to \eqref{cau}. Then
	\begin{align*}
	 \lim_{t\to \infty} u(t,x)=\begin{cases} 1 \mbox{ uniformly for } x\in [a_1t, b_1t] \mbox{ provided that } [a_1,b_1]\subset (c_*^-, c_*^+),\\
	 0 \mbox{ uniformly for $x\leq a_2t$ provided that $c_*^->-\infty$ and $a_2<c_*^-$},\\
	 0 \mbox{ uniformly for $x\geq b_2t$ provided that $c_*^+<\infty$ and $b_2>c_*^+$}.
	 \end{cases}
	\end{align*}
\end{theorem}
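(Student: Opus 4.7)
The plan is to use the principal-eigenvalue asymptotic of the paper as the engine driving both halves of the conclusion, entirely bypassing traveling-wave constructions. Passing to the moving frame $y=x-ct$, the linearization of \eqref{cau} at $u\equiv 0$ is governed by
\[
\mathcal{M}^c\phi(y):=d\!\int_{\mathbb{R}}J(y-z)\phi(z)\,dz-d\phi(y)+c\phi'(y)+f'(0)\phi(y),
\]
whose Dirichlet-type principal eigenvalue on $(-l,l)$ equals $\tilde\lambda_l(c)=d\lambda_p(\mathcal{L}^{J,c/d}_{(-l,l)})-d+f'(0)$. The announced asymptotic formula then yields, after setting $\mu=-\nu$,
\[
\tilde\lambda_\infty(c):=\lim_{l\to\infty}\tilde\lambda_l(c)=\inf_{\mu\in\mathbb{R}}\bigl[H(\mu)-c\mu\bigr],\quad H(\mu):=d\!\int_{\mathbb{R}}J(x)e^{\mu x}\,dx-d+f'(0)
\]
and a direct case analysis on the sign of $\mu$ (with the convention \eqref{c*-gen}) establishes the crucial dichotomy $\tilde\lambda_\infty(c)>0\iff c\in(c_*^-,c_*^+)$. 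Everything else flows from this.

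\textbf{Extinction.} For $b_2>c_*^+$, choose $c_0\in(c_*^+,b_2)$ and $\mu>0$ with $H(\mu)<c_0\mu$; such a $\mu$ exists because $c_0>\inf_{\mu>0}H(\mu)/\mu$. The KPP bound $f(u)\le f'(0)u$ and a direct substitution show that $\bar u(t,x):=Me^{-\mu(x-c_0 t)}$ is a supersolution of \eqref{cau}. Picking $M$ large so that $u_0\le\bar u(0,\cdot)$---possible because $\operatorname{supp}u_0$ is compact---the comparison principle gives $u(t,x)\le Me^{-\mu(x-c_0 t)}$, and on $\{x\ge b_2 t\}$ this decays uniformly at rate $Me^{-\mu(b_2-c_0)t}$. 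A mirror construction with $\mu<0$ handles $a_2<c_*^-$.

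\textbf{Spreading.} Fix $[a_1,b_1]\subset(c_*^-,c_*^+)$ and an enlargement $[a_1',b_1']\Subset(c_*^-,c_*^+)$. Continuity of $c\mapsto\tilde\lambda_\infty(c)$ and compactness of $[a_1',b_1']$ produce $\delta>0$ and a fixed $l^*$ with $\tilde\lambda_{l^*}(c)\ge 2\delta$ uniformly on $[a_1',b_1']$. For each such $c$ let $\phi^c>0$ denote the principal eigenfunction of $\mathcal{M}^c_{(-l^*,l^*)}$, normalized by $\|\phi^c\|_\infty=1$. Since $f\in C^1([0,\infty))$ there exists $\sigma>0$ with $f(u)\ge(f'(0)-\delta)u$ for $0\le u\le\sigma$, and a direct calculation then shows that the shifted ansatz $\underline{u}^c(s,x):=\epsilon\phi^c(x-cs)$ (extended by zero outside $(cs-l^*,cs+l^*)$) is a subsolution of \eqref{cau} provided $\epsilon\le\sigma$. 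Choose $t_0$ large and $\epsilon$ small so that $u(t_0,\cdot)\ge\epsilon\phi^c(\cdot)$ on $[-l^*,l^*]$ uniformly for $c\in[a_1',b_1']$; this uses positivity and continuity of $u(t_0,\cdot)$ on a compact interval. Comparison then delivers $u(t_0+s,x)\ge\epsilon\phi^c(x-cs)$ for all $s\ge 0$ and $c\in[a_1',b_1']$. Given $t$ large and $x\in[a_1t,b_1t]$, set $c=x/(t-t_0)$, which lies in $[a_1',b_1']$ for $t$ large; then $u(t,x)\ge\epsilon\phi^c(0)$, and continuity of $c\mapsto\phi^c(0)$ forces $\phi^c(0)\ge\eta>0$ uniformly. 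A standard hair-trigger argument for the KPP equation in the moving frame---exploiting global stability of the steady state $1$ from below---then lifts this uniform positive lower bound to $u(t,x)\to 1$ on $[a_1t,b_1t]$.

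\textbf{Main obstacle.} I expect the hardest step to be producing the uniform-in-$c$ versions of $\tilde\lambda_{l^*}(c)\ge 2\delta$ and of continuous dependence of $\phi^c$ on $c$, without invoking the thin-tail condition. The positivity bound should follow from a Dini-type argument using monotone convergence $\tilde\lambda_l(c)\nearrow\tilde\lambda_\infty(c)$ and joint continuity of $(l,c)\mapsto\tilde\lambda_l(c)$; the continuous dependence of $\phi^c$ must be inherited from the eigenvalue theory developed earlier in the paper. A further subtlety is uniformity of the hair-trigger convergence to $1$, which I would address by compactness, extracting subsequences $(t_n,x_n)$ with $x_n/t_n\to c_\infty\in[a_1,b_1]$ and running the argument in the moving frame attached to the limit speed $c_\infty$.
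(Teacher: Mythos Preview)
Your extinction argument via exponential supersolutions coincides with the paper's. For spreading, your route differs from the paper's and the final uniformity step contains a genuine gap. The paper never uses compactness. It first shows, via Theorem~\ref{c-l}, that for each \emph{fixed} $c_*\in(c_*^-,c_*^+)$ one has $u(t,c_*t+y)\to 1$ locally uniformly in $y$, by comparing with the solution of the bounded-domain problem \eqref{l-0} and using that its unique positive steady state $V_l$ tends to $1$ as $l\to\infty$. To upgrade this to uniformity on $\Omega_t=[a_1t,b_1t]$, the paper applies the pointwise-in-speed result only at the two endpoint speeds $a_1,b_1$, obtaining $u\ge 1-\epsilon$ on a boundary layer $\Omega_t^M\setminus\Omega_t$ of fixed width $M$; it then builds a spatially constant ODE subsolution $v(t)\nearrow 1-\epsilon$ valid on all of $\Omega_t$, where the boundary-layer bound compensates for the convolution mass that leaks outside $\Omega_t$.

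Your proposed compactness argument does not close. If $u(t_n,x_n)\le 1-\epsilon_0$ with $c_n:=x_n/t_n\to c_\infty$, then in the frame moving at speed $c_\infty$ the relevant spatial coordinate is $y_n=(c_n-c_\infty)t_n$, which need not stay bounded; the locally-uniform convergence at the single speed $c_\infty$ therefore says nothing about $u(t_n,x_n)$. Switching instead to the frame at speed $c_n$ requires the hair-trigger convergence to $1$ to be uniform in $c$, which is exactly the conclusion you are after. You also cannot fall back on Arzel\`a--Ascoli for the translates $u(t_n+\cdot,\,x_n+\cdot)$, because nonlocal diffusion does not regularize in $x$. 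A smaller technical point: for $c\neq 0$ the principal eigenfunction $\phi^c$ on $(-l^*,l^*)$ vanishes only at one endpoint (and for $c=0$ at neither), so extending $\epsilon\phi^c(x-cs)$ by zero yields a function that is discontinuous in $x$ and, at fixed $x$, jumps in $s$; this is repairable but must be handled before the comparison principle can be invoked.
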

Clearly this result implies that the leftward spreading speed $c_*^-=-\infty$ if ${\bf (J_{thin}^-)}$ does not hold,  and the rightward spreading speed $c_*^+=\infty$ if ${\bf (J_{thin}^+)}$ does not hold. In fact, with the above extended definitions of  $c_*^-$ and $ c_*^+$, under condition ${\bf (J)}$ alone, we can always conclude from Theorem \ref{th1.3a} that the leftward spreading speed is $c_*^-$ and the rightward spreading speed is $c_*^+$.

The proof of Theorem \ref{th1.3a} will be based on our results about the asymptotic limit of the principal eigenvalue of the following associated nonlocal operator
\begin{align*}
	\mathcal{L}^c_{\Omega}[\phi](x) := d\int_{\Omega}J(x-y)\phi(y)\,dy - d\phi(x) + c\phi'(x) + f'(0)\phi(x),\quad \phi \in C^1(\Omega)\cap C(\overline{\Omega}),
\end{align*}
where $\Omega=(l_1,l_2)$ is a bounded interval.  By \cite{coville20} and \cite{li}, the principal eigenvalue $\lambda_p(\mathcal{L}^c_{\Omega}) $\footnote{It is easy to see from its definition that $\lambda_p(\mathcal{L}^c_{(a, a+l)})$ is independent of $a$. Moreover, since the eigenvalue problem over $(-l, l)$ only involves the value of $J(x)$ for 
$x\in (-2l, 2l)$, the requirement in \cite{coville20} that $J$ has compact support can always be regarded as satisfied.} of $\mathcal{L}^c_{\Omega}$ always exists.

The key result of this paper is the following theorem, which provides an explicit expression in terms of $J(x)$ for the asymptotic limit of $\lambda_p(\mathcal{L}^c_{(-l,l)})$ as $l\to\infty$.

\begin{theorem}\label{th1.2a}
	Assume that the kernel $J$ satisfies ${\bf (J)}$. Then for any $c\in\mathbb R$,
	\begin{equation}\label{l-to-infty}
	\tilde \lambda_\infty^c:=\lim\limits_{l\to \infty}\lambda_p(\mathcal{L}^c_{(-l,l)}) = \inf\limits_{\nu\in\mathbb{R}}\left\{ d\int_{\mathbb{R}} J(x)e^{-\nu x}\,dx - d + c\nu + f'(0) \right\}.
	\end{equation}
	\end{theorem}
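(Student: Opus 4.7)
The plan is to prove separately that $\lim_{l\to\infty}\lambda_p(\mathcal{L}^c_{(-l,l)})$ is both $\le$ and $\ge$ the quantity $\Lambda^* := \inf_{\nu\in\mathbb{R}} \Lambda(\nu)$, where I abbreviate $\Lambda(\nu) := d\int_{\mathbb{R}} J(x) e^{-\nu x}\,dx - d + c\nu + f'(0)$. Because the first equality $\lambda_p(\mathcal{L}^c_{\mathbb{R}}) = \lim_l \lambda_p(\mathcal{L}^c_{(-l,l)})$ in \eqref{l-to-infty} is quoted from \cite{coville20,li}, the task reduces to showing $\lambda_p(\mathcal{L}^c_{\mathbb{R}}) = \Lambda^*$.

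\textbf{Upper bound.} A direct substitution $z = x - y$ in the convolution gives the exact identity $\mathcal{L}^c_{\mathbb{R}}[e^{\nu x}] = \Lambda(\nu) e^{\nu x}$. Restricting the integral from $\mathbb{R}$ to $(-l,l)$ only subtracts a nonnegative quantity, hence $\mathcal{L}^c_{(-l,l)}[e^{\nu x}] \le \Lambda(\nu) e^{\nu x}$ on $(-l,l)$. By the super-solution characterization of $\lambda_p(\mathcal{L}^c_{(-l,l)})$ supplied by \eqref{2.2c}--\eqref{lambdaL2}, the test function $e^{\nu x}$ yields $\lambda_p(\mathcal{L}^c_{(-l,l)}) \le \Lambda(\nu)$ for every admissible $\nu$ and every $l > 0$; optimizing in $\nu$ and sending $l \to \infty$ gives the $\le$ half.

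\textbf{Lower bound.} I argue by contradiction, assuming $\lambda^* := \lambda_p(\mathcal{L}^c_{\mathbb{R}}) < \Lambda^*$. Normalizing and translating the positive principal eigenfunctions $\phi_l$ on $(-l,l)$ by $\tilde\phi_l(x) := \phi_l(x + x_l)/\phi_l(x_l)$ with $\phi_l(x_l) = \max \phi_l$ produces a family taking values in $[0,1]$; the eigenvalue equation provides a uniform $C^1$-bound (via the convolution term when $c = 0$ and via the drift $c\phi'$ otherwise), and a nonlocal Harnack-type estimate exploiting $J(0) > 0$ furnishes uniform positive lower bounds on compact sets. A subsequence then converges locally uniformly to some positive $\phi_\infty \in C^1(\mathbb{R})$ solving $\mathcal{L}^c_{\mathbb{R}}\phi_\infty = \lambda^* \phi_\infty$. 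By translation invariance, $\phi_\infty(\cdot + h)$ is another positive eigenfunction for $\lambda^*$; applying a nonlocal strong maximum principle to $w := \phi_\infty(\cdot + h) - m_h \phi_\infty$ with $m_h := \inf \phi_\infty(\cdot + h)/\phi_\infty$ forces $w \equiv 0$, so that $\phi_\infty(x + h) = m_h \phi_\infty(x)$ for all $x, h \in \mathbb{R}$. The continuous positive cocycle $h \mapsto m_h$ satisfies $m_{h_1 + h_2} = m_{h_1} m_{h_2}$ and hence $m_h = e^{\nu h}$ for some $\nu \in \mathbb{R}$; so $\phi_\infty(x) = \phi_\infty(0) e^{\nu x}$, and substituting back into the eigenvalue equation gives $\lambda^* = \Lambda(\nu) \geq \Lambda^*$, a contradiction.

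\textbf{Main obstacle.} The delicate step is the rigidity argument on the unbounded domain $\mathbb{R}$: because $\phi_\infty$ and its translate may grow or decay at different exponential rates at $\pm\infty$, the ratio $m_h$ may be $0$, $+\infty$, or not attained in the interior, and excluding this requires a careful nonlocal Harnack/sliding argument based on $J(0) > 0$ and the local smoothing inherited from the equation. A more hands-on alternative would construct, for each $\epsilon > 0$ and each $\nu$ close to a minimizer of $\Lambda$, an explicit positive sub-solution of the form $e^{\nu x} \chi_l(x)$ on $(-l,l)$ whose boundary defect $d\int_{\mathbb{R} \setminus (x-l, x+l)} J(z) e^{-\nu z}\,dz$ is absorbed by the cut-off's logarithmic derivative $c \chi_l'/\chi_l$; this route bypasses the Liouville-type reasoning on $\mathbb{R}$ at the cost of sharper cut-off estimates near the endpoints.
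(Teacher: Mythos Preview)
Your upper bound is correct and coincides with the paper's: both plug $\phi=e^{\nu x}$ into the inf--sup characterization on $(-l,l)$.

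Your lower bound is a genuinely different strategy from the paper's, and it has a real gap at exactly the point you flag. The paper does \emph{not} pass to a limiting eigenfunction on $\mathbb{R}$ and then run a Liouville/rigidity argument. Instead it first reduces to compactly supported $J$ (via the monotone approximants $J_n$ of Lemma~\ref{uni}) and then, for such $J$, constructs an explicit piecewise-exponential test function $\varphi$ on $(-l,l)$ (on $(-2l,2l)$ when $c\neq 0$): on successive subintervals $\varphi$ equals $e^{(\nu_0\pm j\delta_0)x}$ with $\nu_0$ the minimizer of $\Lambda$ and $\delta_0=\sigma/k$ small, with a linear cut-off near the outgoing boundary when $c\neq 0$. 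A case-by-case computation then shows $\mathcal{L}^c_{(-l,l)}[\varphi]/\varphi\ge \Lambda^*-\epsilon$ pointwise, and the sup--inf characterization (together with its extension to piecewise-$C^1$ test functions proved in Step~1 of Theorem~\ref{th2.6}) yields the lower bound. This is precisely the ``hands-on alternative'' you mention in your last paragraph; the paper chooses it because it sidesteps the rigidity issue entirely.

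In your route, the step ``apply a strong maximum principle to $w=\phi_\infty(\cdot+h)-m_h\phi_\infty$ to force $w\equiv 0$'' is not justified as written. Since your $\phi_\infty$ is only known to be bounded above (by the normalization $0\le\tilde\phi_l\le 1$), the ratio $\phi_\infty(x+h)/\phi_\infty(x)$ can have infimum $0$ or fail to be attained; without a touching point the strong maximum principle gives nothing. There is also a prior gap: after translating so the maximum of $\phi_l$ sits at the origin, the shifted domain is $(-l-x_l,\,l-x_l)$, and nothing prevents $x_l$ from sitting within $O(1)$ of an endpoint (for $c>0$ one only knows $\phi_l(l)=0$, and $x_l=-l$ is not excluded), so the limit $\phi_\infty$ may live on a half-line rather than on $\mathbb{R}$, which breaks the translation argument. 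The Harnack lower bound you invoke for the nonlocal operator is also not available off the shelf under the bare assumption \textbf{(J)}. Each of these could perhaps be repaired with substantial extra work, but as stated the argument does not close; the paper's constructive test function avoids all three difficulties.
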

	
	Actually it is easy to show (see Lemma \ref{uni}) that $\inf\limits_{\nu\in\mathbb{R}}\left\{ d\int_{\mathbb{R}} J(x)e^{-\nu x}\,dx - d + c\nu + f'(0) \right\}$ is always achieved by some $\nu=\nu_0\in\mathbb R$. 
	
	The link between Theorem \ref{th1.2a} and the propagation dynamics of \eqref{cau} is provided by the following  relationship between $\lambda_\infty$ and $c_*^-, c_*^+$ defined in \eqref{c*+-} (see Proposition \ref{l9.2}):
\[ 
\tilde\lambda_\infty^c\begin{cases}>0& \mbox{ if  $c\in (c_*^-,c_*^+)$},\\
		<0 & \mbox{ if } c\not\in [c_*^-, c_*^+],\\
		=0 & \mbox{ if } c\in \{c_*^-, c_*^+\},
		\end{cases}
		\]
		which follows easily from  \eqref{l-to-infty}.

	To see how the nonlocal operator $\mathcal{L}^c_{(-l,l)}$ arises from \eqref{cau}, let us note that
	if $u$ solves \eqref{cau}, then for any fixed  $c\in \mathbb R$,
		 $V(t,x):=u(t,x+ct)$ satisfies 
		\begin{equation*}
			\left\{
			\begin{array}{ll}
				\displaystyle V_t=d\int_{\mathbb{R}}J(x-y)V(t,y)\,dy-dV+cV_x+f(V), & t>0, \ x\in\mathbb{R}, \\
				V(0,x)=u_0(x),  & x\in\mathbb{R}.
			\end{array}
			\right.
		\end{equation*}
		
		In order to understand the long-time dynamics of $V(t,x)$, one wishes to first understand the corresponding problem over the bounded interval $(-l, l)$, namely, for fixed $c\in\mathbb R$ and  $l>0$,  the problem
		\begin{equation}\label{l-0}
			\left\{
			\begin{array}{ll}
				\displaystyle V_t=d\int_{-l}^lJ(x-y)V(t,y)\,dy-dV+cV_x+f(V), & t>0,\  x\in (-l, l), \\
				V(t,l)=0 \mbox{\ \ \ \ for } t>0, & \mbox{when } c>0,\\
				V(t, -l)=0 \mbox{\, for } t>0, &\mbox{when } c<0,\\
				V(0,x)=V_0(x),  &x\in [-l, l].
			\end{array}
			\right.
		\end{equation}
		Let us note that, in \eqref{l-0}, the boundary condition is assumed at $x=l$ when $c>0$, and it is assumed at $x=-l$ when $c<0$, and no boundary condition is assumed when $c=0$. These might appear strange on first sight, but if one looks at the non-standard proof for the existence of solutions to \eqref{l-0}, they all become rather natural (the non-standard proof follows an approach first used in \cite{cao}).
		
		Since $V_x$ appears in \eqref{l-0}, some extra conditions on the initial function $V_0$ are needed to guarantee that the solution $V(t,x)$ is differentiable in $x$.
		It suffices to assume 
		\[
		V_0\in \mathbb V^c_l,
		\]
		 where $\mathbb V^c_l$ consists of nonnegative functions in $C([-l,l])$ with the following extra properties when $c\not=0$:
		\[ \phi\in C^1([-l,l]), \
 \begin{cases} \phi(l)=0 \mbox{ and } \ \phi'(l)=\displaystyle -\frac{d}c\int_{-l}^lJ(l-y)\phi(y)dy &\mbox{ if } c>0,\\
 \phi(-l)=0 \mbox{ and } \ \phi'(-l)=\displaystyle\frac dc \int_{-l}^l J(-l+y)\phi(y)dy &\mbox{ if } c<0.
 \end{cases}
\]
It can be shown (see Lemma \ref{lem-V_0}) that $\mathbb V_l^c$ is a dense subset of
\[
\mathbb U_l^c:=\{\phi\in C([-l,l]): \phi\geq 0,\; \phi(l)=0 \mbox{ if } c>0,\ \phi(-l)=0 \mbox{ if } c<0\}.
\]

The following result gives a complete description of the long-time dynamics of \eqref{l-0}, which is parallel to the classical result for logistic equations.
		
		\begin{theorem}\label{c-l}
		Suppose that  ${\bf (J)}$ and ${\bf (f_{KPP})}$ hold, and $V_0\in \mathbb V^c_l$  is  not identically $0$. Then \eqref{l-0} has a unique solution $V(t,x)$ and
		\[
		\lim_{t\to\infty} V(t,x)=\begin{cases} 0 &\mbox{ uniformly in $x\in[-l,l]$ if } \lambda_p(\mathcal L^c_{(-l,l)})\leq 0,\\
		V_l(x) &\mbox{ uniformly in $x\in[-l,l]$ if } \lambda_p(\mathcal L^c_{(-l,l)})> 0,
		\end{cases}
		\]
		where $V_l(x)$ is the unique positive stationary solution of \eqref{l-0}. Moreover, when $\tilde\lambda_\infty^c>0$ and hence
		$\lambda_p(\mathcal L^c_{(-l,l)})> 0$ for all large $l>0$, we have
		\[
		\lim_{l\to\infty} V_l(x)=1 \mbox{ uniformly for $x$ in any bounded interval of $\mathbb R$}.
		\]
		\end{theorem}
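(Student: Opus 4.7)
The argument has three stages --- well-posedness of \eqref{l-0}, the dichotomy governed by $\lambda:=\lambda_p(\mathcal L^c_{(-l,l)})$, and the convergence $V_l\to 1$ as $l\to\infty$. For the first, I would follow the characteristic method of \cite{cao}. Writing \eqref{l-0} as $V_t-cV_x=d\!\int_{-l}^l JV\,dy-dV+f(V)$, the characteristics $x(s)=x_0-cs$ run leftward when $c>0$, so the inflow boundary is $x=l$ and the single datum $V(t,l)=0$ is exactly the right amount of data; every $(t,x)\in(0,\infty)\times(-l,l)$ is reached by a unique characteristic originating from either $\{0\}\times[-l,l]$ or $(0,\infty)\times\{l\}$. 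Integrating along characteristics recasts \eqref{l-0} as a fixed-point equation in $C([0,T]\times[-l,l])$, which is a contraction for small $T>0$. The a priori bound $0\le V(t,x)\le\max\{\|V_0\|_\infty,1\}$ (via comparison with constant sub-/super-solutions, using $f(u)\le 0$ for $u\ge 1$) then yields unique global existence; the case $c<0$ is symmetric, and $c=0$ is an ODE in a Banach space.

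\textbf{Dichotomy.} Let $\phi_l>0$ denote the principal eigenfunction of $\mathcal L^c_{(-l,l)}$. When $\lambda<0$, $\overline V(t,x):=M\phi_l(x)e^{\lambda t}$ is a super-solution of \eqref{l-0} (since $\overline V_t=\lambda\overline V=\mathcal L^c[\overline V]$ and the KPP bound $f(u)\le f'(0)u$ converts the linear identity into the required nonlinear inequality), so taking $M$ with $M\phi_l\ge V_0$ gives $V(t,\cdot)\to 0$ exponentially. When $\lambda=0$, $M\phi_l$ remains a stationary super-solution and gives boundedness; decay to $0$ is obtained by ruling out any nontrivial $\omega$-limit stationary solution $V^*$, which is impossible because testing the stationary equation for $V^*$ against the adjoint principal eigenfunction $\phi^*$ gives $0=\lambda\!\int\phi^*V^*=\int\phi^*(f'(0)V^*-f(V^*))>0$ by the strict KPP monotonicity of $f(u)/u$. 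When $\lambda>0$, $\delta\phi_l$ is a stationary sub-solution for small $\delta>0$ and any constant $\ge 1$ is a stationary super-solution, so monotone iteration produces a positive stationary solution $V_l$; uniqueness follows from the standard KPP sweeping argument using the strict decay of $f(u)/u$. Time-convergence $V(t,\cdot)\to V_l$ from a nontrivial nonnegative $V_0$ is then obtained by first showing $V(t_0,\cdot)\ge\delta\phi_l$ for some $t_0,\delta>0$ (via the strong positivity coming from $J(0)>0$) and bracketing $V$ between the monotone-in-$t$ orbits launched from $\delta\phi_l$ (increasing to $V_l$) and from a large constant (decreasing to $V_l$).

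\textbf{The limit $V_l\to 1$.} Assume $\lambda_p(\mathcal L^c_\mathbb R)>0$; then $\lambda_p(\mathcal L^c_{(-l,l)})>0$ for all large $l$ by Theorem~\ref{th1.2a} and the monotonicity of $\lambda_p(\mathcal L^c_{(-l,l)})$ in $l$. For $l_1<l_2$, the restriction of $V_{l_2}$ to $(-l_1,l_1)$ is a super-solution of the stationary problem with integration restricted to $(-l_1,l_1)$ (since $\int_{(-l_2,l_2)}JV_{l_2}\ge\int_{(-l_1,l_1)}JV_{l_2}$), so sub-/super-solution comparison with $V_{l_1}$ gives $V_{l_1}\le V_{l_2}$ on $(-l_1,l_1)$. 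Together with $V_l\le 1$ (constant $1$ is a stationary super-solution), the pointwise monotone limit $V_\infty(x):=\lim_l V_l(x)\in(0,1]$ exists and solves the stationary equation on $\mathbb R$ (pass to the limit using the uniform bound and local equicontinuity supplied by the drift and nonlocal convolution). To show $V_\infty\equiv 1$, fix $L$ so large that $\lambda_p(\mathcal L^c_{(-L,L)})>0$; by translation invariance of the principal eigenvalue (the footnote in the excerpt), $\phi_L(\cdot-x_0)$ is the principal eigenfunction on $(x_0-L,x_0+L)$, and $\delta\phi_L(\cdot-x_0)$ is a stationary sub-solution there for some $\delta>0$ chosen independently of $x_0$. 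Applying the same truncation comparison to $V_l$ on $(x_0-L,x_0+L)\subset(-l,l)$ yields $V_l(x_0)\ge\delta\phi_L(0)$ for every admissible $x_0,l$, so $\inf_\mathbb R V_\infty\ge\delta\phi_L(0)>0$. Extracting a translation limit $V^*$ of $V_\infty$ along a sequence realising the infimum $m\in(0,1]$, the stationary equation at the minimum point $0$ (where $V^*(0)=m$ and $(V^*)'(0)=0$) gives $d(\int JV^*-m)+f(m)=0$ with $\int JV^*\ge m$, forcing $f(m)\le 0$; since $m>0$, KPP yields $m\ge 1$, hence $V_\infty\equiv 1$. Local uniform convergence $V_l\to 1$ finally follows from Dini's theorem applied to the monotone sequence of continuous functions.

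\textbf{Main obstacle.} The hardest step is upgrading the qualitative positivity of $V_\infty$ to the uniform lower bound $\inf_\mathbb R V_\infty>0$; without this the translation-extremum argument could not rule out a bounded stationary solution with $\liminf=0$. The translation invariance of $\lambda_p(\mathcal L^c_{(a,a+L)})$ in $a$ is what makes this step work, supplying a sub-solution $\delta\phi_L(\cdot-x_0)$ whose amplitude $\delta$ is independent of the base point $x_0$.
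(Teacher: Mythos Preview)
Your overall strategy matches the paper's: well-posedness via the characteristic/parametrized-ODE method of \cite{cao}, the dichotomy via sub/super-solution comparison with multiples of the principal eigenfunction, and $V_l\to 1$ by monotonicity and comparison. Two technical points need attention. First, the ordering $M\phi_l\ge V_0$ can fail at $x=l$ (for $c>0$), since $\phi_l(l)=0$ while the hypothesis only requires $V_0\in C([-l,l])$ with no compatibility at the inflow boundary; the paper fixes this by shifting the comparison to start at $t=1$, where $V(1,\cdot)\in C^1([-l,l])$ with $V(1,l)=0$ and $\phi_l'(l)<0$ permit a Hopf-type ordering $V(1,\cdot)\le M\psi$ for large $M$. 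You already invoke this time-shift for the lower bound in the $\lambda>0$ case; it is needed for the upper bound as well. Second, your $\lambda=0$ argument via the adjoint eigenfunction $\phi^*$ presupposes that $(\mathcal L^c_{(-l,l)})^*$ (kernel $J(-\cdot)$, drift $-c$, boundary condition $\phi^*(-l)=0$) has principal eigenvalue $0$ with a positive eigenfunction; this is true but is not stated in \cite{coville20,li}. The paper instead treats $\lambda\le 0$ uniformly by a direct sliding argument: set $v_\delta=\psi-\delta V^*$, let $\delta_0$ be the supremum of $\delta$ keeping $v_\delta>0$ on $[-l,l)$, and derive a contradiction at a touching point or at $x=l$ via the sign of $v_{\delta_0}'$. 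This avoids the adjoint entirely.

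Your treatment of $V_l\to 1$ is more explicit than the paper's, which simply defers to Proposition~3.6 of \cite{cao}; the monotonicity-in-$l$ plus translation-minimum argument you give is a valid alternative. One caveat: the compactness you need to extract the translation limit $V^*$ with $(V^*)'(0)=0$ is immediate when $c\ne 0$ (the stationary equation gives a uniform $C^1$ bound on $V_l$), but for $c=0$ the equation is algebraic in $V_l$ and equicontinuity must be obtained differently, e.g.\ from the uniform continuity of $x\mapsto\int J(x-y)V_\infty(y)\,dy$ together with local invertibility of $v\mapsto dv-f(v)$ on the relevant range.
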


Theorem \ref{th1.3a} will follow from an application of Theorem \ref{c-l} and some other  arguments. We would like to mention that traditionally the spreading speed is determined by using traveling wave solutions (as in \cite{AW, AW1978} for \eqref{cauchy}), or by a dynamical systems approach (first introduced in \cite{wein1982} and further developed by  \cite{LZ} and others). In contrast, our proof of Theorem \ref{th1.3a} relies mainly on the properties of the principal eigenvalue $\lambda_p(\mathcal L^c_{(-l,l)})$ and their implications for the dynamics of \eqref{l-0}. 

Theorem \ref{th1.2a} will also find  applications in the understanding of the propagation dynamics of the associated free boundary problem of \eqref{cau}, which will be considered in a separate work. In another separate work, we will extend the results here to the corresponding problems in high space dimensions.

We would like to draw the reader's attention to \cite{BHN, BN} and the references therein for works on asymptotic spreading  of very general local diffusion models in heterogeneous media, where the principal eigenvalues also play a crucial role for the propagation dynamics. In \cite{DJ}, the  spreading speed of a related asymmetric nonlocal diffusion model in time-dependent media was studied.

If we define 
\begin{align*}
\mathcal{L}^{J, c}_{\Omega}[\phi](x):=\int_{\Omega}J(x-y)\phi(y)\,dy+c\phi'(x),
\end{align*}
then clearly
\[
\mathcal L^c_{\Omega}[\phi]=d\,\mathcal L^{J,c/d}_\Omega[\phi]+[f'(0)-d]\phi
\]
and
\[
\lambda_p(\mathcal L^c_{\Omega})=d\,\lambda_p(\mathcal L^{J,c/d}_\Omega)+f'(0)-d.
\]
Therefore \eqref{l-to-infty} is equivalent to
\begin{equation}\label{l-to-infty-c}
\lim\limits_{l\to \infty}\lambda_p(\mathcal{L}^{J, c}_{(-l,l)})=\inf\limits_{\nu\in\mathbb{R}}\big[\int_{\mathbb{R}}J(x)e^{-\nu x}\,dx+c\nu\big].
\end{equation}

 In Section 2, the quantity $\displaystyle \inf\limits_{\nu\in\mathbb{R}}\big[\int_{\mathbb{R}}J(x)e^{-\nu x}\,dx+c\nu\big]$ will be examined and shown to be achieved at some $\nu=\nu_0$ (depending on both $J$ and $c$). To prove \eqref{l-to-infty-c}, it turns out that the cases $c=0$ and $c\not=0$ have to be considered separately. The case $c=0$ will be treated in Section 3, and the case $c\not=0$ in Section 4. Our proof of Theorem \ref{th1.2a} in Sections 3 and 4 rely on results in  \cite{coville20, li} and the construction of a special test function used in a sup-inf characterization of the principal eigenvalue.  Section 5 is devoted to the proof of Theorems \ref{c-l} and \ref{th1.3a}.

\section{On  the quantity $\displaystyle \inf_{\nu \in \mathbb{R}} \big[\int_{\mathbb{R}} J(x)e^{-\nu x}\,dx+c\nu\big]$}

 We give some basic properties of  $\displaystyle \inf_{\nu \in \mathbb{R}} \big[\int_{\mathbb{R}} J(x)e^{-\nu x}\,dx+c\nu\big]$ here, which will be used later in the paper.
\begin{lemma}\label{uni}
	Let \( J \) satisfy condition \({\bf (J)}\). Then the following conclusions hold:
	\begin{enumerate}
		\item[{\rm (1)}]  
		There exists a sequence of compactly supported nonnegative continuous functions \(\{J_n\}\)  such that for every \(n \geq 1\) and \(x \in \mathbb{R}\),
		\[
		J_n(0)>0,\  J_n(x) \leq J_{n+1}(x) \leq J(x), \quad \lim_{n \to \infty} J_n(x) = J(x) \quad \text{locally uniformly in } \mathbb{R}.
		\]
       \item[{\rm (2)}] 
		For any $c\in\mathbb R$, there exists a unique \(\nu_0=\nu_0^c\in\mathbb R\) such that
		\[
		\int_{\mathbb{R}} J(x)e^{-\nu_0 x}\,dx +c\nu_0= \inf_{\nu \in \mathbb{R}} \Big[\int_{\mathbb{R}} J(x)e^{-\nu x}\,dx+c\nu\Big]. 
		\]

	\item[{\rm (3)}] If $J_n(x)$ is given in {\rm (1)} and 
	\[
	\int_{\mathbb{R}}J_n(x)e^{-\nu_n x}\,dx+c\nu_n=\inf\limits_{\nu\in \mathbb{R}}\Big[\int_{\mathbb{R}}J_n(x)e^{-\nu x}\,dx+c\nu\Big],
	\]
	 then,   as $n\to\infty$,
	 	 \[
	  \nu_n\to \nu_0 \mbox{ and } \int_{\mathbb{R}}J_n(x)e^{-\nu_n x}\,dx+c\nu_n\to \inf_{\nu \in \mathbb{R}} \Big[\int_{\mathbb{R}} J(x)e^{-\nu x}\,dx+c\nu\Big].
	 \]
	\end{enumerate}
\end{lemma}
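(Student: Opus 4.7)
The plan is to treat the three parts in order, with (2) and (3) both hinging on convex-analytic properties of the map
\[
g(\nu):=\int_{\mathbb R}J(x)e^{-\nu x}\,dx+c\nu\in(-\infty,+\infty]
\]
and of its truncated analogues $g_n(\nu):=\int_{\mathbb R}J_n(x)e^{-\nu x}\,dx+c\nu$. For (1) I would simply take a continuous cutoff: pick $\chi_n\in C(\mathbb R)$ with $0\le\chi_n\le\chi_{n+1}\le 1$, $\chi_n\equiv 1$ on $[-n,n]$ and $\chi_n\equiv 0$ outside $[-n-1,n+1]$, and set $J_n:=\chi_n J$. Then each $J_n$ is continuous, nonnegative, compactly supported, and satisfies $J_n(0)=J(0)>0$, while the monotonicity $J_n\le J_{n+1}\le J$ and local uniform convergence $J_n\to J$ follow from the properties of $\chi_n$ together with the continuity of $J$.

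For (2) I would establish four structural facts about $g$: convexity on $\mathbb R$ (since $\nu\mapsto e^{-\nu x}$ is convex and $J\ge 0$); lower semicontinuity by Fatou's lemma; strict convexity on the interior of the effective domain $\{g<\infty\}$, because $J$ is continuous with $\int J=1$ so the open set $\{J>0\}$ meets $\mathbb R\setminus\{0\}$, where $\nu\mapsto e^{-\nu x}$ is \emph{strictly} convex; and global coercivity $g(\nu)\to+\infty$ as $|\nu|\to\infty$. The coercivity I would prove by using $J(0)>0$ and continuity to fix $\eta,\delta>0$ with $J\ge\delta$ on $[-\eta,\eta]$, whence
\[
g(\nu)\ge \delta\int_{-\eta}^{\eta}e^{-\nu x}\,dx+c\nu=\frac{2\delta\sinh(\nu\eta)}{\nu}+c\nu,
\]
and the $\sinh$ term grows exponentially in $|\nu|$ in both directions, dominating the linear perturbation $c\nu$. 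Since $g(0)=1<\infty$, coercivity and lower semicontinuity deliver a minimizer, and strict convexity forces it to be unique; this is the $\nu_0=\nu_0^c$ of the statement.

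For (3) the key observation is that the bound above survives uniformly in $n$: one fixes $\eta,\delta$ from $J_1$, and then $J_n\ge J_1\ge\delta$ on $[-\eta,\eta]$ for every $n\ge 1$. Since each $J_n$ is compactly supported, $g_n$ is everywhere finite, strictly convex and coercive, so it has a unique minimizer $\nu_n$; moreover $g_n(\nu_n)\le g_n(0)=\int J_n\le 1$, so the uniform coercivity forces $\{\nu_n\}$ to lie in a bounded set. Monotone convergence yields $g_n\uparrow g$ pointwise, hence $g_n(\nu_n)\le g_n(\nu_0)\to g(\nu_0)=\inf g$. Along any subsequence with $\nu_{n_k}\to\nu^*$, Fatou's lemma applied to the nonnegative integrands $J_{n_k}(x)e^{-\nu_{n_k}x}\to J(x)e^{-\nu^*x}$ gives
\[
g(\nu^*)\le\liminf_{k\to\infty}g_{n_k}(\nu_{n_k})\le\inf g,
\]
so $\nu^*$ is a minimizer of $g$ and must equal $\nu_0$ by (2). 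Every subsequential limit being $\nu_0$ forces the full sequence to converge, and $g_n(\nu_n)\to g(\nu_0)$ then follows at once.

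The main obstacle I anticipate is the regime where neither thin-tail condition on $J$ holds, so that $g$ can equal $+\infty$ on large subsets of $\mathbb R$ and neither the existence of $\nu_0$ nor the boundedness of $\{\nu_n\}$ is a priori clear. The coercivity estimate via $\sinh$ resolves this, because it asks only for the local positivity $J(0)>0$ rather than any integrability of $J$ at infinity, and it transfers uniformly from $J$ to every $J_n$ through the monotonicity built into part (1).
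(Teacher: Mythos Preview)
Your proof is correct. For part (1) both you and the paper use the same cutoff construction $J_n=\chi_n J$, and for part (3) the arguments coincide: uniform coercivity from the local lower bound on $J_1$ gives boundedness of $\{\nu_n\}$, then Fatou on subsequences plus the uniqueness from (2) forces $\nu_n\to\nu_0$ and $g_n(\nu_n)\to g(\nu_0)$.

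The genuine difference is in part (2). You argue directly on $g$: convexity, lower semicontinuity via Fatou, strict convexity from the fact that $\{J>0\}$ meets $\mathbb R\setminus\{0\}$, and coercivity from the $\sinh$ bound. This is clean convex analysis and keeps (2) logically independent of the approximating sequence. The paper instead \emph{builds} $\nu_0$ out of the $\nu_n$: it first shows each $\eta_n$ is smooth, strictly convex and coercive (so $\nu_n$ exists), then proves $\{\nu_n\}$ is bounded, extracts a subsequential limit $\nu_0$, and checks via Fatou and monotonicity that this $\nu_0$ realizes the infimum for $J$; uniqueness is obtained by a quantitative second-derivative lower bound $\eta_n''\ge\sigma_0\delta^3$, which yields $\eta_n(\nu_*)-\eta_n(\nu_n)\ge \tfrac{1}{2}\sigma_0\delta^3(\nu_*-\nu_n)^2$ and hence a contradiction in the limit. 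So the paper intertwines (2) and (3), working only with the everywhere-finite smooth functions $\eta_n$, while you decouple them at the cost of handling the possibly $+\infty$-valued function $g$. Your route is shorter and more conceptual; the paper's route never has to worry about the effective domain of $g$ or argue lower semicontinuity, since every function it differentiates is finite and smooth. One small wording point: your strict convexity holds on the whole effective domain (any two points where $g$ is finite), not merely its interior, and that is what you actually use for uniqueness.
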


\begin{proof}
(1) Choose a smooth function $\xi_0(x)$ such that
\[
0\leq \xi_0(x)\leq 1,\ \xi_0(x)=1 \mbox{ for } x\in [-1,1],\ \xi_0(x)=0 \mbox{ for } |x|>2.
\]
	Then  $J_n(x):=J(x)\xi_0(x/n)$ has the desired properties.
	
	(2) Since each $J_n$ has compact support, it is easily seen that $\eta_n(\nu):=\int_{\mathbb{R}}J_n(x)e^{-\nu x}\,dx+c\nu$ is a smooth function of $\nu$ over $\mathbb R$, with
	\[
	\eta_n''(\nu)=\int_{\mathbb{R}} J_n(x)x^2 e^{-\nu x}\,dx> 0 \mbox{ for all  $\nu\in\mathbb R$.}
	\]
	Since $J_n(0)>0$ there exists $\delta_n\in (0, 1/2)$ such that $J_n(x)\geq  \sigma_n>0$ for $x\in [-2\delta_n,2\delta_n]$. Therefore, 
	\[
	\eta_n(\nu)\geq  \int_{-2\delta_n}^{-\delta_n}J_n(x)e^{-\nu x}dx+  \int_{\delta_n}^{2\delta_n}J_n(x)e^{-\nu x}dx+c\nu>\sigma_n\delta_n e^{\delta_n|\nu|}+c\nu\to \infty \mbox{ as } |\nu|\to \infty.
	\]
	It follows that there exists a unique  $\nu_n$ such that 
	\begin{equation}\label{nu_n}
	\eta_n(\nu_n)=\inf_{\nu\in \mathbb{R}}\eta_n(\nu).
	\end{equation}

We claim that \(\{ \nu_n \}\) is a bounded sequence. Otherwise, by passing to a subsequence we may assume that $|\nu_n|\to\infty$ as $n\to\infty$.

	Since $J(0)>0$ there exists $\delta\in (0, 1/2)$ such that $J_n(x)\geq \frac 12 J(x)\geq \sigma_0>0$ for $x\in [-2\delta,2\delta]$ and all large $n$. Therefore, for such $n$,
	\begin{equation}\label{eta''}
	\eta_n(\nu)\geq  \int_{-2\delta}^{-\delta}J_n(x)e^{-\nu x}dx+  \int_{\delta}^{2\delta}J_n(x)e^{-\nu x}dx+c\nu>\sigma_0\delta e^{\delta|\nu|}+c\nu,
	\end{equation}
	 and then
\[
\eta_n(\nu_n)\geq  \sigma_0\delta e^{\delta|\nu_n|}+c\nu_n\to\infty
\mbox{ as } n\to\infty,
\]
which is a contradiction to
\[
\eta_n(\nu_n)\leq \eta_n(0)\leq 1.
\]
Therefore there exists a subsequence, still denoted by \(\nu_n\), such that   $\nu_n\to \nu_0$ as $n\to\infty$.
	By Fatou's lemma we obtain
	$$ \int_{\mathbb{R}}J(x)e^{-\nu_0 x}\,dx+c\nu_0\leq\liminf\limits_{n\to\infty}\Big[\int_{\mathbb{R}}J_n(x)e^{-\nu_n x}\,dx+c\nu_n\Big]\leq 1+c\nu_0.$$
	On the other hand, 
	\begin{align*}
		\int_{\mathbb{R}}J_n(x)e^{-\nu_n x}\,dx+c\nu_n&= \inf\limits_{\nu\in \mathbb{R}}\Big[\int_{\mathbb{R}}J_n(x)e^{-\nu x}\,dx+c\nu\Big]\\
		&\leq \inf\limits_{\nu\in \mathbb{R}}\Big[\int_{\mathbb{R}}J(x)e^{-\nu x}\,dx+c\nu]\leq \int_{\mathbb{R}}J(x)e^{-\nu_0 x}\,dx+c\nu_0.
	\end{align*}
	Therefore we must have
	\begin{align*}
		\inf\limits_{\nu\in \mathbb{R}}\Big[\int_{\mathbb{R}}J(x)e^{-\nu x}\,dx+c\nu\Big]=\int_{\mathbb{R}}J(x)e^{-\nu_0 x}\,dx+c\nu_0.
	\end{align*}
	The existence of $\nu_0$ is now proved.

We  next show the uniqueness of $\nu_0$ satisfying 
 \( \eta(\nu_0) 
 = \inf_{\nu \in \mathbb{R}} \eta(\nu) \). If there exists \( \nu_* \neq \nu_0 \) such that \( \eta(\nu_*) = \eta(\nu_0) \), then we are going to deduce a contradiction.  Let $\nu_n$ be given by \eqref{nu_n}  with $\nu_n\to \nu_0$  as $n\to\infty$.   Similar to \eqref{eta''} there exists $m> 1$ such that
 \begin{equation}\label{eta_n''}
 \eta_n''(\nu)\geq  \int_{-2\delta}^{-\delta}J_n(x)x^2e^{-\nu x}dx+  \int_{\delta}^{2\delta}J_n(x)x^2e^{-\nu x}dx>\sigma_0\delta^3 e^{\delta|\nu|}\geq  \sigma_0\delta^3 \mbox{ for all } n\geq m,\ \nu\in\mathbb R.
 \end{equation}
 So in the case \( \nu_* > \nu_0 \),
it follows that $\eta_n'(\nu_n)=0$, $\eta_n'(\nu)\geq \sigma_0\delta^3 (\nu-\nu_n)$ for $\nu>\nu_n$ and $n\geq m$. Therefore, for all large $n$, 
 \[
 \eta_n(\nu_*)-\eta_n(\nu_n)\geq \sigma_0\delta^3(\nu_*-\nu_n)^2/2.
 \]
 Letting $n\to\infty$ we deduce 
 \[
 0=\eta(\nu_*)-\eta(\nu_0)\geq \sigma_0\delta^3(\nu_*-\nu_0)^2/2 >0.
 \]
 The case $\nu^*<\nu_0$ similarly leads to a contradiction.
 This  proves the uniqueness of $\nu_0$, and
   the proof for (2) is complete.
	
(3) The conclusions follow directly from the uniqueness of $\nu_0$ and the argument in (2).
\end{proof}

\begin{corollary}\label{coro1} Suppose that $J$ satisfies ${\bf (J)}$.  Then the following conclusions hold:

\begin{itemize}
\item[{\rm (1)}]  If neither \({\bf (J_{thin}^+)}\) nor \({\bf (J_{thin}^-)}\) is satisfied, then  
\begin{align}\label{9}
 \lambda^c_{\infty}:=\inf\limits_{\nu\in \mathbb{R}}\Big[\int_{\mathbb{R}}J(x)e^{-\nu x}\,dx+c\nu\Big]=\int_{\mathbb R} J(x)dx
= 1.
\end{align}
\item[{\rm (2)}] If  $\int_{\mathbb{R}} J(x) x \,dx=0$ in the sense that for some sequence $\{J_n\}$ satisfying (1) of Lemma \ref{uni},
\[
\lim_{n\to\infty}\int_{\mathbb R} J_n(x)xdx=0,
\]
then  \eqref{9} holds for $c=0$.
\end{itemize}
\end{corollary}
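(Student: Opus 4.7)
The plan is to analyze $\eta(\nu) := \int_{\mathbb{R}} J(x)e^{-\nu x}\,dx + c\nu$ directly in each case, using Lemma \ref{uni}(2) which guarantees that the infimum is attained at some finite $\nu_0$.

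For part (1), I would split $\int_{\mathbb{R}} J(x)e^{-\nu x}\,dx = \int_{-\infty}^{0} + \int_{0}^{\infty}$ and exhaust the cases according to the sign of $\nu$. For any $\nu > 0$, the first piece equals $\int_{-\infty}^{0} J(x) e^{\nu|x|}\,dx$, which diverges to $+\infty$ because ${\bf (J_{thin}^-)}$ fails; for any $\nu < 0$, the second piece equals $\int_{0}^{\infty} J(x) e^{|\nu| x}\,dx$, which diverges to $+\infty$ because ${\bf (J_{thin}^+)}$ fails. Consequently $\eta(\nu) = +\infty$ for every $\nu \neq 0$, while $\eta(0) = \int_{\mathbb{R}} J(x)\,dx = 1$. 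In view of Lemma \ref{uni}(2), this forces $\nu_0 = 0$ and $\lambda^c_\infty = 1$.

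For part (2), with $c = 0$, the plan is to exploit Jensen's inequality along the approximating sequence $\{J_n\}$ supplied by Lemma \ref{uni}(1). Set $a_n := \int_{\mathbb{R}} J_n\,dx$ (so $a_n \nearrow 1$ by monotone convergence) and $\bar{x}_n := \int_{\mathbb{R}} J_n(x)\, x\,dx$. Since $J_n$ is compactly supported and $J_n/a_n$ is a probability density, applying Jensen's inequality to the convex function $t \mapsto e^{-\nu t}$ gives
\[
\int_{\mathbb{R}} J_n(x) e^{-\nu x}\,dx \;\geq\; a_n \exp\!\bigl(-\nu\, \bar{x}_n/a_n\bigr).
\]
Letting $n \to \infty$, Fatou's lemma on the left together with $a_n \to 1$ and $\bar{x}_n \to 0$ (the hypothesis) on the right yields $\int_{\mathbb{R}} J(x) e^{-\nu x}\,dx \geq 1$ for every $\nu \in \mathbb{R}$, with equality at $\nu = 0$. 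Hence $\lambda^0_\infty = 1$.

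The only mildly delicate point is the passage to the limit in part (2): Fatou's lemma is essential because $\int_{\mathbb{R}} J(x) e^{-\nu x}\,dx$ may well be $+\infty$ when the thin-tail conditions fail, in which case the inequality trivially holds. Otherwise, both parts reduce to a direct unpacking of the definitions plus an application of Jensen's inequality, with no additional technical input required beyond Lemma \ref{uni}.
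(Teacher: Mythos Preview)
Your proof of part (1) is correct and essentially identical to the paper's.

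For part (2), your approach via Jensen's inequality is correct and genuinely different from the paper's. The paper argues as follows: writing $G_n(\nu) = \int_{\mathbb{R}} J_n(x) e^{-\nu x}\,dx$, it uses a uniform lower bound $G_n''(\nu) \geq \sigma > 0$ (coming from $J_n(0) > 0$) together with $G_n'(0) = -\bar{x}_n \to 0$ to show that the unique minimizer $\nu_n$ of $G_n$ satisfies $|\nu_n| \leq |\bar{x}_n|/\sigma \to 0$; Lemma~\ref{uni}(3) then forces $\nu_0 = \lim \nu_n = 0$, whence the infimum of $\int J e^{-\nu x}\,dx$ is attained at $\nu = 0$. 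Your Jensen argument bypasses the location of the minimizers entirely and delivers the pointwise lower bound $\int J e^{-\nu x}\,dx \geq 1$ for every $\nu$, which immediately gives the result. This is shorter and more direct; the paper's route, on the other hand, fits naturally into the machinery already set up in Lemma~\ref{uni}.

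One small slip in the limiting step: you invoke Fatou's lemma, but Fatou gives $\int J e^{-\nu x}\,dx \leq \liminf_n \int J_n e^{-\nu x}\,dx$, which points the wrong way for your chain. What you actually need is simply $J_n \leq J$ (from Lemma~\ref{uni}(1)), so that
\[
\int_{\mathbb{R}} J(x) e^{-\nu x}\,dx \;\geq\; \int_{\mathbb{R}} J_n(x) e^{-\nu x}\,dx \;\geq\; a_n \exp\!\bigl(-\nu\,\bar{x}_n/a_n\bigr) \;\longrightarrow\; 1;
\]
equivalently, monotone convergence gives $\int J_n e^{-\nu x}\,dx \to \int J e^{-\nu x}\,dx$ directly. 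This is a cosmetic fix; the argument is sound.
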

\begin{proof} (1) Since neither \({\bf (J_{thin}^+)}\) nor \({\bf (J_{thin}^-)}\) is satisfied, 
$\int_{\mathbb{R}}J(x)e^{-\nu x}\,dx=\infty$ for $\nu\not=0$. Hence \eqref{9} holds.

(2) With $J_n$  satisfying (1) of Lemma \ref{uni} and
$
\lim_{n\to\infty}\int_{\mathbb R} J_n(x)xdx=0,
$ we see that for every $n\geq 1$,
 $G_n(\nu):=\int_{\mathbb{R}}J_n(x)e^{-\nu x}\,dx$ is a smooth function over $\mathbb R$, and similar to \eqref{eta_n''},  there exists $\delta\in (0, 1/2)$, $\sigma>0$ and integer $m>1$ such that
 \[
 G_n''(\nu)=\int_{\mathbb{R}}J_n(x)x^2e^{-\nu x}\,dx\geq \sigma e^{\delta|\nu|}\geq \sigma \mbox{ for all } \nu\in\mathbb R,\ n\geq m.
 \]
 By assumption,
 \[
 \epsilon_n:= G_n'(0)=\int_{\mathbb{R}} J_n(x) x \,dx\to 0 \mbox{ as } n\to\infty.
 \]
 Hence we have, for $n\geq m$,
 \begin{equation}\label{Gn'}
 |G_n'(\nu)-\epsilon_n|=|\int_0^\nu G_n''(\xi)d\xi |\geq \sigma |\nu|  \mbox{ for all } \nu\in\mathbb R.
  \end{equation}
By Lemma \ref{uni}, there exists a unique $\nu_n$ such that 
\[
G_n(\nu_n)=\min_{\nu\in\mathbb R} G_n(\nu),\ G_n'(\nu_n)=0,\ \nu_n\to \nu_0 \mbox{ as } n\to\infty,
\]
and
\[
\inf\limits_{\nu\in \mathbb{R}}\int_{\mathbb{R}}J(x)e^{-\nu x}\,dx=\int_{\mathbb{R}}J(x)e^{-\nu_0 x}\,dx.
\]
 
  Therefore to complete the proof it suffices to show $\nu_0= 0$. Taking $\nu=\nu_n$ in  \eqref{Gn'} we obtain
  \[
   \sigma|\nu_n|\leq \epsilon_n\to 0 \mbox{ as } n\to\infty.
     \]
  Therefore $\nu_0=0$ and the proof is complete.
  \end{proof}
  
  For any symmetric kernel function $J$, we can find $J_n$ symmetric satisfying (1) of Lemma \ref{uni}, and hence $\int_{\mathbb{R}} J(x) x \,dx=0$ and we can apply the above result to conclude that \eqref{9} holds for $c=0$.

\section{On the principal eigenvalue of $\mathcal L^{J, 0}_\Omega$}\label{subsec}
To simplify notations, we will denote  $\mathcal L^{J, 0}_\Omega$ by $\mathcal{L}_{\Omega}^J$, where $\Omega$ is a bounded open interval in $\mathbb{R}$  and $J$ satisfies ${\bf (J)}$. Denote
\begin{align}\label{2.2c}
\lambda_p(\mathcal{L}_{\Omega}^J):=\inf\left\{\lambda\in \mathbb{R}:\mathcal{L}_{\Omega}^J[\phi]\leq \lambda\phi\,\,  \text{in $\Omega$ for some}\, \phi\in C(\overline{\Omega}),\,\phi>0\right\}.
\end{align}
Then by \cite{li},   $\lambda_p(\mathcal{L}_{\Omega}^J)$\footnote{Note that
$-\lambda_p(\mathcal{L}_{\Omega}^J):=\sup\left\{\mu\in \mathbb{R}:\mathcal{L}_{\Omega}^J[\phi] +\mu \phi\leq 0\,\,  \text{for some}\, \phi\in C(\overline{\Omega}),\phi>0\right\}.$} is an eigenvalue of the operator $\mathcal{L}_{\Omega}^J$ with a  positive eigenfunction in $C(\bar\Omega)$, and $\lambda_p(\mathcal{L}_{\Omega}^J)$ is known as the principal eigenvalue of $\mathcal{L}_\Omega^J$. 

In the following, we focus on the case of $\Omega=(a, a+l)$ with $a\in\mathbb R$ and $l\in (0,\infty)$.

\begin{lemma}\label{l2.1}
	Assume that the kernel $J$ satisfies ${\bf (J)}$. Then the following statements hold true:
	\begin{itemize}
		\item [{\rm (i)}] $\lambda_p(\mathcal{L}^J_{(a, a+l)})$ is independent of $a$ and strictly increasing and continuous in $l$;
		\item [{\rm (ii)}] $\lim\limits_{l\to 0}\lambda_p(\mathcal{L}^J_{(a, a+l)})=0$.
	\end{itemize}
\end{lemma}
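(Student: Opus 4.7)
\textbf{Proof plan for Lemma~\ref{l2.1}.}

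The independence of $a$ follows at once from the substitution $\tilde\phi(x):=\phi(x-a)$, which bijects positive elements of $C([0,l])$ with those of $C([a,a+l])$ and preserves the pointwise inequality $\mathcal{L}^J_\Omega[\phi]\le\lambda\phi$ because the kernel $J(x-y)$ depends only on the difference $x-y$. The elementary bound $\mathcal{L}^J_{(0,l)}[1](x)=\int_0^l J(x-y)\,dy\le \|J\|_\infty l$ then yields, via the infimum characterization \eqref{2.2c}, $0\le \lambda_p(\mathcal{L}^J_{(0,l)})\le \|J\|_\infty l$, which tends to $0$ as $l\to 0$ and proves~(ii).

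For monotonicity in~(i), given $0<l_1<l_2$, I take the principal eigenfunction $\phi_2>0$ of $\mathcal{L}^J_{(0,l_2)}$ (granted by \cite{li}) and observe that its restriction $\psi:=\phi_2|_{[0,l_1]}>0$ satisfies
\begin{equation*}
\mathcal{L}^J_{(0,l_1)}[\psi](x)=\lambda_p(\mathcal{L}^J_{(0,l_2)})\,\psi(x)-g(x),\qquad g(x):=\int_{l_1}^{l_2}J(x-y)\phi_2(y)\,dy\ge 0,
\end{equation*}
which gives $\lambda_p(\mathcal{L}^J_{(0,l_1)})\le \lambda_p(\mathcal{L}^J_{(0,l_2)})$ directly from \eqref{2.2c}. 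Since $J(0)>0$ and $J$ is continuous, $g$ is strictly positive for $x$ slightly less than $l_1$, so $g\not\equiv 0$. To upgrade to \emph{strict} monotonicity, I will pair the identity with the positive principal eigenfunction $\phi^*$ of the adjoint operator $\mathcal{L}^{\tilde J}_{(0,l_1)}$, where $\tilde J(x):=J(-x)$ (applying \cite{li} to the reflected kernel produces $\phi^*$, while the Krein--Rutman theorem forces the principal eigenvalue of the adjoint to coincide with $\lambda_p(\mathcal{L}^J_{(0,l_1)})$). Fubini converts $\int_0^{l_1}\phi^*\,\mathcal{L}^J_{(0,l_1)}[\psi]\,dx$ into $\lambda_p(\mathcal{L}^J_{(0,l_1)})\int_0^{l_1}\phi^*\psi\,dx$, so the assumption $\lambda_p(\mathcal{L}^J_{(0,l_1)})=\lambda_p(\mathcal{L}^J_{(0,l_2)})$ would force $\int_0^{l_1}\phi^* g\,dx=0$, contradicting $\phi^*>0$ and $g\ge 0$, $g\not\equiv 0$.

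For continuity of $l\mapsto\lambda_p(\mathcal{L}^J_{(0,l)})$ at fixed $l>0$, let $l_n\to l$; monotonicity yields $\lambda_n:=\lambda_p(\mathcal{L}^J_{(0,l_n)})\to\lambda^*$ with $\lambda^*\ge \lambda_p(\mathcal{L}^J_{(0,l/2)})>0$ for large $n$. Normalizing the eigenfunctions by $\int_0^{l_n}\phi_n\,dy=1$, the identity $\phi_n(x)=\lambda_n^{-1}\int_0^{l_n}J(x-y)\phi_n(y)\,dy$ combined with the uniform lower bound on $\lambda_n$ yields $\|\phi_n\|_\infty\le \|J\|_\infty/\lambda_p(\mathcal{L}^J_{(0,l/2)})$, and the uniform continuity of $J$ on compacta produces equicontinuity of the continuously extended family on $[0,l+1]$. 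Arzel\`a--Ascoli then yields a subsequential uniform limit $\phi\ge 0$ on $[0,l]$, and because $\big|\int_0^{l_n}\phi_n\,dy-\int_0^l\phi_n\,dy\big|\le \|\phi_n\|_\infty|l_n-l|\to 0$, the normalization passes to $\int_0^l\phi\,dy=1$, so $\phi\not\equiv 0$. Passing to the limit in the eigenvalue equation gives $\mathcal{L}^J_{(0,l)}[\phi]=\lambda^*\phi$, and a standard nonlocal positivity argument (if $\phi(x_0)=0$, the integral identity and $J>0$ near $0$ force $\phi\equiv 0$ on a neighborhood of $x_0$, which propagates by connectedness of $[0,l]$) delivers $\phi>0$, whence $\lambda^*=\lambda_p(\mathcal{L}^J_{(0,l)})$.

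The main obstacle is the strict monotonicity step: the natural domain-comparison only produces a non-strict inequality, and bridging the gap requires a nonlocal strong-maximum-principle-type device, for which the adjoint-pairing above is the cleanest one I know. Everything else rests on compactness, the positivity of $J$ near the origin, and routine bookkeeping with the moving endpoint in the continuity step, which the $L^1$-normalization of the eigenfunctions handles uniformly in the one-sided cases.
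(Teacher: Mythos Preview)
Your argument is correct, and the strict monotonicity step takes a genuinely different route from the paper. The paper argues by a touching/sliding device: assuming $\lambda_p(\mathcal{L}^J_{(0,l)})=\lambda_p(\mathcal{L}^J_{(0,\hat l)})$ with $l<\hat l$, it sets $\delta_0:=\sup\{\delta>0:\hat\phi\ge\delta\phi\text{ on }[0,l]\}$ for the two principal eigenfunctions $\phi,\hat\phi$, and derives a contradiction in both the cases $\hat\phi\equiv\delta_0\phi$ and $\hat\phi\not\equiv\delta_0\phi$ on $[0,l]$ by exploiting the extra mass $\int_l^{\hat l}J(x-y)\hat\phi(y)\,dy$ and the positivity $J(0)>0$. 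Your adjoint-pairing argument is cleaner and more linear-algebraic: it converts strict inequality into the single scalar identity $(\lambda_2-\lambda_1)\int\phi^*\psi=\int\phi^* g$, from which strictness is immediate once $g\not\equiv0$. The cost is that you must invoke the adjoint principal eigenpair (via $\tilde J(x)=J(-x)$ and Krein--Rutman), whereas the paper's touching argument stays entirely on the primal side and avoids any spectral theory for the adjoint; on the other hand, the touching argument requires a case split and a strong-maximum-principle-type propagation. For~(ii) and the continuity in~(i), the paper simply defers to Proposition~3.4 of \cite{cao}, while you give self-contained proofs via the test function $\phi\equiv1$ and an Arzel\`a--Ascoli compactness argument; your treatment is more explicit and avoids the external reference. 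One small point of presentation: your phrase ``monotonicity yields $\lambda_n\to\lambda^*$'' for a general sequence $l_n\to l$ is slightly loose, since $\lambda_n$ need not be monotone; but your subsequent argument actually identifies every subsequential limit with $\lambda_p(\mathcal{L}^J_{(0,l)})$, which is what is needed.
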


\begin{proof}
	It is easily checked from the definition  that $\lambda_p(\mathcal{L}^J_{(a, a+l)})$ is independent of $a$. So it suffices to prove the stated conclusions for $\lambda_p(\mathcal{L}^J_{(0, l)})$.
	
	The continuity of the eigenvalue and conclusion (ii) can be established using a similar approach to Proposition 3.4-(i),(iii) in \cite{cao}, and the proof is therefore omitted. Hence, we focus on proving the monotonicity properties of the eigenvalue with respect to the length of the domain.

Suppose $\hat{l}>l$. To simplify  notations, we denote  $\lambda_l:=\lambda_p(\mathcal{L}^J_{(0,l)})$ and $\lambda_{\hat l}:=\lambda_p(\mathcal{L}^J_{(0,\hat{l})})$. From the definition of $\lambda_p(\mathcal L_\Omega^J)$ we easily see that $\lambda_l\leq \lambda_{\hat l}$. We need to show the strict inequality: $\lambda_l< \lambda_{\hat l}$.
	
	Arguing by contradiction, we assume that $\lambda_l= \lambda_{\hat l}$. Let $(\lambda_l,\phi)$ and $(\lambda_{\hat l},\hat{\phi})$ be the principal eigenpairs of the nonlocal operators $\mathcal{L}^J_{(0,l)}$ and $\mathcal{L}^J_{(0,\hat l)}$, respectively. Then using $\lambda_l=\lambda_{\hat l}$ we obtain
	\begin{equation}\label{ev1}
		\lambda_l\hat{\phi}(x)=\int_0^{\hat{l}}J(x-y)\hat{\phi}(y)\,dy,\quad x\in[0,\hat{l}]
	\end{equation}
	and
	\begin{equation}\label{ev2}
		\lambda_l\phi(x)=\int_0^{l}J(x-y)\phi(y)\,dy,\quad x\in[0,l].
	\end{equation}
	
	Since both $\phi$ and $\hat\phi$ are positive and continuous over $[0, l]$,
	\[
	\delta_0:=\sup\{\delta>0: \hat \phi(x)\geq \delta \phi(x) \mbox{ for } x\in [0, l]\}
	\]
	is a well defined positive number, and $\hat \phi(x)\geq \delta_0\phi(x) $ in $[0, l]$. 
	
	The proof of $\lambda_p< \hat{\lambda}_p$ will be carried out according to the following two cases.
	
	\underline{\bf Case 1}.  $\hat{\phi}(x)\equiv \delta_0 \phi(x)$ in $[0,l]$.
	
	Then by \eqref{ev1} and $J(0)>0$ we deduce
	$$\lambda_l\phi(x)=\int_0^{l}J(x-y)\phi(y)\,dy+\frac{1}{\delta_0}\int_l^{\hat{l}}J(x-y)\hat{\phi}(y)\,dy>\int_0^{l}J(x-y)\phi(y)\,dy,$$
	for $x<l$ sufficiently  close to $l$, which is a contradiction to \eqref{ev2}.
	
	\underline{\bf Case 2}.  $\hat{\phi}(x)\not\equiv \delta_0 \phi(x)$ in $ [0,l]$.
	
	Then $\{x\in [0,l]:\hat{\phi}(x)-\delta_0 \phi(x)=0\}$ is a non-empty proper subset of $[0, l]$ which contains at least one point $x_0\in \partial\{x\in [0,l]:\hat{\phi}(x)-\delta_0 \phi(x)> 0\}$. Using \eqref{ev1}, \eqref{ev2} and $J(0)>0$, we easily obtain, for $v(x):=\hat \phi(x)-\delta_0\phi(x)$,
	$$0=\lambda_l v(x_0)\geq\int_0^{l}J(x_0-y)v(y)\,dy>0,$$
so we also arrive at a  contradiction.   The proof is complete.	
\end{proof}

\begin{theorem}\label{l2.2}
Assume that  $J$ satisfies ${\bf (J)}$. Then
\begin{align*}
\lim\limits_{l\to \infty}\lambda_p(\mathcal{L}^J_{(-l,l)})=\inf\limits_{\nu\in \mathbb{R}}\int_{\mathbb{R}}J(x)e^{-\nu x}\,dx.
\end{align*}
\end{theorem}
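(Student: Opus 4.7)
The plan is to sandwich the limit (which exists by Lemma \ref{l2.1}, since $\lambda_p(\mathcal L^J_{(-l,l)})$ is strictly increasing in $l$) between $\lambda_\infty := \inf_\nu \int_{\mathbb R} J(x)e^{-\nu x}\,dx$ from both sides. For the \emph{upper bound}, I would test with the exponential $\phi(x) = e^{\nu_0 x}$, where $\nu_0$ is the unique minimizer from Lemma \ref{uni}(2). The substitution $z = x-y$ gives
\[
\mathcal L^J_{(-l,l)}[e^{\nu_0\cdot}](x) = e^{\nu_0 x}\int_{x-l}^{x+l}J(z)e^{-\nu_0 z}\,dz \le \lambda_\infty e^{\nu_0 x},
\]
so the definition \eqref{2.2c} yields $\lambda_p(\mathcal L^J_{(-l,l)}) \le \lambda_\infty$ for every $l > 0$.

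For the matching \emph{lower bound}, my plan is to first reduce to compactly supported kernels using the sequence $J_n \uparrow J$ from Lemma \ref{uni}(1). The pointwise bound $J_n \le J$ yields $\mathcal L^{J_n}[\phi] \le \mathcal L^J[\phi]$ for nonnegative $\phi$, so any super-solution for $\mathcal L^J$ is automatically a super-solution for $\mathcal L^{J_n}$, giving $\lambda_p(\mathcal L^{J_n}_{(-l,l)}) \le \lambda_p(\mathcal L^J_{(-l,l)})$. Combined with Lemma \ref{uni}(3), which supplies $\lambda_\infty^{J_n}\to \lambda_\infty$, it suffices to establish the equality for each compactly supported $J_n$, which I denote simply by $J$ with support in $[-R,R]$. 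I would then perform the multiplicative conjugation $\phi = e^{\nu_0 x}\tilde\phi$ with the transformed (still compactly supported) kernel $\tilde J(z) := J(z)e^{-\nu_0 z}$: this preserves the principal eigenvalue, gives $\int\tilde J = \lambda_\infty$, and, from the first-order optimality of $\nu_0$, forces the mean-zero condition $\int z\tilde J(z)\,dz = 0$. The remaining task is to produce, for each $\epsilon > 0$ and all large $l$, a positive sub-solution $\tilde\phi_l \in C([-l,l])$ with $\mathcal L^{\tilde J}_{(-l,l)}[\tilde\phi_l] \ge (\lambda_\infty-\epsilon)\tilde\phi_l$ on $(-l,l)$; the sup-inf characterization of the principal eigenvalue (the dual form of \eqref{2.2c} recorded in the footnote and supplied by \cite{coville20, li}) then yields $\lambda_p(\mathcal L^{\tilde J}_{(-l,l)}) \ge \lambda_\infty - \epsilon$, and letting $l\to\infty$, $n\to\infty$ and then $\epsilon\to 0$ completes the proof.

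A natural candidate sub-solution is a rescaled smooth bump $\tilde\phi_l(x) = \psi(x/L)$ for a fixed positive $C^2$ profile $\psi$ and $L$ slightly larger than $l$. In the interior $|x| \le l - R$, a Taylor expansion of the convolution, using $\int\tilde J = \lambda_\infty$ and the mean-zero condition, gives
\[
\mathcal L^{\tilde J}_{(-l,l)}[\tilde\phi_l](x) = \lambda_\infty \tilde\phi_l(x) + \tfrac{1}{2L^2}\psi''(x/L)\int z^2\tilde J(z)\,dz + O(L^{-3}),
\]
so that the ratio exceeds $\lambda_\infty - C/l^2$ there. The \emph{main obstacle} lies in the boundary layer $|x| \in [l-R, l]$, where the one-sided truncation of $\tilde J$ removes a fixed positive fraction of its mass that does not vanish as $l\to\infty$. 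Consequently, a naive constant or cosine test function leaves a boundary loss of order $1$ in the ratio, and the ``special test function'' alluded to in the introduction must be arranged so that the truncated kernel mass near $\pm l$ multiplies correspondingly small values of $\tilde\phi_l$, with the mean-zero property of $\tilde J$ ensuring that the asymmetric losses at the two boundaries can be simultaneously balanced. Designing this boundary correction uniformly in $l$ is the technical heart of the argument and the point I expect to demand the most care.
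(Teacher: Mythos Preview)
Your upper bound and your reduction to compactly supported kernels via $J_n\uparrow J$ match the paper exactly, and the multiplicative conjugation $\phi=e^{\nu_0 x}\tilde\phi$ is a clean simplification the paper does not make (the paper works directly with $J$ and $\nu_0$).

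The genuine gap is precisely where you flag it: the boundary layer $|x|\in[l-R,l]$. A rescaled bump $\psi(x/L)$ cannot serve as the test function there, even after conjugation. The difficulty is that the truncation removes an $O(1)$ fraction of the kernel mass, so to keep the ratio $\ge\lambda_\infty-\epsilon$ you need $\tilde\phi_l$ to decay toward the boundary at an $O(1)$ exponential rate, not the $O(1/l)$ rate a scaling $x\mapsto x/L$ provides. The mean-zero property of $\tilde J$ helps in the interior (killing the first-order Taylor term) but does nothing to repair the one-sided mass loss at the boundary.

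The paper's resolution is a specific piecewise-exponential test function, not a bump. In the conjugated variables your reduction sets up, it would read: take $\tilde\phi_l\equiv 1$ on a central interval, and on successive sub-intervals toward each boundary multiply in exponential factors with slopes $\pm\delta_0,\pm 2\delta_0,\dots,\pm k\delta_0=\pm\sigma$. The outer slope $\sigma$ is chosen so large that even the \emph{one-sided} integral $\int_0^R \tilde J(y)e^{\sigma y}\,dy$ already exceeds $\lambda_\infty$, which handles the truncated endpoints; the step size $\delta_0=\sigma/k$ is then chosen small enough that the mismatch across each slope-change (an error of size $(1-e^{-\delta_0 R})\cdot O(1)$) is below $\epsilon$. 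The sup--inf characterization is then applied with this piecewise function. So your outline is sound, but the ``boundary correction'' you leave open is the entire substance of the lower bound; the construction needed is qualitatively different from the scaled profile you propose.
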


\begin{proof}
Denote $\lambda_l:=\lambda_p(\mathcal{L}^J_{(0,2l)})=\lambda_p(\mathcal L^J_{(-l,l)})$. We already know that $\lambda_l$ is continuous and strictly increasing in $l$. 

 {\bf Step 1}. We show that $\lim\limits_{l\to \infty}\lambda_l\leq \displaystyle\inf\limits_{\nu\in \mathbb{R}}\int_{\mathbb{R}}J(x)e^{-\nu x}\,dx\leq 1$.

From the definition of $\lambda_p(\mathcal L_{(-l, l)}^J)$ and the existence of  principal eigenfunctions, we easily obtain 
\begin{align*}\label{6a}
\lambda_l=\inf_{\phi\in C([-l,l]),\, \phi>0\,\text{on}\, [-l,l]}\ \sup_{x\in(-l,l)}\frac{\mathcal{L}^J_{(-l,l)}[\phi](x)}{\phi(x)}.
\end{align*}
 Hence, for any $\nu\in \mathbb{R}$,
$$\lambda_l\leq \sup_{x\in(-l,l)}\frac{\int_{-l}^lJ(x-y)e^{\nu y}\,dy}{e^{\nu x}}=\sup_{x\in(-l,l)}\int_{x-l}^{x+l}J(y)e^{-\nu y}\,dy\leq \int_{\mathbb{R}}J(y)e^{-\nu y}\,dy,$$
which clearly implies the desired inequality.
\medskip

{\bf Step 2}. We prove  $\lim\limits_{l\to \infty}\lambda_l\geq  \inf\limits_{\nu\in \mathbb{R}}\int_{\mathbb{R}}J(x)e^{-\nu x}\,dx$ when $J$ is compactly support.

By Lemma \ref{uni} there exists $\nu_0\in \mathbb{R}$ such that
 \begin{equation}\label{infi}
  \lambda_{\infty}:=\inf\limits_{\nu\in \mathbb{R}}\int_{\mathbb{R}}J(x)e^{-\nu x}\,dx=\int_{\mathbb{R}}J(x)e^{-\nu_0 x}\,dx.
\end{equation}

Fix   some constant $M>0$ such that $\text{supp}{\bf (J)}\subset (-M,M)$. Since $J(0)>0$, we can find a large positive number $\sigma$ such that 
\begin{equation}\label{largeen}
\mu_\sigma:=\min\Big\{\int_0^M J(y)e^{-(\nu_0-\sigma)y}\,dy,\ \int_{-M}^0 J(y)e^{-(\nu_0+\sigma)y}\,dy\Big\}>1\geq \lambda_{\infty}.
\end{equation}

For any given $\epsilon>0$, there exists a large positive integer $k$ (depending on $\epsilon$) such that
\begin{equation}\label{les}
0<(1-e^{-\frac\sigma k M})\mu_\sigma <\epsilon.
\end{equation}

For large $l$ satisfying $M<\frac 14 \frac{l}{k+1}$, and $j=0, 1,2,..., k$, define
\[
\delta_0:=\frac\sigma k,\ I_j^+:=\Big[\frac{j}{k+1}l,\ \frac{j+1}{k+1}l\Big],\ I_j^-:=\Big[-\frac{j+1}{k+1}l,\ -\frac{j}{k+1}l\Big],
\]
and
\begin{equation*}
\varphi(x):=\left\{
\begin{array}{ll}
\displaystyle    e^{(\nu_0-j\delta_0)x+\delta_0\Sigma_{i=1}^j \frac i{k+1}l}=e^{\nu_0 x+j\delta_0(\frac{j+1}2 \frac{l}{k+1}-x)}, & x\in I^+_j,\ j=1,..., k,\medskip\\
    e^{\nu_0 x}, & x\in I_0^+\cup I_0^-, \\
   \displaystyle  e^{(\nu_0+j\delta_0)x+\delta_0\Sigma_{i=1}^j \frac i{k+1}l}=e^{\nu_0 x+j\delta_0(\frac{j+1}2 \frac{l}{k+1}+x)}, & x\in I^-_j,\ j=1,..., k.
  \end{array}
\right.
\end{equation*}
Obviously, $\varphi$ is  continuous  and piecewise smooth on the interval $[-l,l]$. Using Theorem 2.4 in \cite{li} with $k(x,y)=-J(x-y)$ we obtain
\begin{align}\label{6a}
\lambda_l=\sup_{\phi\in C([-l,l]),\, \phi>0\,\text{on}\, [-l,l]}\ \inf_{x\in(-l,l)}\frac{\mathcal{L}^J_{(-l,l)}[\phi](x)}{\phi(x)}.
\end{align}
Therefore
\[
 \lambda_l\geq \inf\limits_{x\in(-l,l)}e_l(x) \mbox{ with }
e_l(x):=\frac{\mathcal{L}^J_{(-l,l)}[\varphi](x)}{\varphi(x)}=\frac{\displaystyle\int_{x-l}^{x+l}J(y)\varphi(x-y)\,dy}{\varphi(x)}.
\]
Hence, to show the desired estimate   it suffices to show that for any $\epsilon>0$, there exists large $l>0$ such that
\begin{align}\label{belowesti}
\inf\limits_{x\in(-l,l)}e_l(x)>\lambda_{\infty}-\epsilon.
\end{align}

Next we estimate $e_l(x)$ according to the location of $x$ in $ [-l, l]$. Firstly we notice that  
\[
[x-l, x+l]\supset [-M, M] \mbox{ for $x\in [-l+M, l-M]$.}
\]

\underline{\bf Case 1}.  $0\leq x\leq \frac{l}{k+1}-M$.

For $x$ in this range we have 
\[
\mbox{$-\frac{l}{k+1}\leq x-M<x+M\leq \frac l{k+1}$.}
\]
 Therefore
$$e_l(x)=\frac{\int_{-M}^{M}J(y)\varphi(x-y)\,dy}{\varphi(x)}=\int_{-M}^{M}J(y)e^{-\nu_0y}\,dy=\lambda_{\infty}.$$

\underline{\bf Case 2}.  $\frac{l}{k+1}-M< x\leq \frac{l}{k+1}$.

For $x$ in this range by  the definition of $\varphi$,  \eqref{infi} and \eqref{les}, we obtain 
\begin{align*}
e_l(x) &=\frac{\int_{x-l}^{x+l}J(y)\varphi(x-y)\,dy}{\varphi(x)}=\frac{\int_{-M}^{M}J(y)\varphi(x-y)\,dy}{\varphi(x)}\\
&=\int_{x-\frac{l}{k+1}}^{M}J(y)e^{-\nu_0y}\,dy+e^{\delta_0(\frac{l}{k+1}-x)}\int_{-M}^{x-\frac{l}{k+1}}J(y)e^{-(\nu_0-\delta_0)y}\,dy\\
&\geq \int_{x-\frac{l}{k+1}}^{M}J(y)e^{-\nu_0y}\,dy+\int_{-M}^{x-\frac{l}{k+1}}J(y)e^{-(\nu_0-\delta_0)y}\,dy\\
&=\int_{-M}^{M}J(y)e^{-\nu_0y}\,dy+\int_{-M}^{x-\frac{l}{k+1}}J(y)e^{-\nu_0y}(e^{\delta_0y}-1)\,dy\\
&\geq \lambda_{\infty}-\int_{-M}^{0}J(y)e^{-\nu_0 y}(1-e^{\delta_0 y})\,dy\\
&\geq\lambda_{\infty}-(1-e^{-\delta_0 M})\int_{-M}^{0}J(y)e^{-\nu_0 y}\,dy\\
&>\lambda_{\infty}-\epsilon,
 \end{align*}
where we have used the fact that  $\int_{-M}^{0}J(y)e^{-\nu_0 y}\,dy<\lambda_{\infty}\leq 1$.

\underline{\bf Case 3}.  $\frac{jl}{k+1}< x\leq \frac{jl}{k+1}+M$, $1\leq j\leq k$.

By the definition of $\varphi$, we obtain
\begin{align*}
	e_l(x) &=\frac{\int_{x-l}^{x+l}J(y)\varphi(x-y)\,dy}{\varphi(x)}=\frac{\int_{-M}^{M}J(y)\varphi(x-y)\,dy}{\varphi(x)}\\
	&=\frac{\int_{x-\frac{jl}{k+1}}^{M}J(y)\varphi \Big|_{I_{j-1}^+}(x-y)\,dy+\int_{-M}^{x-\frac{jl}{k+1}}J(y)\varphi \Big|_{I_{j}^+}(x-y)\,dy}{\varphi\Big|_{I_j^+}(x)}\\
	&=e^{\delta_0(x-\frac{jl}{k+1})}\int_{x-\frac{jl}{k+1}}^{M}J(y)e^{-(\nu_0-j\delta_0+\delta_0)y}\,dy+\int_{-M}^{x-\frac{jl}{k+1}}J(y)e^{-(\nu_0-j\delta_0)y}\,dy\\
	&\geq \int_{x-\frac{jl}{k+1}}^{M}J(y)e^{-(\nu_0-j\delta_0+\delta_0)y}\,dy +\int_{-M}^{x-\frac{jl}{k+1}}J(y)e^{-(\nu_0-j\delta_0)y}\,dy\\
	&=\int_{-M}^{M}J(y)e^{-(\nu_0-j\delta_0)y}\,dy-\int_{x-\frac{jl}{k+1}}^{M}J(y)e^{-(\nu_0-j\delta_0)y}(1-e^{-\delta_0y})\,dy\\
	&\geq \int_{-M}^{M}J(y)e^{-(\nu_0-j\delta_0)y}\,dy-(1-e^{-\delta_0M})\int_{0}^{M}J(y)e^{-(\nu_0-k\delta_0)y}\,dy\\
	&> \lambda_{\infty}-\epsilon \mbox{ by the minimality property of $\nu_0$ and \eqref{les}.}
\end{align*}

\underline{\bf Case 4}.  $\frac{jl}{k+1}+M< x\leq \frac{(j+1)l}{k+1}-M$, $1\leq j\leq k$.

Making use of the definition of $\varphi$ and \eqref{infi}, we have
$$e_l(x)=\frac{\int_{-M}^{M}J(y)\varphi \Big|_{I_j^+}(x-y)\,dy}{\varphi \Big|_{I_j^+}(x)}
=\int_{-M}^{M}J(y)e^{-(\nu_0-j\delta_0)y}\,dy>\lambda_{\infty}.$$

\underline{\bf Case 5}.   $\frac{(j+1)l}{k+1}-M< x\leq \frac{(j+1)l}{k+1}$, $1\leq j\leq k-1$.

Using  the definition of $\varphi$ and \eqref{infi}, \eqref{les}, we deduce
\begin{align*}
e_l(x)
&=\int_{x-\frac{(j+1)l}{k+1}}^{M}J(y)e^{-(\nu_0-j\delta_0)y}\,dy+e^{\delta_0(\frac{(j+1)l}{k+1}-x)}\int_{-M}^{x-\frac{(j+1)l}{k+1}}J(y)e^{-(\nu_0-j\delta_0-\delta_0)y}\,dy\\
&\geq \int_{x-\frac{(j+1)l}{k+1}}^{M}J(y)e^{-(\nu_0-j\delta_0)y}\,dy+\int_{-M}^{x-\frac{(j+1)l}{k+1}}J(y)e^{-(\nu_0-j\delta_0-\delta_0)y}\,dy\\
&=\int_{-M}^{M}J(y)e^{-(\nu_0-j\delta_0)y}\,dy+\int_{-M}^{x-\frac{(j+1)l}{k+1}}J(y)e^{-(\nu_0-j\delta_0)y}(e^{\delta_0y}-1)\,dy\\
&>\lambda_{\infty}-(1-e^{-\delta_0M})\int_{-M}^{0}J(y)e^{-\nu_0y}\,dy\\
&>\lambda_{\infty}-\epsilon,
 \end{align*}
 where we have used again the inequality $\int_{-M}^{0}J(y)e^{-\nu_0 y}\,dy<\lambda_{\infty}\leq 1$.

\underline{\bf Case 6}.   $x\in (l-M,l]$.

 Using \eqref{largeen}, we obtain
\begin{align*}
e_l(x) &=\frac{\int_{x-l}^{M}J(y)\varphi(x-y)\,dy}{\varphi(x)}=\frac{\int_{x-l}^{M}J(y)\varphi\Big|_{I_k^+}(x-y)\,dy}{\varphi\Big|_{I_k^+}(x)}\\ 
&=\int_{x-l}^{M}J(y)e^{-(\nu_0-k\delta_0)y}\,dy\geq\int_{0}^{M}J(y)e^{-(\nu_0-k\delta_0)y}\,dy >\lambda_{\infty}.
 \end{align*}

To sum up, we have proved that $e_l(x)>\lambda_{\infty}-\epsilon$ for $x\in [0,l]$ and all large $l$. Similarly, we can show $e_l(x)>\lambda_{\infty}-\epsilon$ for $x\in [-l,0]$ and all large $l$. Thus we have proved the validity of \eqref{belowesti},
and Step 2 is finished.\medskip

{\bf Step 3}. We prove  $\lim\limits_{l\to \infty}\lambda_l\geq  \inf\limits_{\nu\in \mathbb{R}}\int_{\mathbb{R}}J(x)e^{-\nu x}\,dx$ for a general kernel function $J$ satisfying ${\bf (J)}$, without the  compact support assumption.

Let $J_n$ be given as in Lemma \ref{uni}. Then
by \eqref{6a},
\begin{align*}
	\lambda_l=&\sup\limits_{\phi\in C([-l,l]),\, \phi>0\,\text{on}\, [-l,l]}\inf\limits_{x\in(-l,l)}\frac{\int_{-l}^{l}J(x-y)\phi(y)\,dy}{\phi(x)}\\
	\geq& \sup\limits_{\phi\in C([-l,l]),\, \phi>0\,\text{on}\, [-l,l]}\inf\limits_{x\in(-l,l)}\frac{\int_{-l}^{l}J_n(x-y)\phi(y)\,dy}{\phi(x)}=:\lambda_l^n.
\end{align*}
Since $J_n$ has compact support, by the conclusions in Step 2 above we have
\begin{align*}
	\lambda_\infty^n:=\lim_{l\to\infty}\lambda_l^n=\inf\limits_{\nu\in \mathbb{R}}\int_{\mathbb{R}}J_n(x)e^{-\nu x}\,dx=\int_{\mathbb{R}}J_n(x)e^{-\nu_n x}\,dx.
\end{align*}
It follows that
\begin{align}\label{limi}
		\lim_{l\to\infty}\lambda_l\geq \lambda_\infty^n\ \ \ {\rm for\ all}\ n\geq 1.
\end{align}

By part (3) of Lemma \ref{uni}, $\lambda_\infty^n\to \lambda_\infty$ as $n\to\infty$. Hence \eqref{limi} implies
$$\lim_{l\to\infty}\lambda_l\geq \lambda_\infty= \inf\limits_{\nu\in \mathbb{R}}\int_{\mathbb{R}}J(x)e^{-\nu x}\,dx.$$
This finished Step 3.
The desired conclusion clearly follows directly from Step 1 and Step 3.
The proof is now complete.
\end{proof}
\medskip

For a given function $J$ satisfying ${\bf (J)}$, it is in general difficult to check whether $\inf\limits_{\nu\in \mathbb{R}}\int_{\mathbb{R}}J(x)e^{-\nu x}\,dx<1.$
The following example describes a situation that the asymmetry of $J$  may decrease the value of $\inf\limits_{\nu\in \mathbb{R}}\int_{\mathbb{R}}J(x)e^{-\nu x}\,dx.$

{\bf Example 1.} Let $J_a(x)=\xi_{[a-1,a]}(x)$ be the characteristic function of the interval
 \([a-1, a]\) with \(0.5 \leq a < 1\), and 
 \[
  G_a(\nu):=\int_{\mathbb{R}}J_a(x)e^{-\nu x}\,dx.
 \]
  A simple calculation gives
		\begin{align*}
		\lambda_a:=\inf\limits_{\nu\in \mathbb{R}}G_a(\nu)=\inf_{\nu\in\mathbb{R}}\frac{e^\nu-1}{\nu}e^{-\nu a} ,
	\end{align*}
	and it is easily seen that
both $G_a(\nu)$ and $	\lambda_a=\inf\limits_{\nu\in \mathbb{R}}G_a(\nu) $ are  decreasing  with  respect to $ a$ in $[0.5, 1)$.
	Moreover, 
	\begin{align*}
	\lambda_{0.5}=1, \lim_{a\to 1}\lambda_a=0.
	\end{align*}
Although $J_a$ is not continuous, but if $J$ satisfies ${\bf (J)}$ and is close to $J_a$ then $\int_{\mathbb R} J(x) e^{-\nu x}dx$
behaves similarly to $G_a(\nu)$.
\medskip

{\bf Example 2}.  In the opposite direction to Example 1 above, it is easy to find asymmetric functions $J$ satisfying ${\bf (J)}$ as well as $\int_{\mathbb{R}} J(x) x \,dx=0$.
Then by Corollary \ref{coro1}  we have $\inf_{\nu\in\mathbb R}\int_{\mathbb R} J(x) e^{-\nu x}dx=1$.

\section{On the principal eigenvalue of $\mathcal{L}^{J,c}_\Omega$}\label{sec2.2}

Suppose that $\Omega=(l_1,l_2)$ is a bounded open interval in $\mathbb{R}$,  and  $J$ satisfies ${\bf (J)}$.
For $c\in \mathbb{R}\setminus\{0\}$  we define
 \begin{equation*}
X_{\Omega}:=\begin{cases}
\{\phi\in C^1(\Omega)\cap C(\overline{\Omega}): \phi>0\ \text{in}\ \Omega \}& \text{if}\ \Omega=(l_1,l_2),\\
	\{\phi\in C^1(\mathbb{R})\cap L^\infty (\mathbb{R}): \phi>0\ \text{in}\ \mathbb{R}\}& \text{if}\  \Omega=\mathbb{R},\\
\end{cases}
\end{equation*}
\begin{equation*}
X^c_{\Omega}:=\begin{cases}
\{\phi\in X_\Omega:  \phi(l_2)=0\}& \text{if}\ c>0,\ \Omega=(l_1,l_2),\\
	\{\phi\in X_\Omega:  \phi(l_1)=0\}& \text{if}\ c<0,\ \Omega=(l_1,l_2),\\
\end{cases}
\end{equation*}
and
\begin{align}\label{lambdaL2}
\lambda_p(\mathcal{L}^{J, c}_{\Omega}):=&\inf\{\lambda\in \mathbb{R}:\mathcal{L}^{J, c}_{\Omega}[\phi]\leq \lambda\phi\ \text{in $\Omega$ for some}\  \phi\in X_{\Omega}\}.
\end{align}
Then by \cite[Theorems 1.2 and 1.4]{coville20},   $\lambda_p(\mathcal{L}^{J,c}_{\Omega})$\footnote{To compare with the definition in \cite{coville20}, let us note that \[-\lambda_p(\mathcal{L}^{J, c}_{\Omega})=\sup\{\mu \in \mathbb{R}:\mathcal{L}^{J, c}_{\Omega}[\phi]+ \mu\phi\leq 0\ \text{in $\Omega$ for some}\  \phi\in X_{\Omega}\}.\]} is an eigenvalue of the operator $\mathcal{L}^{J,c}_{\Omega}$ with a continuous and positive eigenfunction, and  it is referred to as the principal eigenvalue of $\mathcal L^{J,c}_\Omega$. So there exists $\phi=\phi^{J,c}_{\Omega}\in X_{\Omega}$  such that 
\[
\mathcal{L}^{J, c}_{\Omega}[\phi](x)=\lambda_p(\mathcal{L}^{J, c}_{\Omega}) \phi(x) \mbox{ for $x\in \Omega$.}
\]
Moreover, when
 $\Omega=(l_1,l_2)$ is a bounded interval, by \cite[Theorem 4.3]{li} and \cite[Theorem 1.2]{coville20},
\begin{align}\label{minmax}
\phi^{J,c}_{\Omega}\in X^c_{\Omega} \mbox{ and } \lambda_p(\mathcal{L}^{J, c}_{\Omega})=\inf\limits_{\phi\in X^c_{\Omega}}\sup\limits_{x\in \Omega}\frac{\mathcal{L}^{J, c}_{\Omega}[\phi]}{\phi}=\sup\limits_{\phi\in X^c_{\Omega}}\inf\limits_{x\in \Omega}\frac{\mathcal{L}^{J, c}_{\Omega}[\phi]}{\phi}.
\end{align}

From the definition of $\lambda_p(\mathcal{L}^{J, c}_{\Omega})$,  Proposition 1.3 and Theorem 1.4 in \cite{coville20}, we have the following conclusions.

\begin{proposition}\label{l2.3}
	Assume that the kernel $J$ satisfies ${\bf (J)}$ and $c\not=0$. Then 
		 $\lambda_p(\mathcal{L}^{J, c}_{(a,a+l)})$ is independent of $a\in\mathbb R$ and increasing and continuous in $l\in (0,\infty)$.
\end{proposition}

Denote
$$ \lambda^c_{\infty}:=\inf\limits_{\nu\in\mathbb{R}}\left[\int_{\mathbb{R}}J(x)e^{-\nu x}\,dx+c\nu\right].$$
Our next result shows that $ \lambda^c_{\infty}=\lim\limits_{l\to \infty}\lambda_p(\mathcal{L}^{J, c}_{(-l,l)})$, which appears to be the first explicit expression of this limit.
\begin{theorem}\label{th2.6}
	Assume that the kernel $J$ satisfies ${\bf (J)}$ and $c\not=0$. Then 
	\[
	\lim\limits_{l\to \infty}\lambda_p(\mathcal{L}^{J, c}_{(-l,l)})= \lambda^c_{\infty}.
	\]
\end{theorem}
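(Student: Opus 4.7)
The first equality $\lambda_p(\mathcal{L}^{J,c}_{\mathbb{R}})=\lim_{l\to\infty}\lambda_p(\mathcal{L}^{J,c}_{(-l,l)})$ is already supplied by Proposition \ref{l2.3}(ii), so the plan is to establish $\lim_{l\to\infty}\lambda_l=\lambda^c_\infty$ with $\lambda_l:=\lambda_p(\mathcal{L}^{J,c}_{(-l,l)})$, following the two-sided strategy of Theorem \ref{l2.2}.

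For the upper bound $\lambda_l\leq\lambda^c_\infty$, I would use the smooth positive test function $\phi_0(x):=e^{\nu_0 x}$, where $\nu_0$ is the unique minimizer from Lemma \ref{uni}(2). Since $\phi_0\in X_{(-l,l)}$ and no boundary condition is required by the definition \eqref{lambdaL2}, a change of variables gives
\[
\frac{\mathcal{L}^{J,c}_{(-l,l)}[\phi_0](x)}{\phi_0(x)}=\int_{x-l}^{x+l}J(z)e^{-\nu_0 z}\,dz+c\nu_0\leq\int_{\mathbb{R}}J(z)e^{-\nu_0 z}\,dz+c\nu_0=\lambda^c_\infty
\]
for every $x\in(-l,l)$, so $\lambda_l\leq\lambda^c_\infty$ for every $l>0$.

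For the lower bound $\lim_{l\to\infty}\lambda_l\geq\lambda^c_\infty$, I would mimic Steps 2--3 of Theorem \ref{l2.2}: first handle compactly supported $J$, then pass to general $J$ by the monotone approximation in Lemma \ref{uni}(1) together with Lemma \ref{uni}(3). For compactly supported $J$ the sup-inf formula in \eqref{minmax} calls for a test function $\varphi\in X^c_{(-l,l)}$ with $\inf_{x}\mathcal{L}^{J,c}_{(-l,l)}[\varphi]/\varphi\geq\lambda^c_\infty-\epsilon$. The ansatz is precisely the piecewise exponential $\varphi$ of Theorem \ref{l2.2}, in which $\varphi$ equals a constant multiple of $e^{(\nu_0\mp j\delta_0)x}$ on each $I_j^{\pm}$. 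On such a piece $\varphi'/\varphi=\nu_0\mp j\delta_0$ is a constant, so the drift contributes exactly $c(\nu_0\mp j\delta_0)$ to the ratio, and combining with the leading integral term yields
\[
\int J(z)e^{-(\nu_0\mp j\delta_0)z}\,dz+c(\nu_0\mp j\delta_0)\geq\inf_{\nu\in\mathbb{R}}\Big[\int J(z)e^{-\nu z}\,dz+c\nu\Big]=\lambda^c_\infty
\]
by the minimality of $\nu_0$. The six case-by-case estimates of Theorem \ref{l2.2} therefore extend essentially verbatim with $\lambda_\infty$ replaced by $\lambda^c_\infty$, provided the boundary behavior is handled correctly.

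The main obstacle will be enforcing the admissibility conditions of $X^c_{(-l,l)}$: the test function must be $C^1$ on $(-l,l)$ and must vanish at $x=l$ when $c>0$ (or at $x=-l$ when $c<0$), neither of which is satisfied by the raw piecewise exponential. To obtain $C^1$ regularity I would mollify $\varphi$ against a narrow approximate identity at each junction; since the one-sided limits of $\varphi'$ are bounded, the perturbation of $\mathcal{L}^{J,c}[\varphi]/\varphi$ is small. To enforce the boundary condition (say $c>0$) I would multiply $\varphi$ on a thin transition zone $[l-\eta_l,l]$ by a smooth factor that decreases from $1$ to $0$ at $x=l$. The subtlety is that in this zone $c\varphi'/\varphi\to-\infty$, but this is compensated by $\mathcal{L}^J[\varphi]/\varphi\to+\infty$ since $\varphi(l)=0$ while $\int_{-l}^l J(l-y)\varphi(y)\,dy>0$; a judicious choice of tapering profile and width $\eta_l$ (slowly decaying in $l$, e.g.\ $\eta_l\sim l^{-1/2}$) should keep the net ratio $\geq\lambda^c_\infty-\epsilon$ throughout the transition region. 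Once the compactly supported case is secured, the extension to general $J$ follows Step 3 of Theorem \ref{l2.2} verbatim, using Lemma \ref{uni}(3) for the convergence of the corresponding infima.
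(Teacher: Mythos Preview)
Your overall plan matches the paper's: the upper bound via $e^{\nu x}$, the reduction to compactly supported $J$ via Lemma~\ref{uni}, and the piecewise-exponential test function with the six-case analysis are all exactly what the paper does. The two points of divergence are how you pass from piecewise-$C^1$ to admissible test functions, and how you enforce the boundary condition $\varphi(l)=0$.

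On the first point you are fine: mollifying at the junctions is a legitimate alternative to what the paper does, which is to prove a separate lemma (Step~1 of its proof) showing that the sup--inf characterisation \eqref{minmax} remains valid over the larger class $\tilde X^c_{(-l,l)}$ of piecewise-$C^1$ functions with the convention $\phi'(x)=\min\{\phi'(x-0),\phi'(x+0)\}$. Both routes work; the paper's avoids tracking mollification errors.

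The genuine gap is in your boundary treatment. With a linear cutoff $\chi$ on $[l-\eta_l,l]$, the drift contributes $c\chi'/\chi=-c/(l-x)$ near $x=l$, while the nonlocal term divided by $\chi(x)\varphi(x)$ grows only like $\eta_l C_1/(l-x)$, where $C_1=\int_0^M J(z)e^{-(\nu_0-\sigma)z}\,dz$ is a constant fixed by the choice of $\sigma$ and $k$. Keeping the ratio bounded below therefore forces $\eta_l C_1\geq c$, i.e.\ $\eta_l$ bounded away from zero; your suggestion $\eta_l\sim l^{-1/2}$ makes the coefficient of $1/(l-x)$ eventually negative and the estimate collapses. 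The paper resolves this by going in the opposite direction: it works on the doubled interval $(-2l,2l)$ and takes the entire stretch $[l,2l)$ as the tapering zone, setting $\varphi(x)=\frac{2l-x}{l}\,e^{(\nu_0-\sigma)x+\text{const}}$ there. Then the singular drift is $-c/(2l-x)$ while the nonlocal term produces $\frac{1}{2l-x}\int_0^M yJ(y)e^{-(\nu_0-\sigma)y}\,dy$, and $\sigma$ is chosen at the outset so that this last integral exceeds $c$ (the second inequality in \eqref{large-k}). Four additional cases (the paper's Cases~7--10) then cover $x\in(-2l,-l]\cup[l,2l)$. So your intuition that the nonlocal term compensates the drift is right, but the width of the tapering zone must be of order one or larger, not shrinking.
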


\begin{proof}
	 We will use a variation of \eqref{minmax} and then complete the proof by further developing the ideas in the proof of Theorem \ref{l2.2}.  
	 
	 \medskip
	 
	 {\bf Step 1.} We prove the following variation of \eqref{minmax}:
	 	 \begin{align}\label{minmax1}
	\lambda_p(\mathcal{L}^{J, c}_{(-l,l)})=\sup\limits_{\phi\in \tilde X^c_{(-l,l)}}\inf\limits_{x\in (-l,l)}\frac{\mathcal{L}^{J,c}_{(-l,l)}[\phi](x)}{\phi (x)},
\end{align}
where
\[\begin{aligned}
\tilde X^c_{(-l,l)}:=\big\{ \phi\in C([-l,l]): &\  \phi\in X^c_{(-l,l)} \mbox{ except that $\ \phi'(x)$ is continuous  }\\
& \mbox{ in $(-l,l)$ with possible exception at  finitely  }\\
& \mbox{ many points where $\phi'(x)$ has jumping discontinuities}\big\},
\end{aligned}
\]
and for $\phi\in \tilde X^c_{(-l,l)}$ we always define
\[
\phi'(x)=\min\{\phi'(x-0),\phi'(x+0)\} \mbox{ for } x\in (-l,l).
\]
	
To simplify notations in the analysis below, we will write $\lambda_p=\lambda_p(\mathcal{L}^{J, c}_{(-l,l)})$.
Since $X^c_{(-l,l)}\subset \tilde X^c_{(-l,l)}$, we have
\[
\tilde\lambda:=\sup\limits_{\phi\in \tilde X^c_{(-l,l)}}\inf\limits_{x\in (-l,l)}\frac{\mathcal{L}^{J,c}_{(-l,l)}[\phi]}{\phi}\geq \sup\limits_{\phi\in  X^c_{(-l,l)}}\inf\limits_{x\in (-l,l)}\frac{\mathcal{L}^{J,c}_{(-l,l)}[\phi]}{\phi}=\lambda_p
\]
 
If $\tilde\lambda>\lambda_p$, then we can find $\lambda\in (\lambda_p, \tilde\lambda)$ and $\phi\in \tilde X^c_{(-l,l)}$ such that
\[
\frac{\mathcal{L}^{J,c}_{(-l,l)}[\phi](x)}{\phi(x)}\geq \lambda>\lambda_p \mbox{ in } (-l,l).
\]
We will show that this leads to a contradiction.

 For definiteness, from now on we assume $c>0$ and so $\phi(x)>0$ in $[-l, l)$ with $\phi(l)=0$. The case $c<0$ can be handled analogously.
 
Using the above inequality we obtain
\[
\int_{-l}^lJ(x-y)\phi(y)dy+c\phi'(x)\geq \lambda \phi(x) \mbox{ in } (-l,l).
\]
It follows that
\[
\liminf_{x\to l}\phi'(x)\geq -\frac 1 c \int_{-l}^lJ(l-y)\phi(y)dy=:-\sigma_0.
\]
Let $\psi>0$ be an eigenfunction associated to the principal eigenvalue $\lambda_p$:
\[
\int_{-l}^l J(x-y)\psi(y)dy+c\psi'(x)=\lambda_p \psi(x)\ \mbox{ for }  x\in (-l, l).
\]
Then
\[
\lim_{x\to l} \psi'(x)=-\frac 1 c \int_{-l}^lJ(l-y)\psi(y)dy<0.
\]
Therefore 
\[
v_\delta(x):=\psi(x)-\delta\phi(x)>0 \mbox{ in } [-l, l) \mbox{ for all small $\delta>0$.}
\]
Clearly $v_\delta(-l)<0$ for all large $\delta>0$. Therefore
\[
\delta_0:=\sup\{\delta: v_\delta(x)>0 \mbox{ in } [-l, l)\}
\]
is a well-defined positive number and 
\[
v_{\delta_0}(x)\geq 0 \mbox{ in } [-l, l].
\]
Moreover,
\begin{equation}\label{delta0}
\int_{-l}^lJ(x-y)v_{\delta_0}(y)dy+c v_{\delta_0}'(x)\leq\lambda_p\psi(x)-\delta_0 \lambda \phi(x)<\lambda v_{\delta_0}(x) \mbox{ in } (-l, l).
\end{equation}

We must have $v_{\delta_0}(x)\not\equiv 0$ for otherwise $\phi(x)\equiv \frac 1 {\delta_0}\psi(x)$, which leads to the contradiction 
\[
\lambda\leq \frac{\mathcal{L}^{J,c}_{(-l,l)}[\phi](x)}{\phi(x)}=\lambda_p<\lambda.
\]
 So we have two possibilities:

(a): $v_{\delta_0}(x)>0$ in $[-l, l)$ and $v_{\delta_0}(l)=0$,

(b): $\{x\in [-l, l): v_{\delta_0}(x)>0\}$ has a limiting  point $x_0\in [-l, l)$ with $v_{\delta_0}(x_0)=0$.\medskip

In case (b) we can find $y_n\in (-l, l)$ such that $y_n\to x_0$ as $n\to\infty$, $v_{\delta_0}'(y_n)$ exists and $v_{\delta_0}'(y_n)\geq 0$. Then take $x=y_n$ in \eqref{delta0} and let $n\to\infty$, and we obtain, due to $J(0)>0$,
\[
0<\int_{-l}^lJ(x_0-y)v_{\delta_0}(y)dy\leq 0,
\]
which is impossible.

In case (a) it is easy to see that $\limsup_{x\to l}v_{\delta_0}'(x)< 0$ leads to a contradiction to the definition of $\delta_0$. Therefore 
$\limsup_{x\to l}v_{\delta_0}'(x)\geq 0$. Choose $y_n\in (-l, l)$ such that $y_n\to l$ and $v_{\delta_0}'(y_n)\to \limsup_{x\to l}v_{\delta_0}'(x)$ as $n\to\infty$, and then take $x=y_n$ in \eqref{delta0} and let $n\to\infty$; we again arrive at the contradiction
\[
0<\int_{-l}^lJ(l-y)v_{\delta_0}(y)dy\leq 0.
\]
Therefore we must have $\tilde\lambda=\lambda_p$, as desired.\bigskip

{\bf Step 2.}  We show that
\begin{align}\label{lambda^c}
	\sup\limits_{\phi\in \tilde X^c_{(-l,l)}}\inf\limits_{x\in (-l,l)}\frac{\mathcal{L}^{J,c}_{(-l,l)}[\phi](x)}{\phi (x)}\to \lambda_\infty^c \mbox{ as } l\to\infty.
\end{align}

Denote $\lambda^c_l:=\lambda_p(\mathcal L^{J,c}_{(-l,l)})$. By \cite[Theorem 1.2]{coville20} we have
\begin{align*}\label{6b}
\lambda^c_l=\inf_{\phi\in X_{(-l,l)},\, \phi>0\,\text{in}\, (-l,l)}\ \sup_{x\in(-l,l)}\frac{\mathcal{L}^{J, c}_{(-l,l)}[\phi](x)}{\phi(x)}.
\end{align*}
 Hence, for any $\nu\in \mathbb{R}$, taking $\phi(x)=e^{\nu x}$ we obtain
\begin{align*}
\lambda^c_l&\leq \sup_{x\in(-l,l)}\frac{\int_{-l}^lJ(x-y)e^{\nu y}\,dy+c\nu e^{\nu x}}{e^{\nu x}}\\
&=\sup_{x\in(-l,l)}\int_{x-l}^{x+l}J(y)e^{-\nu y}\,dy+c\nu \leq \int_{\mathbb{R}}J(y)e^{-\nu y}\,dy +c\nu,
\end{align*}
which clearly implies $\lambda^c_l\leq \lambda^c_\infty$. It follows that $\lim_{l\to\infty}\lambda_l^c\leq \lambda^c_\infty$.
\medskip

	By \eqref{minmax1},
	\[
	\lambda^c_l\geq \inf\limits_{x\in (-l,l)}\frac{\mathcal{L}^{J,c}_{(-l,l)}[\phi](x)}{\phi (x)}
	\]
	for any $\phi\in \tilde X^c_{(-l,l)}$. We are going to construct a special function $\phi\in \tilde X^c_{(-l,l)}$ to prove $\lim_{l\to\infty} \lambda_l^c\geq \lambda^c_\infty$, by  modifying the function used in the proof of Theorem \ref{l2.2}.
	 
	 By Lemma \ref{uni}, there exists $\nu_0=\nu_0^c\in\mathbb R$ such that
	 \begin{equation}\label{stardrift}
	 \lambda_\infty^c=\int_{\mathbb{R}} J(x)e^{-\nu_0 x}\,dx +c\nu_0=\inf\limits_{\nu\in\mathbb{R}}\left[\int_{\mathbb{R}}J(x)e^{-\nu x}\,dx+c\nu\right].
	 \end{equation}
	 
	 The reasoning in Step 3 of the proof of Theorem \ref{l2.2} can be applied to our current situation to conclude  that it suffices to prove the desired inequality for $J$ with compact support. So from now on in this proof, we assume additionally that $J$ is compactly supported.
	 
Fix   some constant $M>0$ such that $\text{supp}(J)\subset (-M,M)$. Since $J(0)>0$, we easily see that
\[
\lim_{\nu\to\infty}\int_{-M}^0 J(y)e^{-\nu y}\,dy-|c|\nu =\lim_{\nu\to\infty}\int_0^M J(y)e^{\nu y}\,dy-|c|\nu=\infty,
\]
and
\[
\lim_{\nu\to\infty}\int_{-M}^0 |y|J(y)e^{-\nu y}\,dy =\lim_{\nu\to\infty}\int_0^M yJ(y)e^{\nu y}\,dy=\infty.
\]
Therefore we can find a large positive number $\sigma$  such that 
{\small \begin{equation}\label{large-k}
\begin{cases}\displaystyle
\mu_\sigma:=\min\Big\{\int_0^M \!\!\!\!\!J(y)e^{-(\nu_0-\sigma)y}\,dy, \int_{-M}^0\!\!\! \!\!J(y)e^{-(\nu_0+\sigma)y}\,dy\Big\}\!\!>\!\!|c|(\nu_0\!+\!\sigma)\!+\! \lambda^c_{\infty}\!+ \!\frac{c}{M},\medskip\\
\displaystyle \min\Big\{\int_0^M yJ(y)e^{-(\nu_0-\sigma)y}\,dy, \int_{-M}^0 |y|J(y)e^{-(\nu_0+\sigma)y}\,dy\Big\} > c.
\end{cases}
\end{equation}}
For any given $\epsilon>0$, there exists a large positive integer $k$ (depending on $\epsilon$) such that
\begin{equation}\label{less}
0<(1-e^{-\frac\sigma k M})\max\Big\{\int_{\mathbb R} J(y) e^{-\nu_0y}\,dy, \int_0^M \!\!\!\!\!J(y)e^{-(\nu_0-\sigma)y}\,dy, \int_{-M}^0\!\!\! \!\!J(y)e^{-(\nu_0+\sigma)y}\,dy \Big\}<\epsilon.
\end{equation}

For  $l>l_{\epsilon}:=4M(k+1)$, clearly we have $M<\frac 14 \frac{l}{k+1}$. Then  define, for $j=0, 1,2,..., k$,
\[
I_j^+:=\Big[\frac{j}{k+1}l,\ \frac{j+1}{k+1}l\Big),\ I_j^-:=\Big(-\frac{j+1}{k+1}l,\ -\frac{j}{k+1}l\Big],\ I_{k+1}^+:=[l, 2l),\ I_{k+1}^-:=(-2l, -l],
\]
and
\begin{equation*} \delta_0:=\frac\sigma k,\ \ 
\varphi(x):=\left\{
\begin{array}{ll}
\displaystyle    e^{(\nu_0-j\delta_0)x+\delta_0\Sigma_{i=1}^j \frac i{k+1}l}, & x\in I^+_j,\ j=1,..., k,\medskip\\
    e^{\nu_0 x}, & x\in I_0^+\cup I_0^-, \\
   \displaystyle  e^{(\nu_0+j\delta_0)x+\delta_0\Sigma_{i=1}^j \frac i{k+1}l}, & x\in I^-_j,\ j=1,..., k.
  \end{array}
\right.
\end{equation*}
Obviously, $\varphi$ is  continuous  and piecewise smooth on the interval $(-l,l)$.
 We then extend $\varphi$ to $(-2l, 2l)$ by defining
\[
\varphi(x)=\begin{cases}\frac {2l-x} le^{(\nu_0-k\delta_0)x+\frac k2 \delta_0 l } &\mbox{ for } l\leq x< 2l,\\
e^{(\nu_0+ k\delta_0)x+\frac k2 \delta_0 l} &\mbox{ for } -2l< x\leq -l,
\end{cases} \mbox{ \ \ \ when } c>0,
\]
\[
\varphi(x)=\begin{cases}\frac{2l+x}le^{(\nu_0+k\delta_0)x+\frac k2 \delta_0 l } &\mbox{ for } -2l\leq x\leq -l,\\
e^{(\nu_0-k\delta_0)x+\frac k2 \delta_0l} &\mbox{ for } l\leq x\leq 2l,
\end{cases} \mbox{ \ \ \ when } c<0.
\]
Hence $\varphi\in \tilde X^c_{(-2l,2l)}$, and to show the desired estimate   it suffices to show that 
\begin{align}\label{6}
E_l(x)>\lambda^c_{\infty}-\epsilon \mbox{ for } x\in (-2l, 2l),\ l> l_\epsilon.
\end{align}
where 
\[
E_l(x):=\frac{\mathcal{L}^{J,c}_{(-2l,2l)}[\varphi](x)}{\varphi(x)}=\frac{\displaystyle\int_{x-2l}^{x+2l}J(y)\varphi(x-y)\,dy+c\varphi'(x)}{\varphi(x)}.
\]

Next we estimate $E_l(x)$ according to the location of $x$ in $ (-2l, 2l)$. Firstly we notice that  
\[
[x-2l, x+2l]\supset [-M, M] \mbox{ for $x\in [-2l+M, 2l-M]$.}
\]
 
 For definiteness, we only consider the case $c>0$; the case $c<0$ can be treated analogously.\medskip
	
	We will check each of the six cases in the proof of Theorem \ref{l2.2} for the new situation here, plus four new cases.
	
\underline{\bf Case 1}.  $0\leq x< \frac{l}{k+1}-M$.

For $x$ in this range we have 
\[
\mbox{$-\frac{l}{k+1}< x-M\leq x+M< \frac l{k+1}$.}
\]
 Therefore
$$E_l(x)=\frac{\int_{-M}^{M}J(y)\varphi(x-y)\,dy+c\varphi'(x)}{\varphi(x)}=\int_{-M}^{M}J(y)e^{-\nu_0y}\,dy+c\nu_0=\lambda_{\infty}^c.$$

\underline{\bf Case 2}.  $\frac{l}{k+1}-M\leq  x< \frac{l}{k+1}$.

For $x$ in this range by  the definition of $\varphi$,  \eqref{stardrift} and \eqref{less}, we obtain 
\begin{align*}
E_l(x) &=\frac{\int_{-M}^{M}J(y)\varphi(x-y)\,dy+c\varphi'(x)}{\varphi(x)}\\
&=\int_{x-\frac{l}{k+1}}^{M}J(y)e^{-\nu_0y}\,dy+e^{\delta_0(\frac{l}{k+1}-x)}\int_{-M}^{x-\frac{l}{k+1}}J(y)e^{-(\nu_0-\delta_0)y}\,dy+c\nu_0\\
&\geq \int_{x-\frac{l}{k+1}}^{M}J(y)e^{-\nu_0y}\,dy+\int_{-M}^{x-\frac{l}{k+1}}J(y)e^{-(\nu_0-\delta_0)y}\,dy+c\nu_0\\
&=\int_{-M}^{M}J(y)e^{-\nu_0y}\,dy+c\nu_0+\int_{-M}^{x-\frac{l}{k+1}}J(y)e^{-\nu_0y}(e^{\delta_0y}-1)\,dy\\
&\geq \lambda^c_{\infty}-\int_{-M}^{0}J(y)e^{-\nu_0 y}(1-e^{\delta_0 y})\,dy\\
&\geq\lambda^c_{\infty}-(1-e^{-\delta_0 M})\int_{-M}^{0}J(y)e^{-\nu_0 y}\,dy>\lambda^c_{\infty}-\epsilon.
 \end{align*}

\underline{\bf Case 3}.  $\frac{jl}{k+1}\leq  x< \frac{jl}{k+1}+M$, $1\leq j\leq k$.

By the definition of $\varphi$ and \eqref{less}, we obtain
\begin{align*}
E_l(x) &=\frac{\int_{-M}^{M}J(y)\varphi(x-y)\,dy+c\varphi'(x)}{\varphi(x)}\\
&=\frac{\int_{x-\frac{jl}{k+1}}^{M}J(y)\varphi \Big|_{I_{j-1}^+}(x-y)\,dy+\int_{-M}^{x-\frac{jl}{k+1}}J(y)\varphi \Big|_{I_{j}^+}(x-y)\,dy+c\varphi'\Big|_{I_j^+}(x)}{\varphi\Big|_{I_j^+}(x)}\\
&\geq e^{\delta_0(x-\frac{jl}{k+1})}\int_{x-\frac{jl}{k+1}}^{M}J(y)e^{-(\nu_0-j\delta_0+\delta_0)y}\,dy+\int_{-M}^{x-\frac{jl}{k+1}}J(y)e^{-(\nu_0-j\delta_0)y}\,dy+c(\nu_0-j\delta_0)\\
&\geq \int_{x-\frac{jl}{k+1}}^{M}J(y)e^{-(\nu_0-j\delta_0+\delta_0)y}\,dy +\int_{-M}^{x-\frac{jl}{k+1}}J(y)e^{-(\nu_0-j\delta_0)y}\,dy+c(\nu_0-j\delta_0)\\
&=\int_{-M}^{M}J(y)e^{-(\nu_0-j\delta_0)y}\,dy+c(\nu_0-j\delta_0)-(1-e^{-\delta_0M})\int_{0}^{M}J(y)e^{-(\nu_0-k\delta_0)y}\,dy\\
&>\lambda^c_{\infty}-\epsilon \mbox{ by the minimality property of $\nu_0$.}
 \end{align*}

\underline{\bf Case 4}.  $\frac{jl}{k+1}+M\leq  x< \frac{(j+1)l}{k+1}-M$, $1\leq j\leq k$.

Making use of the definition of $\varphi$ and \eqref{stardrift}, we have
$$E_l(x)=\frac{\int_{-M}^{M}J(y)\varphi \Big|_{I_j^+}(x-y)\,dy+c\varphi' \Big|_{I_j^+}(x)}{\varphi \Big|_{I_j^+}(x)}
= \int_{-M}^{M}J(y)e^{-(\nu_0-j\delta_0)y}\,dy+c(\nu_0-j\delta)>\lambda^c_{\infty}.$$

\underline{\bf Case 5}.   $\frac{(j+1)l}{k+1}-M\leq  x<\frac{(j+1)l}{k+1}$, $1\leq j\leq k-1$.

Using  the definition of $\varphi$ and \eqref{stardrift}, \eqref{less}, we deduce
\begin{align*}
E_l(x)
&=\int_{x-\frac{(j+1)l}{k+1}}^{M}J(y)e^{-(\nu_0-j\delta_0)y}\,dy+e^{\delta_0(\frac{(j+1)l}{k+1}-x)}\int_{-M}^{x-\frac{(j+1)l}{k+1}}J(y)e^{-(\nu_0-j\delta_0-\delta_0)y}\,dy
+c(\nu_0-j\delta_0)\\
&\geq \int_{x-\frac{(j+1)l}{k+1}}^{M}J(y)e^{-(\nu_0-j\delta_0)y}\,dy+\int_{-M}^{x-\frac{(j+1)l}{k+1}}J(y)e^{-(\nu_0-j\delta_0-\delta_0)y}\,dy+c(\nu_0-j\delta_0)\\
&=\int_{-M}^{M}J(y)e^{-(\nu_0-j\delta_0)y}\,dy+c(\nu_0-j\delta_0)+\int_{-M}^{x-\frac{(j+1)l}{k+1}}J(y)e^{-(\nu_0-j\delta_0)y}(e^{\delta_0y}-1)\,dy\\
&>\lambda^c_{\infty}-(1-e^{-\delta_0M})\int_{-M}^{0}J(y)e^{-\nu_0y}\,dy>\lambda^c_{\infty}-\epsilon.
 \end{align*}

\underline{\bf Case 6}.   $x\in [l-M,l)$.

 Using   \eqref{large-k}, we obtain
\begin{align*}
E_l(x) &\geq \frac{\int_{x-l}^{M}J(y)\varphi(x-y)\,dy+c\varphi'(x)}{\varphi(x)}=\frac{\int_{x-l}^{M}J(y)\varphi\Big|_{I_k^+}(x-y)\,dy}{\varphi\Big|_{I_k^+}(x)}+c(\nu_0-k\delta_0)\\ 
&=\int_{x-l}^{M}J(y)e^{-(\nu_0-k\delta_0)y}\,dy+c(\nu_0-k\delta_0)\\
&\geq\int_{0}^{M}J(y)e^{-(\nu_0-k\delta_0)y}\,dy+c(\nu_0-k\delta_0)>\lambda^c_{\infty}.
 \end{align*}

To sum up, we have proved that $E_l(x)>\lambda^c_{\infty}-\epsilon$ for $x\in [0,l)$ and  $l>l_\epsilon$. Similarly, we can show $E_l(x)>\lambda^c_{\infty}-\epsilon$ for $x\in (-l,0]$ and  $l>l_\epsilon$. 

We next consider the case $x\in (-2l, -l]\cup [l, 2l)$.\medskip

\underline{\bf Case 7}.   $x\in [l,l+M)$.

We have, by using \eqref{large-k} in the last inequality below,
\begin{align*}
E_l(x) &=\frac{\int_{-M}^{M}J(y)\varphi(x-y)\,dy+c\varphi'(x)}{\varphi(x)}\\
&\geq \frac{\int^{x-l}_{-M}J(y)\varphi\Big|_{I_{k+1}^+}(x-y)\,dy}{\varphi\Big|_{I_{k+1}^+}(x)}+\frac{\int_{x-l}^{M}J(y)\varphi\Big|_{I_k^+}(x-y)\,dy}{\varphi\Big|_{I_{k+1}^+}(x)}+c(\nu_0- k \delta_0)-\frac {c}{2l-x}\\ 
&=\int^{x-l}_{-M}J(y)e^{-(\nu_0-k\delta_0)y}\,dy+\frac 1{2l-x}\int^{x-l}_{-M}J(y)ye^{-(\nu_0-k\delta_0)y}\,dy\\
&\ \ \ \ +\frac l{2l-x}\int_{x-l}^{M}J(y)e^{-(\nu_0-k\delta_0)y}\,dy +c(\nu_0-k\delta_0)-\frac {c}{2l-x}\\
&=\int^{M}_{-M}J(y)e^{-(\nu_0-k\delta_0)y}\,dy+c(\nu_0-k\delta_0)+\frac {c}{2l-x}\int^{x-l}_{-M}J(y)ye^{-(\nu_0-k\delta_0)y}\,dy\\
&\ \ \ \ +(\frac l{2l-x}-1)\int_{x-l}^{M}J(y)e^{-(\nu_0-k\delta_0)y}\,dy -\frac {c}{2l-x}\\
&\geq  \int^{M}_{-M}J(y)e^{-(\nu_0-k\delta_0)y}\,dy+c(\nu_0-k\delta_0)- \frac M{l-M} \int_{-M}^{0}J(y)e^{-(\nu_0-k\delta_0)y}\,dy -\frac {c}{l-M}\\
&\geq \int_{0}^{M}J(y)e^{-(\nu_0-k\delta_0)y}\,dy+c(\nu_0-k\delta_0) -\frac {c}M>\lambda_\infty^c,
 \end{align*}
where we have used the fact that $\frac M{l-M}\leq 1$.
 
 \underline{\bf Case 8}.   $x\in [l+M, 2l-M)$. 
 
 In this case we have, by using \eqref{large-k} in the last step,
 \begin{align*}
E_l(x) &=\frac{\int_{-M}^{M}J(y)\varphi(x-y)\,dy+c\varphi'(x)}{\varphi(x)}\\
&\geq \frac{\int^{M}_{-M}J(y)\varphi\Big|_{I_{k+1}^+}(x-y)\,dy}{\varphi\Big|_{I_{k+1}^+}(x)}+c(\nu_0- k \delta_0)-\frac {c}{2l-x}\\ 
&=\int^{M}_{-M}J(y)e^{-(\nu_0-k\delta_0)y}\,dy+\frac 1{2l-x}\int^{M}_{-M}J(y)ye^{-(\nu_0-k\delta_0)y}\,dy+c(\nu_0-k\delta_0)-\frac {c}{2l-x}\\
&\geq \int^{M}_{-M}J(y)e^{-(\nu_0-k\delta_0)y}\,dy+c(\nu_0-k\delta_0)-\frac {M}{2l-x}\int^{0}_{-M}J(y)e^{-(\nu_0-k\delta_0)y}\,dy-\frac {c}M\\
&\geq \int^{M}_{0}J(y)e^{-(\nu_0-k\delta_0)y}\,dy+c(\nu_0-k\delta_0) -\frac{c}{M}>\lambda_\infty^c,
 \end{align*}
where we have used the fact that $\frac {M}{2l-x}\leq 1$ for $x\in [l+M, 2l-M)$. 

 \underline{\bf Case 9}.   $x\in [2l-M, 2l)$.
 
 For $x$ in this range by  the definition of $\varphi$,  \eqref{stardrift} and \eqref{large-k}, we obtain 
\begin{align*}
E_l(x) &=\frac{\int_{x-2l}^{M}J(y)\varphi(x-y)\,dy+c\varphi'(x)}{\varphi(x)}\\
&=\frac{\int^{M}_{x-2l}J(y)\varphi\Big|_{I_{k+1}^+}(x-y)\,dy}{\varphi\Big|_{I_{k+1}^+}(x)}+c(\nu_0- k \delta_0)-\frac {c}{2l-x}\\ 
&=\int^{M}_{x-2l}J(y)e^{-(\nu_0-k\delta_0)y}\,dy+\frac 1{2l-x}\int^{M}_{x-2l}J(y)ye^{-(\nu_0-k\delta_0)y}\,dy+c(\nu_0-k\delta_0)-\frac {c}{2l-x}\\
&\geq \int_{0}^{M}J(y)e^{-(\nu_0-k\delta_0)y}\,dy+c(\nu_0- k \delta_0)+\frac 1{2l-x}\left[\int^{M}_{0}J(y)ye^{-\nu_0y}\,dy-c\right]\\
 &>\lambda_\infty^c,
 \end{align*}
where we have used the fact that $1+\frac {y}{2l-x}\geq 0$ for $y\in [x-2l, 0]$. 
 
  \underline{\bf Case 10}.   $x\in (-2l, -l]$.
  
   For $x$ in this range by  the definition of $\varphi$ and \eqref{large-k}, we obtain 
\begin{align*}
E_l(x) &=\frac{\displaystyle\int_{-M}^{\min\{M,\ x+2l\}}J(y)\varphi(x-y)\,dy+c\varphi'(x)}{\varphi(x)}\\
&=\frac{\displaystyle\int_{-M}^{\min\{M,\ x+2l\}}J(y)\varphi\Big|_{I_{k+1}^-}(x-y)\,dy}{\varphi\Big|_{I_{k+1}^-}(x)}+c(\nu_0+k \delta_0)\\ 
&=\int_{-M}^{\min\{M,\ x+2l\}}J(y)e^{-(\nu_0+k\delta_0)y}\,dy+c(\nu_0+k\delta_0)\\
&\geq \int^{0}_{-M}J(y)e^{-(\nu_0+k\delta_0)y}\,dy+c(\nu_0+ k \delta_0)>\lambda_\infty^c.
 \end{align*}

Thus we have proved the validity of \eqref{6},
and Step 2 is finished.
	 The desired conclusion follows directly from Step 1 and 2. The proof is now complete.
\end{proof}

	\section{Propagation dynamics of  \eqref{cau}}
	
	Throughout this section we assume that $J$ satisfies ${\bf (J)}$. We will make use of \eqref{l-to-infty} to prove Theorems \ref{c-l} and \ref{th1.3a}.

	For $c\in\mathbb R$, we consider the operator
	\[
	\mathcal L^c [\phi](x):=d\int_{\mathbb{R}}J(x-y)\phi(y)dy-d\phi(x)+f'(0)\phi+c\phi'(x).
	\]
	Clearly
	\[
	\lambda_p(\mathcal L^c)=d\lambda_p(\mathcal L^{J, c/d}_{\mathbb R})+f'(0)-d.
	\]

		\subsection{The sign of the principal eigenvalue $\lambda_p(\mathcal L^c)$}
	
\begin{proposition}\label{l9.2} Let $\mathcal L^c$ be defined as above, and $c_*^-, c_*^+$ be given by \eqref{c*+-} and \eqref{c*-gen}. Then
\[ \lambda_p(\mathcal{L}^c)\begin{cases}>0& \mbox{ if  $c\in (c_*^-,c_*^+)$},\\
		<0 & \mbox{ if } c\not\in [c_*^-, c_*^+],\\
		=0 & \mbox{ if } c\in \{c_*^-, c_*^+\}.
		\end{cases}
		\]
\end{proposition}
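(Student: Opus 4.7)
The plan is to reduce everything to an analysis of the scalar convex function $H(\mu):=d\int_{\mathbb R}J(x)e^{\mu x}\,dx-d+f'(0)$. First I would apply Theorem \ref{th1.2a} (equivalently \eqref{l-to-infty}) and the substitution $\mu=-\nu$ to rewrite
\[
\lambda_p(\mathcal L^c)=\inf_{\mu\in\mathbb R}\bigl[H(\mu)-c\mu\bigr].
\]
Note that $H$ is convex (differentiating twice under the integral gives $H''\geq 0$), that $H(0)=f'(0)>0$, and that by Lemma \ref{uni} the infimum above is attained at some $\mu^\ast=\mu^\ast(c)\in\mathbb R$. Moreover, after replacing $\nu$ by $\mu$ in \eqref{c*+-}, the definitions (together with the convention \eqref{c*-gen}) read $c_*^-=\sup_{\mu<0} H(\mu)/\mu$ and $c_*^+=\inf_{\mu>0}H(\mu)/\mu$.

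The first two cases are direct. If $c\in(c_*^-,c_*^+)$, then for every $\mu>0$ one has $c<c_*^+\leq H(\mu)/\mu$, whence $H(\mu)-c\mu>0$; for every $\mu<0$ one has $c>c_*^-\geq H(\mu)/\mu$, and multiplying by the negative number $\mu$ flips the inequality to $c\mu<H(\mu)$, so again $H(\mu)-c\mu>0$; and at $\mu=0$ one simply has $f'(0)>0$. Since the infimum is attained, $\lambda_p(\mathcal L^c)=H(\mu^\ast)-c\mu^\ast>0$. If instead $c>c_*^+$, then $c$ strictly exceeds the infimum, so there is some $\mu_0>0$ with $H(\mu_0)/\mu_0<c$, i.e.\ $H(\mu_0)-c\mu_0<0$, which forces $\lambda_p(\mathcal L^c)\leq H(\mu_0)-c\mu_0<0$. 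The case $c<c_*^-$ is handled symmetrically.

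For the boundary cases $c\in\{c_*^-,c_*^+\}$ (when finite) I would deliberately avoid trying to produce a minimizer $\mu_0>0$ with $H(\mu_0)=c_*^+\mu_0$, since the infimum defining $c_*^+$ need not be attained when, for example, $c_*^+=H'(+\infty)$ is only an asymptotic slope. Instead I would use continuity: because $\lambda_p(\mathcal L^c)=\inf_\mu[H(\mu)-c\mu]$ is the infimum of a family of functions affine in $c$, it is concave in $c$ and therefore continuous on $\mathbb R$. Combining this with the strict positivity on $(c_*^-,c_*^+)$ and strict negativity outside $[c_*^-,c_*^+]$ obtained above, taking $c\to c_*^\pm$ from both sides yields $\lambda_p(\mathcal L^{c_*^\pm})=0$. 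The main obstacle in the argument is precisely this boundary case, and the concavity/continuity detour is what bypasses the delicate question of attainment of $c_*^\pm$ in its defining inf/sup.
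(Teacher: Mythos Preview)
Your argument is correct and essentially coincides with the paper's proof. Both reduce to the identity $\lambda_p(\mathcal L^c)=\inf_\mu[H(\mu)-c\mu]$, handle the interior and exterior cases by comparing $c$ with $H(\mu)/\mu$, and resolve the boundary case $c\in\{c_*^-,c_*^+\}$ by continuity of $c\mapsto\lambda_p(\mathcal L^c)$ rather than by locating an explicit minimizer; the only cosmetic differences are your substitution $\mu=-\nu$, your direct (instead of contrapositive) verification that $H(\mu)-c\mu>0$ for all $\mu$ when $c\in(c_*^-,c_*^+)$, and your derivation of continuity from concavity (infimum of affine functions) where the paper simply invokes the continuous dependence of $\lambda^{c/d}_\infty$ on $c$.
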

	\begin{proof} 	For fixed $c\in \mathbb R$  define
	\[
	A(\nu):=d\int_{\mathbb{R}}J(x)e^{-\nu x}\,dx-d+c\nu+f'(0).
	\]
	By Lemma \ref{uni} and Theorems \ref{l2.2} and \ref{th2.6}, there exists $\nu_0\in\mathbb R$ such that
		\begin{align*}
			A(\nu_0)=\inf\limits_{\nu\in\mathbb{R}}A(\nu)=d\lambda_p(\mathcal L^{J, c/d}_{\mathbb R})+f'(0)-d=\lambda_p(\mathcal L^c).
		\end{align*}
	Clearly,	
		\begin{align*}
	c-c_*^-=&	\inf\limits_{\nu<0}\frac{d\int_{\mathbb{R}}J(x)e^{\nu x}\,dx-d-c\nu+f'(0)}{-\nu}\\
	=&\inf\limits_{\nu>0}\frac{d\int_{\mathbb{R}}J(x)e^{-\nu x}\,dx-d+c\nu+f'(0)}{\nu}=\inf\limits_{\nu>0}\frac{A(\nu)}{\nu},\\
	c_*^+-c=&\inf\limits_{\nu>0}\frac{d\int_{\mathbb{R}}J(x)e^{\nu x}\,dx-d-c\nu+f'(0)}{\nu}\\
	=&\inf\limits_{\nu<0}\frac{d\int_{\mathbb{R}}J(x)e^{-\nu x}\,dx-d+c\nu+f'(0)}{-\nu}=\inf\limits_{\nu<0}\frac{A(\nu)}{-\nu}.
		\end{align*}
		Therefore $c<c_*^-$ implies $\inf\limits_{\nu>0}\frac{A(\nu)}{\nu}<0$ and hence $A(\nu_0)<0$ for some $\nu_0>0$, which leads to $\inf_{\nu\in\mathbb R}A(\nu)=\lambda_p(\mathcal L^c)<0$. Similarly, $c>c_*^+$ implies $\inf\limits_{\nu<0}\frac{A(\nu)}{-\nu}<0$ and hence $A(\nu_0)<0$ for some $\nu_0<0$, which again leads to $\inf_{\nu\in\mathbb R}A(\nu)=\lambda_p(\mathcal L^c)<0$.
		
		Now suppose $c\in (c_*^-, c_*^+)$. To prove the desired inequality it suffices to show $\inf_{\nu\in\mathbb R} A(\nu)=A(\nu_0)>0$.
Suppose, on the contrary, that $A(\nu_0)\leq 0$. Since $A(0)=f'(0)>0$, we must have  $\nu_0\neq 0$. 			
If $\nu_0<0$, then $0<c_*^+-c=\inf\limits_{\nu<0}\frac{A(\nu)}{-\nu}\leq \frac{A(\nu_0)}{-\nu_0}\leq 0$, and  if $\nu_0>0$, then $0<c-c_*^-=\inf\limits_{\nu>0}\frac{A(\nu)}{\nu}\leq \frac{A(\nu_0)}{\nu_0}\leq 0$. So $A(\nu_0)\leq 0$ always leads to a contradiction.   This proves $	A(\nu_0)>0$, and the desired inequality is proved.

From
\[
\lambda_p(\mathcal L^c)=d\lambda_p(\mathcal L^{J, c/d}_{\mathbb R})+f'(0)-d=d\lambda_\infty^{c/d}+f'(0)-d,
\]
we see that $\lambda_p(\mathcal L^c)$ depends continuously on $c$, and the conclusion $\lambda_p(\mathcal L^c)=0$ for $c\in\{c_*^-, c_*^+\}$ is a direct consequence of the inequalities just proved for $c\in (c_*^-, c_*^+)$ and $c\not\in [c_*^-, c_*^+]$.
The proof is finished.
	\end{proof}

\subsection{Proof of Theorem \ref{c-l}}
		
		Denote
		\[
	\mathcal L_l^c [\phi](x):=d\int_{-l}^lJ(x-y)\phi(y)dy-d\phi(x)+f'(0)\phi+c\phi'(x).
	\]
		
		\begin{lemma}\label{c=0}
		Suppose that  ${\bf (J)}$ and ${\bf (f_{KPP})}$ hold, and $V_0\in C([-l,l])$ is nonnegative and not identically $0$. If $c=0$, then \eqref{l-0} has a unique solution $V(t,x)$ and
		\[
		\lim_{t\to\infty} V(t,x)=\begin{cases} 0 &\mbox{ uniformly in $x\in[-l,l]$ if } \lambda_p(\mathcal L^0_l)\leq 0,\\
		V_l(x) &\mbox{ uniformly in $x\in[-l,l]$ if } \lambda_p(\mathcal L^0_l)> 0,
		\end{cases}
		\]
		where $V_l(x)$ is the unique positive stationary solution of \eqref{l-0}. Moreover, when $\lambda_p(\mathcal L^c)>0$ and hence
		$\lambda_p(\mathcal L^0_l)> 0$ for all large $l>0$, we have
		\[
		\lim_{l\to\infty} V_l(x)=1 \mbox{ uniformly for $x$ in any bounded interval of $\mathbb R$}.
		\]
		\end{lemma}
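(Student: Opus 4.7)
The plan is to treat \eqref{l-0} with $c=0$ as a standard nonlocal logistic problem: since the transport term vanishes, no boundary condition is imposed and \eqref{l-0} becomes a semilinear ODE $V_t = L[V] + f(V)$ in the Banach space $C([-l,l])$, where $L[V](x) := d\int_{-l}^l J(x-y)V(y)\,dy - dV(x)$ is a bounded linear operator. First I would obtain local existence and uniqueness by Picard iteration, and then upgrade to global existence using the a priori bound $0 \leq V(t,x) \leq \max\{\|V_0\|_\infty,1\}$ from comparison with $w' = f(w)$ (the constant $\max\{\|V_0\|_\infty,1\}$ is a spatially constant supersolution because $\int_{-l}^l J(x-y)\,dy \leq 1$ and $f(1)=0$).

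Next I would analyse the stationary problem. When $\lambda_p(\mathcal L^0_l) > 0$, let $\phi_l>0$ be a principal eigenfunction. For small $\varepsilon>0$ the function $\varepsilon\phi_l$ is a subsolution, since $L[\varepsilon\phi_l] + f(\varepsilon\phi_l) = \varepsilon\phi_l\bigl(\lambda_p(\mathcal L^0_l) - f'(0)\bigr) + f(\varepsilon\phi_l)$ is nonnegative once $f(\varepsilon\phi_l)/(\varepsilon\phi_l)$ is close enough to $f'(0)$; the constant $1$ is a supersolution. A monotone iteration between them produces a positive stationary solution $V_l\in[\varepsilon\phi_l,1]$. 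Uniqueness of $V_l$ is obtained from the strict monotonicity of $f(u)/u$ in $\mathbf{(f_{KPP})}$ by a sliding argument: for two positive stationary solutions $V_1,V_2$, the largest $\alpha>0$ with $\alpha V_1\leq V_2$ must equal $1$, else evaluating the stationary equations at a contact point and using $f(\alpha V_1)/(\alpha V_1) > f(V_1)/V_1$ yields a contradiction with $J(0)>0$.

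For the long-time dynamics when $\lambda_p(\mathcal L^0_l)>0$, I would sandwich $V(t,x)$: at some small $t_0>0$, a strong-positivity argument (iterating the integral term and using $J(0)>0$) gives $V(t_0,x)\geq\varepsilon\phi_l(x)$ on $[-l,l]$; the comparison principle then forces $V(t,x)$ to lie between the monotone sequences generated by flowing $\varepsilon\phi_l$ and $\max\{\|V_0\|_\infty,1\}$, both of which converge to a stationary solution, hence to $V_l$ by uniqueness. When $\lambda_p(\mathcal L^0_l)\leq 0$, any positive stationary solution $V^*$ would yield a test function showing $\lambda_p(\mathcal L^0_l)>0$ via \eqref{2.2c} applied to $\mathcal L^0_l/V^*$-scaled form, a contradiction; then $V(t,\cdot)\to 0$ by comparison with $Ce^{(\lambda_p+\delta)t}\phi_l$ when $\lambda_p<0$, and by a refined argument exploiting the strict inequality $f(u)<f'(0)u$ for $u>0$ when $\lambda_p=0$.

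For the final assertion, Theorem~\ref{l2.2} (together with the identity $\lambda_p(\mathcal L^0)=d\lambda_p(\mathcal L^{J,0}_{\mathbb R})+f'(0)-d$) gives $\lambda_p(\mathcal L^0_l)\nearrow\lambda_p(\mathcal L^0)>0$, so $V_l$ exists for all large $l$. Extending $V_l$ by zero beyond $[-l,l]$ produces a subsolution on any larger interval $[-l',l']$, so $V_l\leq V_{l'}$ on $[-l,l]$; being bounded above by $1$, the pointwise limit $V^*(x):=\lim_{l\to\infty}V_l(x)\in(0,1]$ exists. Dominated convergence (using $\mathbf{(J)}$) shows $V^*$ solves the stationary equation on $\mathbb R$, and the same sliding uniqueness argument, now applied on $\mathbb R$ to the bounded positive solutions $V^*$ and $1$, forces $V^*\equiv 1$; Dini's theorem upgrades pointwise to uniform convergence on bounded intervals. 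The principal obstacle is the uniqueness of $V_l$ (and of $V^*$ on $\mathbb R$), which leans crucially on the strict monotonicity of $f(u)/u$ in $\mathbf{(f_{KPP})}$ together with the positivity condition $J(0)>0$; the rest is a routine adaptation of the classical nonlocal logistic framework.
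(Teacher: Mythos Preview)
Your sketch is correct and is precisely the standard nonlocal-logistic argument that the paper invokes by citing Propositions~3.5 and~3.6 of \cite{cao} (the paper gives no details of its own here). One small correction: to rule out a positive stationary solution $V^*$ when $\lambda_p(\mathcal L^0_l)\le 0$, the relevant characterization is the sup--inf formula \eqref{6a}, not \eqref{2.2c}; from $\mathcal L^0_l[V^*]=f'(0)V^*-f(V^*)>0$ you get $\lambda_p(\mathcal L^0_l)\ge \inf_x \mathcal L^0_l[V^*]/V^*>0$, which is the contradiction you want. Also, your extension of $V_l$ by zero to $[-l',l']$ is discontinuous at $\pm l$ (no boundary condition is imposed when $c=0$, so $V_l(\pm l)>0$), but the subsolution comparison still goes through in $L^\infty$ since the equation is an ODE in that space; alternatively, one can compare the evolution problems on the nested intervals directly.
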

\begin{proof}
This follows from the same argument used in the proof of Propositions 3.5 and 3.6 of \cite{cao}, where the assumption that the kernel function $J$ is symmetric is not needed. We omit the details.
\end{proof}

It appears to us that when $c\not=0$, problem \eqref{l-0} has not been considered in the literature. We will prove that Lemma \ref{c=0} still holds for $c\not=0$ if $V_0\in\mathbb V_l^c$, by making use of an approach used in \cite{cao}, together with a comparison principle proved in \cite{duni20}.

If $V(t,x)$ solves \eqref{l-0}, then define
\[
U(t,x):=V(t, x-ct),\  g(t):=-l+ct, \ h(t):=l+ct,
\]
and it is easily seen that $U$ satisfies
\begin{equation}\label{U-l}
			\left\{
			\begin{array}{ll}
				\displaystyle U_t=d\int_{g(t)}^{h(t)}J(x-y)U(t,y)\,dy-dU+f(U), & t>0, x\in (g(t), h(t)), \\
				U(t,h(t))=0, & t>0 \mbox{ if } c>0,\\
				U(g(t), t)=0, & t>0 \mbox{ if } c<0,\\
				U(0,x)=V_0(x),  &x\in [-l, l].
			\end{array}
			\right.
		\end{equation}
		
\begin{lemma}\label{lem-U} 
For any $V_0\in\mathbb U_l^c$, \eqref{U-l} has a unique solution $U(t,x)$. Moreover, 	$U(t,x)$ is $C^1$ in $x$ for $x\in [g(t), h(t)]$ and $t\geq 0$ if $V_0\in \mathbb V^c_l$.	
\end{lemma}
\begin{proof} We will only consider the case $c>0$, as the proof for $c<0$ is analogous.

		Since $g'(t)=h'(t)=c>0$, by applying the maximum principle stated in Lemma 3.1 of \cite{duni20} (with $n=1$) we easily see that \eqref{U-l} can have at most one solution. To show the existence, we follow the approach in the proof of Lemma 2.3 of \cite{cao}. We cannot use this result directly since $g'(t)>0$, but the method there can be modified to treat \eqref{U-l}. 
		
		Fix any $T>0$, we are going to show that \eqref{U-l} has a unique solution defined for $t\in [0, T]$. As stated above, we will follow \cite{cao} and use a non-standard approach for this evolutionary problem to complete the proof of existence. We will do so in three steps. In Step 4, after the  existence is proved, we will show that the solution $U$ is $C^1$ in $x$ when $V_0\in \mathbb V^c_l$.
		
		 {\bf Step 1:} {\it A parametrized ODE problem.}

For given $x\in[-l,h(T)]$, define
\begin{equation}
\tilde u_0(x)=\left\{
\begin{aligned}
&0,& &x>l,\\
&V_0(x),& &x\in[-l,l]
\end{aligned}
\right.
~~~\text{and }~~~t_x=\left\{
\begin{aligned}
&0& &\mbox{ if } x\in[-l,l],\\
&(x-l)/c& &\mbox{ if $x\in(l,h(T)]$}.
\end{aligned}
\right.
\label{definition-of-t_x}
\end{equation}
Clearly $t_x=T$ for  $x=h(T)$, and $t_x<T$ for $x\in (l, h(T))$.
Define
\begin{align*}
&\Omega:=\left\{(t,x)\in\mathbb{R}^2: 0<t\leq T,~g(t)<x<h(t)\right\},\\
&\mathbb{X}=\Big\{\phi\in C(\overline\Omega)~:~\phi\ge0~\text{in}
~\Omega,~\phi(0,x)=V_0(x)~\text{for}~x\in [-l,l],~\\
& \hspace{6cm} \text{and} \;\; 
\phi(t,h(t))=0~\text{for }0\le t\le T\Big\}.
\end{align*}

 For any given $\phi\in
\mathbb{X}$, consider the following ODE initial value problem (with parameter $x$):
\begin{equation}\label{202}
\left\{
\begin{aligned}
 &v_t=d\int_{g(t)}^{h(t)}J(x-y)\phi(t,y)dy-dv(t,x)+\tilde f(v),& &t_x<t\le T,\\
&v(t_x,x)=\tilde u_0(x),& &x\in[-l,h(T)),
\end{aligned}
\right.
\end{equation}
where $\tilde f(v)=0$ for $v<0$, and $\tilde f(v)=f(v)$ for $v\geq 0$. 
Denote
$$
F(t,x,v)=d\int_{g(t)}^{h(t)}J(x-y)\phi(t,y)dy-dv
+\tilde f(v).
$$
Then, for any given $L>0$,
 and any $v_1,v_2
\in (-\infty, L]$, we have
$$
\Big|F(t,x,v_1)-F(t,x,v_2)\Big|\leq\Big|\tilde f(v_1)-\tilde f(v_2)\Big|
+d\Big|v_1-v_2\Big|,
$$
which implies that  the function $F(t,x,v)$ is Lipschitz continuous in $v$ for $v\in (-\infty, L]$
 with some Lipschitz
constant $K_L$, uniformly for $t\in [0, T]$ and $x\in [-l, h(T)]$. 
Additionally, $F(t,x,v)$ is continuous
in all its variables in this range.
Hence it follows from the Fundamental Theorem of
ODEs  that, for every fixed $x\in [-l, h(T))$, problem (\ref{202}) admits a
unique solution, denoted by $V_{\phi}(t,x)$, which is defined in some  interval $[t_x,T_x)$ of $t$.

We claim that $t\to V_\phi(t,x)$ can be uniquely extended to $[t_x, T]$. Clearly it suffices to show that if $V_\phi(t,x)$ is uniquely defined for $t\in [t_x, \tilde T]$ with $\tilde T\in (t_x, T)$, then
\begin{equation}\label{V-bd}
0\leq V_\phi(t,x)< L_\phi:=1+\|\phi\|_{C(\overline\Omega_T)}\mbox{ for } t\in (t_x, \tilde T].
\end{equation}

We first show that $V_\phi(t,x)<L_\phi$ for $ t\in (t_x, \tilde T]$. Arguing indirectly we assume that this inequality does not hold,
and hence, in view of $V_\phi(t_x,x)=\tilde u_0(x)\leq \|\phi\|_{C(\overline\Omega_T)} <L_\phi$, there exists some $t^*\in (t_x, \tilde T]$ such that
$V_\phi(t,x)<L_\phi$ for $t\in (t_x, t^*)$ and $V_\phi (t^*,x)=L_\phi$. It follows that $(V_\phi)_t(t^*,x)\geq 0$ and $\tilde f(V_\phi(t^*,x))\leq 0$
(due to $L_\phi>1$). We thus obtain from the differential equation satisfied by $V_\phi(t,x)$ that
\[
dL_\phi=dV_\phi(t^*,x)\leq d\int_{g(t^*)}^{h(t^*)}J(x-y)\phi(t^*,y)dy\leq d\|\phi\|_{C(\overline\Omega_T)}= d(L_\phi-1).
\]
It follows that $d\leq 0$. This contradiction proves our claim.

We now prove the first inequality in \eqref{V-bd}. Since
\[
\tilde f(v)=\tilde f(v)-\tilde f(0)\geq -K_L v \mbox{ for } v\in (-\infty, L],
\]
we have
\[
(V_\phi)_t\geq -K_L V_\phi +d\int_{g(t)}^{h(t)}J(x-y)\phi(t,y)dy\geq -K_L V_\phi \mbox{ for } t\in [t_x, \tilde t].
\]
Since $V_\phi(t_x,0)=\tilde u_0(x)\geq 0$, the above inequality immediately gives $V_\phi(t,x)\geq 0$ for $t\in [t_x, \tilde T]$.
 We have thus proved \eqref{V-bd}, and therefore the solution $V_\phi(t,x)$ of  \eqref{202} is uniquely defined for $t\in [t_x, T]$.

\medskip

 {\bf Step 2:} {\it A fixed point problem.}

Let us note that $V_{\phi}(0,x)=\tilde u_0(x)$ for $x\in [-l, l]$,
and $V_{\phi}(t,h(t))=0$ for $t\in [0, T]$. Moreover, by the continuous dependence of the unique solution on the initial value and on the parameters in the equation, we also see that $V_\phi(t,x)$ is continuous in $(t,x)\in\overline \Omega$. 

To use the Banach contraction theorem, we will need to first consider \eqref{U-l} for $t\in [0, s]$ with a sufficiently small $s>0$. For this purpose, we will denote $\Omega$ by $\Omega_s$ when $T=s$, and similarly $\mathbb X_s$ denotes $\mathbb X$ with $T=s$.
Then we accordingly define
 the mapping $\Gamma_s: \mathbb X_s\to \mathbb X_s$ by
\[
\Gamma_s \phi=V_{\phi},
\]
and so  $\Gamma_s \phi=\phi$ implies that $U=\phi(t,x)$ solves \eqref{U-l} for $t\in (0, s]$, and vice versa.

Checking the argument in Step 2 of the proof of Lemma 2.3 in \cite{cao}, we see that everything carries over to our current situation, and so for sufficiently small $s>0$,  say $s\in (0, \delta_1]$ with some small enough $\delta_1>0$, $\Gamma_s$ has a unique fixed point in $\mathbb X_s$.

\medskip

{\bf Step 3:} {\it Extension of the solution to $t=T$.}

This follows from the same argument as in Step 3 of the proof of Lemma 2.3 in \cite{cao}, where the process of Step 2 is repeated for $t\in [\delta_1, \delta_1+s]$ with $s\in (0,\delta_1]$, regarding $t=\delta_1$ as the new initial time, to obtain an extension of the solution to $t\in [0, 2\delta_1]$. This process can be repeated until $t=T$ is reached.

For use in Step 4 below, let us stress that $M$ and $\delta_1$ in Step 2 can be chosen the same when the process of Step 2 is repeated in Step 3 to extend the solution from $t\in [0, \delta_1]$ to $t\in [0, k\delta_1]$, $k=2,3,...$.
Since $T>0$ is arbitrary, we have thus proved that \eqref{U-l}  has a unique solution. 

To complete the proof of the lemma, it remains to finish the next step.

\medskip

{\bf Step 4:} We show that $U$ is $C^1$ in $x$ if $V_0\in \mathbb V_l^c$. 

Firstly we define a subset of $\mathbb X$ given in Step 1 above by 
\[
\mathbb X^1:=    \mathbb X\cap C^1(\overline \Omega).
\]
For $\phi\in \mathbb X^1$, 
\[
\int_{g(t)}^{h(t)}J(x-y)\phi(t,y)dy=\int_{x-h(t)}^{x-g(t)}J(z) \phi(t,x-z)dz \mbox{ is $C^1$ in $x$.}
\]

Now $V_0(x)$ is continuously differential in $[-l,l]$, so $V_\phi(t,x)$ obtained in Step 1 but with $\phi\in\mathbb X^1$, viewed as the solution of an ODE with parameter $x$, by standard ODE theory, 
is continuously differentiable in $x$ for $x\in [-l,l]$ and $t\in [0, \frac{x+l}c]$. Here the differentiation in $x$ at $x=\pm l$ is understood as one-sided differentiation. 
Similarly, since $t_x=\frac{x-l}c$ is smooth in $x$ for $x\geq l$, we see that $V_\phi(t,x)$ is continuously differentiable in $x$ for $x\geq l$ and $t\in [t_x, \frac{x+l}c]$. We show next that
\[
\lim_{x\nearrow l}\partial_xV_\phi(t,x)=\lim_{x\searrow l}\partial_xV_\phi(t,x).
\]
Denote $W(t,x):=\partial_xV_\phi(t,x)$. Then a direct computation gives, for $x\in [-l, h(T)]\setminus \{l\}$,
\begin{equation}\label{W}
W_t=J(x-g(t))\phi(g(t))+\int_{g(t)}^{h(t)}J(x-y)\phi_x(t,y)dy-dW+f'(V_\phi)W \mbox{ for } t_x<t<T.
\end{equation}

For $x=l+ct$ with $t>0$, from $V_\phi(t, l+ct)=0$ we deduce 
\[\begin{aligned}
\partial_xV_\phi(t, l+ct)&=-\frac 1 c\partial_t V_\phi(t,l+ct)\\
&=\lim_{x\nearrow l+ct}\frac{-1}c \left[d\int_{g(t)}^{h(t)}J(x-y) \phi(t,y)dy-dV_\phi(t,x)+f(V_\phi(t,x))\right]\\
&=-\frac d c \int_{g(t)}^{h(t)}J(h(t)-y)\phi(t, y)dy.
\end{aligned}
\]
It follows that, as $t$ decreases to 0,
\[
\partial_xV_\phi(t, l+ct)=-\frac d c \int_{g(t)}^{h(t)}J(h(t)-y)\phi(t, y)dy\to -\frac d c \int_{-l}^lJ(l-y)V_0(y)dy=V_0'(l).
\]
Since $\partial_x V_\phi(0,x)=V_0'(x)$ for $x\in [-l, l)$, the above identity implies that $W_0(x):=\partial_xV_\phi(t_x,x)$ is continuous over $[-l, h(T))$.
Moreover,
\begin{equation}\label{W_0}
|W_0(x)|\leq M_0:=\max\Big\{\|V_0'\|_{L^\infty([-l,l])}, \frac{2ld}c \|\phi\|_{L^\infty(\Omega)}\Big\}  \mbox{ for } x\in [-l, h(T)].
\end{equation}
Regarding $W(t,x)$ as the ODE solution of \eqref{W} with parameter $x$ and initial condition $W(t_x, x)=W_0(x)$, we can conclude from standard ODE theory that
$W\in C(\overline \Omega)$. Moreover we can use \eqref{W} to see that $\partial_tV_\phi\in C(\overline\Omega)$. Hence $V_\phi\in \mathbb X^1$.

We now define, for small $s>0$, $\Omega_s$,  $\mathbb X_s$ and $\Gamma_s:\mathbb X_s\to\mathbb X_s$ as in Step 2 above. Then following \cite{cao} as in Step 2 above, we can find $M>0$ independent of $s$ such that $\Gamma_s$ maps $\mathbb X_s^M:=\{\phi\in \mathbb X_s: \|\phi\|_\infty\leq M\}$ into itself and is a contraction mapping provided that $s\in (0, \delta_1]$ for some $\delta_1>0$ sufficiently small. This yields a unique fixed point $\phi$ of $\Gamma_s$ in $\mathbb X_s^M$, which gives a unique solution $U=V_\phi$ of \eqref{U-l} for $0\leq t\leq s$. If $\phi\in \mathbb X^1$, then our analysis about $W=\partial_x V_\phi$ above shows that $V_\phi\in \mathbb X^1$ and thus $U=V_\phi$ is continuously differential in $t$ and $x$ for $x\in [-l, h(s)]$, $t\in [0, s]$.

We show next that by shrinking $s>0$ further if necessary, $\Gamma_s$ has a fixed point  $\psi\in \mathbb X^M_s\cap \mathbb X^1$. If this is proved, then since $\Gamma_s$ has a unique fixed point $\phi\in \mathbb X_s^M$, we necessarily have $\psi=\phi$ and thus $V_\phi=\phi$ is in particular continuously differentiable in $x$  for $t\in [0,s]$, $x\in [g(t), h(t)]$.

For $\phi\in \mathbb X_s^M$, define
\[
C_\phi(t,x):=\int_{t_x}^t[d-f'(V_\phi(\tau,x))]d\tau \mbox{ for } (t,x)\in\overline \Omega_s.
\]
Since $0<s\ll 1$, we have
\[
|C_\phi(t,x)|\leq s (d+\max_{\xi\in [0,M]}|f'(\xi)|)\leq c_M:=d+\max_{\xi\in [0,M]}|f'(\xi)| \mbox{ for } (t,x)\in\overline \Omega_s.
\]
Using \eqref{202} and $V_\phi\in \mathbb X^M_s$ we easily deduce
\begin{equation}\label{W_t}
|\partial_tV_\phi(t,x)|\leq \tilde c_M:=2dlM+d+\max_{\xi\in [0,M]}|f(\xi)| \mbox{ for } (t,x)\in \Omega_s.
\end{equation}

Define
\[
 M_1:= 2e^{c_M}\big(M_0+\|J\|_\infty M\big), \ \mathbb X^1_{M, M_1}:=\{\phi\in \mathbb X^M_s\cap\mathbb X^1: |\phi_t|\leq \tilde c_M, \   |\phi_x|\leq M_1 \mbox{ in } \Omega_s\}.
\]
 We show that, for all small $s>0$ (independent of $M_1$),
\[
 \phi\in \mathbb X^1_{M, M_1}\implies V_\phi\in \mathbb X^1_{M, M_1}.
\]

We already know that $V_\phi\in \mathbb X_s^M$ for such $\phi$ from Step 2. By \eqref{W_t}, we also have 
\[
|\partial_tV_\phi(t,x)|\leq \tilde c_M  \mbox{ in } \Omega_s.
\]
It remains to show 
\[
 |\partial_xV_\phi(t,x)|\leq M_1 \mbox{ in } \Omega_s.
\]
By \eqref{W} we have
\[
[e^{C_\phi(t,x)}W]_t=e^{C_\phi(t,x)}[J(x-g(t))\phi(g(t))+\int_{g(t)}^{h(t)}J(x-y)\phi_x(t,y)dy].
\]
Therefore, making use of \eqref{W_0}, we obtain
\[
\begin{aligned}
|W(t,x)|&=e^{-C_\phi(t,x)}\big|W_0(t_x,x)+\int_{t_x}^t e^{C_\phi(\tau,x)}\Big[J(x-g(\tau))\phi(g(\tau))+\int_{g(\tau)}^{h(\tau)}J(x-y)\phi_x(\tau,y)dy\Big]d\tau\big|\\
&\leq e^{c_M}\left[|W_0(t_x,x)|+se^{c_M}\big(\|J\|_\infty M+2l\|J\|_\infty M_1\big)\right]\\
&\leq e^{c_M}\big(M_0+\|J\|_\infty M\big)+se^{c_M}2l\|J\|_\infty M_1\\
&\leq M_1/2+M_1/2= M_1 \mbox{ for all } (t,x)\in\Omega_s,
\end{aligned}
\]
provided that
\begin{equation}\label{s<}
0<s \leq \delta_2:=\frac 12(e^{c_M}2l\|J\|_\infty)^{-1}.
\end{equation}
This and \eqref{W_t} indicate that, for $0<s\leq\delta_0:=\min\{\delta_1,\delta_2\}$,  $\Gamma_s$ maps $\mathbb X^1_{M, M_1}$ into itself. 

Clearly $\mathbb X^1_{M, M_1}$ is a closed convex subset of the Banach space $\mathbb Y:= C^1(\overline \Omega_s)$ with norm
\[
\|\psi\|_{\mathbb Y}:=\|\psi\|_{C(\overline \Omega_s)}+\|\psi_t\|_{C(\overline \Omega_s)}+\|\psi_x\|_{C(\overline \Omega_s)}.
\]
 From \eqref{202} and \eqref{W}, we easily see that  $\Gamma_s$ is continuous over $\mathbb X^1_{M, M_1}$. If we can further prove that
$\Gamma_s(\mathbb X^1_{M, M_1})$ is compact in $\mathbb Y$, then we can apply the Schauder fixed point theorem to conclude that $\Gamma_s$ has a fixed point in $\mathbb X^1_{M, M_1}$, as desired.

We now examine $V_\phi(t,x)$ for given $\phi\in \mathbb X_{M, M_1}^1$. Clearly $v=V_\phi$ satisfies \eqref{202} with $T$ replaced by $s$. The right hand side of \eqref{202}  with $v=V_\phi$ and $T=s$ indicates that $\partial_t V_\phi(t,x)$ has a bound in $C(\overline\Omega_s)$ which is independent of $\phi\in \mathbb X^1_{M, M_1}$. We already know that $|\partial_x V_\phi(t,x)|\leq M_1$ for $(t,x)\in\Omega_s$. Therefore
$V_\phi(t,x)$ is equicontinuous  over $\Omega_s$ for $\phi\in \mathbb X^1_{M, M_1}$. Now we may use the right hand side of \eqref{202} again to conclude that $\partial_tV_\phi$ is
equicontinuous  over $\Omega_s$ for $\phi\in \mathbb X^1_{M, M_1}$.

We next show that $W(t,x)=\partial_x V_\phi(t,x)$ is also equicontinuous  over $\Omega_s$ for $\phi\in \mathbb X^1_{M, M_1}$.
We will use the expression
\[
W(t,x)=e^{-C_\phi(t,x)}\Big\{W_0(t_x,x)+\int_{t_x}^t e^{C_\phi(\tau,x)}\Big[J(x-g(\tau))\phi(g(\tau))+\int_{g(\tau)}^{h(\tau)}J(x-y)\phi_x(\tau,y)dy\Big]d\tau\Big\}.
\]
From the definition of $C_\phi(t,x)$ and the equicontinuity of $V_\phi$ we see that $C_\phi$ is equicontinuous  over $\Omega_s$ for $\phi\in \mathbb X^1_{M, M_1}$.
Since
\[
W_0(t_x,x)=\begin{cases} V_0'(x), & x\in [-l, l],\\
-\frac d c \int_{x-2l}^{x}J(x-y)\phi(t_x, y)dy, & x\in (l, h(s)],
\end{cases}
\]
we see that $W_0(t_x,x)$ is equicontinuous in $x$ over $[-l, h(s)]$ for $\phi\in \mathbb X^1_{M, M_1}$. Now clearly the mapping
\[
(t,x)\to\int_{t_x}^t e^{C_\phi(\tau,x)}\Big[J(x-g(\tau))\phi(g(\tau))+\int_{g(\tau)}^{h(\tau)}J(x-y)\phi_x(\tau,y)dy\Big]d\tau
\]
is equicontinuous  over $\Omega_s$ for $\phi\in \mathbb X^1_{M, M_1}$. We may now use the above expression of $W(t,x)$ to conclude that it is indeed equicontinuous  over $\Omega_s$ for $\phi\in \mathbb X^1_{M, M_1}$. Thus  $V_\phi(t,x)$, $\partial_tV_\phi(t,x)$ and $\partial_xV_\phi(t,x)$ are all uniformly bounded and equicontinuous over $\Omega_s$ for $\phi\in \mathbb X^1_{M, M_1}$. This implies that 
\[
\Gamma_s(\mathbb X^1_{M, M_1})=\{V_\phi: \phi\in \mathbb X^1_{M, M_1}\}
\]
is pre-compact in $\mathbb Y$. Since this set is closed (due to the continuity of $\Gamma_s$ and the closedness of $\mathbb X^1_{M, M_1}$), it must be a compact set of $\mathbb Y$, as desired. 

We now observe that $\delta_0=\min\{\delta_1,\delta_2\}$ is independent of $M_1$, and the unique solution of \eqref{U-l} is continuously differentiable in $(t,x)$ in $\overline\Omega_{\delta_0}$.

Let us also observe that $U(t,\cdot+ct)\in\mathbb V_l^c$ for all $t\in[0, \delta_0]$. We check this for $c>0$ only as the case $c<0$ is similar. From $U(t,l+ct)=0$ 
we deduce $U_t(t, l+ct)+cU_x(t, l+ct)=0$ and so
\[\begin{aligned}
U_x(t,l+ct)&=-\frac 1 cU_t(t,l+ct)\\
&=-\frac 1c\left[d\int_{g(t)}^{h(t)}J(l+ct-y)U(t,y)dy-dU(t,l+ct)+f(U(t,l+ct))\right]\\
&=-\frac dc\int_{-l}^lJ(l-y) U(t, ct+y)dy, \mbox{ as wanted.}
\end{aligned}
\]

Therefore,  in particular, $U(\delta_0, x+c\delta_0)$ belongs to $\mathbb V^c_l$. Regarding $t=\delta_0$ as the initial time of \eqref{U-l} we can show in the same way  (possibly with a different $M_1$) that the solution is continuously differentiable in $(t,x)$ for $t\in [\delta_0, 2\delta_0]$ and $x\in [g(t), h(t)]$.  By repeating this process we know that the solution of \eqref{U-l} is continuously differentiable in $(t,x)$ for all $t>0$ and $x\in [g(t), h(t)]$.
This completes Step 4 and hence the proof of the lemma. 
\end{proof}
\medskip

Let $U(t,x)$ be the solution of \eqref{U-l} with $V_0\in \mathbb V_l^c$ and $c\not=0$. By Lemma \ref{lem-U}, we see that $U_x$ exists and
hence $V(t,x):=U(t, x+ct)$ solves \eqref{l-0}. We are now ready to prove the following extension of Lemma \ref{c=0}.

\begin{lemma}\label{c-not-0} The conclusions in Lemma \ref{c=0} still hold when $c\not=0$  provided that $V_0\in \mathbb V_l^c$. 
\end{lemma}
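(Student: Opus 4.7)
My plan is to run the scheme of Propositions 3.5 and 3.6 of \cite{cao} in the present setting, replacing the few ingredients that relied on $c=0$ by their drift analogues. There are three substantive pieces: (i) global existence and uniqueness of a solution to \eqref{l-0}; (ii) convergence as $t\to\infty$ to either $0$ or a unique positive stationary solution $V_l$ according to the sign of $\lambda_p(\mathcal L^c_l)$; and (iii) the limit $V_l\to 1$ locally uniformly on $\mathbb R$ when $\lambda_p(\mathcal L^c)>0$. The only genuinely new input is the drift $cV_x$, which I will handle by the method of characteristics and by invoking the comparison principle for nonlocal equations with a first-order term established in \cite{duni20}.

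For existence and uniqueness, assume $c>0$ (the case $c<0$ is parallel, with boundary data at $x=-l$). Along a characteristic $x(t)=x_0-c(t-t_0)$ issuing either from $\{t=0\}$ or from the inflow boundary $\{x=l\}$, \eqref{l-0} reduces to
\[
\frac{d}{dt}V(t,x(t))=d\!\int_{-l}^{l}\!J(x(t)-y)V(t,y)\,dy-dV(t,x(t))+f(V(t,x(t))),
\]
with datum $V_0(x_0)$ on $\{t=0\}$ and $0$ on $\{x=l\}$. Treating the nonlocal term as a known source and applying Banach's fixed-point theorem in $C([0,T]\times[-l,l])$ for small $T$ yields a local mild solution; the constant $K:=\max\{1,\|V_0\|_\infty\}$ is a supersolution (since $f(K)\le 0$ and $K_x=0$), so the comparison principle of \cite{duni20} gives $0\le V\le K$ and allows the construction to be iterated to arbitrary time. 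Uniqueness follows from the same comparison principle.

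For the long-time behaviour let $\phi_l>0$ be a principal eigenfunction of $\mathcal L^c_l$. When $\lambda_p(\mathcal L^c_l)\le 0$, I combine the linear bound coming from $f(u)\le f'(0)u$ with a standard KPP contradiction argument (any nontrivial $\omega$-limit point of $V(t,\cdot)$ would be a positive stationary solution, forcing $\lambda_p(\mathcal L^c_l)>0$) to conclude $V(t,\cdot)\to 0$. When $\lambda_p(\mathcal L^c_l)>0$, the pair $(\epsilon\phi_l,K)$ is an ordered sub/super-solution pair for the stationary problem for $\epsilon$ small (the subsolution property uses $\lambda_p(\mathcal L^c_l)>0$ together with $f(u)/u\to f'(0)$ as $u\to 0^+$); monotone iteration between them delivers a stationary solution $V_l$, whose uniqueness is a standard consequence of the decreasing-quotient KPP hypothesis, and a sandwich between the monotone orbits emanating from $\epsilon\phi_l$ and $K$ then forces $V(t,\cdot)\to V_l$ for every admissible $V_0$.

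For the limit $l\to\infty$, Proposition \ref{l2.3}(ii) together with $\lambda_p(\mathcal L^c)>0$ gives $\lambda_p(\mathcal L^c_l)>0$ for all large $l$. Uniform $C^1_{\mathrm{loc}}$ bounds follow from the stationary equation $cV_l'=-d\int_{-l}^l J(x-y)V_l\,dy+dV_l-f(V_l)$ and $0<V_l\le K$, so some subsequence converges in $C_{\mathrm{loc}}(\mathbb R)$ to a bounded nonnegative $V_\infty$ solving the corresponding equation on $\mathbb R$. The main obstacle will be identifying $V_\infty\equiv 1$: I would first show $V_\infty\not\equiv 0$ by bounding $V_l$ from below by $\epsilon\phi_l$ on any fixed compact set (possible since on compacts $\phi_l$ approaches a positive eigenfunction associated with $\lambda_p(\mathcal L^c)$), and then use a sliding-type comparison with the constant sub- and supersolutions $1\pm\eta$ permitted by $(f_{KPP})$ to squeeze $V_\infty$ to $1$. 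Since the limit does not depend on the subsequence, $V_l\to 1$ locally uniformly, completing the proof.
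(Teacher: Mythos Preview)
Your overall scheme matches the paper's: existence via characteristics (the paper equivalently sets $U(t,x)=V(t,x-ct)$ and runs the parametrised-ODE fixed-point argument of \cite{cao} on the moving domain $[g(t),h(t)]$, invoking Lemma~3.1 of \cite{duni20} for comparison/uniqueness), long-time dynamics via eigenfunction sub/supersolutions and monotone orbits, and the $l\to\infty$ limit referred back to \cite{cao}. The sandwich with $(\epsilon\phi_l,K)$ when $\lambda_p>0$ is essentially what the paper does (it uses $MV_\infty$ rather than a constant on the upper side, but either works once the boundary behaviour at $x=l$ is controlled).

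The genuine gap is in the case $\lambda_p(\mathcal L^c_l)\le 0$. Your claim that ``any nontrivial $\omega$-limit point of $V(t,\cdot)$ would be a positive stationary solution'' is not justified: you would need precompactness of the orbit in $C([-l,l])$ and then that $\omega$-limit sets consist of equilibria, and for this nonlocal drift problem there is no obvious Lyapunov structure making that routine. Moreover the linear bound $V\le Me^{\lambda_p t}\psi$ says nothing when $\lambda_p=0$. The paper bypasses the $\omega$-limit machinery entirely: it first shows by a direct sliding argument (exploiting $\psi(l)=0>\psi'(l)$ and the equation at $x=l$) that no positive stationary solution can exist when $\lambda_p\le 0$, and then observes that $M\psi$ is itself a \emph{stationary} supersolution since $\mathcal L^c_l[M\psi]=\lambda_p M\psi\le 0\le f'(0)M\psi-f(M\psi)$, so the orbit launched from $M\psi$ decreases monotonically to a nonnegative stationary limit, which must be $0$. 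Sandwiching $V(1+t,\cdot)$ under this orbit (which requires the one-line boundary computation $cV_x(t,l)=-d\int_{-l}^{l}J(l-y)V(t,y)\,dy<0$ to arrange $V(1,\cdot)\le M\psi$) then handles both $\lambda_p<0$ and $\lambda_p=0$ uniformly, without any $\omega$-limit analysis.
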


\begin{proof} The existence and uniqueness of the solution $V(t,x)$ to \eqref{l-0} is guaranteed by Lemma \ref{lem-U} as indicated above. It remains to prove the asymptotic behaviour of $V(t,x)$ as $t\to\infty$. 

Let us first observe that $V(t,\cdot)\in\mathbb V_l^c$ for all $t>0$. We check this for $c>0$ only as the case $c<0$ is similar. From $V(t,l)=0$ for deduce $V_t(t,l)=0$ and so from the equation satisfies by $V$ we deduce
\begin{equation}\label{V(t,l)}
0=d\int_{-l}^l J(l-y)V(t,y)dy+cV_x(t, l).
\end{equation}

{\bf Step 1.}  We show that if $\lambda_p(\mathcal L^c_l)\leq 0$, then
\begin{equation}\label{U-to-0}
\lim_{t\to\infty} V(t,x)=0 \mbox{ uniformly for } x\in [-l, l].
\end{equation}
Again we only consider the case $c>0$. 
We first prove that any nonnegative stationary solution of \eqref{l-0} is identically $0$ when $\lambda_p(\mathcal L^c_l)\leq 0$.
Let $\phi(x)$ be a nonnegative stationary solution of \eqref{l-0}. Then $U(t,x)=\phi(x-ct)$ satisfies \eqref{U-l} with $U(0,x)=\phi(x)$. By Lemma 3.1 of \cite{duni20} applied to $U$ we see that either $\phi\equiv 0$ or $\phi(x)>0$ for $x\in [-l, l)$. 

We show that it is impossible to have $\phi(x)>0$ in $[-l, l)$ if $\lambda_p(\mathcal L^c_l)\leq 0$. Otherwise,   $f(\phi(x))<f'(0)\phi(x)$ for $x\in [-l, l)$
by ${\bf (f_{KPP})}$, and we have
\[
\mathcal L_l^c[\phi](x)=f'(0)\phi(x)-f(\phi(x))>0 \mbox{ for } x\in [-l, l).
\]
It follows that
\[
d\int_{-l}^lJ(x-y)\phi(y)dy+c\phi'(x)>[d-f'(0)] \phi(x) \mbox{ in } (-l,l),
\]
which implies
\[
\liminf_{x\to l}\phi'(x)\geq -\frac d c \int_{-l}^lJ(l-y)\phi(y)dy=:-\sigma_0.
\]
Let $\psi(x)$ be a positive eigenfunction associated with the principal eigenvalue $\lambda_p(\mathcal L_l^c)$:
\begin{equation}\label{eigenfn}
\mathcal L_l^c[\psi](x)=\lambda_p(\mathcal L^c_l)\psi(x),\ \psi(x)>0 \mbox{ for } x\in [-l, l),\ \psi(l)=0.
\end{equation}
Then
\[
\lim_{x\to l} \psi'(x)=-\frac d c \int_{-l}^lJ(l-y)\psi(y)dy<0.
\]
Therefore 
\[
v_\delta(x):=\psi(x)-\delta\phi(x)>0 \mbox{ in } [-l, l) \mbox{ for all small $\delta>0$.}
\]
Clearly $v_\delta(-l)<0$ for all large $\delta>0$. Therefore
\[
\delta_0:=\sup\{\delta: v_\delta(x)>0 \mbox{ in } [-l, l)\}
\]
is a well-defined positive number and 
\[
v_{\delta_0}(x)\geq 0 \mbox{ in } [-l, l].
\]
Moreover, since $\lambda_p(\mathcal L_l^c)\leq 0$, we have
\begin{equation}\label{delta00}
\begin{aligned}
d\int_{-l}^lJ(x-y)v_{\delta_0}(y)dy+c v_{\delta_0}'(x)&<[d-f'(0)]v_{\delta_0}(x)+\lambda_p(\mathcal L_l^c)\psi(x)\\
&\leq [d-f'(0)] v_{\delta_0}(x) \mbox{ in } (-l, l).
\end{aligned}
\end{equation}

We must have $v_{\delta_0}(x)\not\equiv 0$ for otherwise $\phi(x)\equiv \frac 1 {\delta_0}\psi(x)$, which leads to the contradiction 
\[
0<\mathcal{L}^{c}_{l}[\phi](x)=\lambda_p(\mathcal L_l^c)\phi(x)\leq 0 \mbox{ for } x\in (-l, l).
\]
 So we have two possibilities:

(a): $v_{\delta_0}(x)>0$ in $[-l, l)$ and $v_{\delta_0}(l)=0$,

(b): $\{x\in [-l, l): v_{\delta_0}(x)>0\}$ has a limiting  point $x_0\in [-l, l)$ with $v_{\delta_0}(x_0)=0$.\medskip

In case (b) we must have $v_{\delta_0}'(x_0):=\lim_{x\to x_0}v_{\delta_0}'(x)\geq 0$. Then letting $x\to x_0$ in \eqref{delta00}  we obtain, due to $J(0)>0$,
\[
0<\int_{-l}^lJ(x_0-y)v_{\delta_0}(y)dy\leq 0,
\]
which is impossible.

In case (a) the definition of $\delta_0$ implies that 
$\limsup_{x\to l}v_{\delta_0}'(x)\geq 0$. Choose $y_n\in (-l, l)$ such that $y_n\to l$ and $v_{\delta_0}'(y_n)\to \limsup_{x\to l}v_{\delta_0}'(x)$ as $n\to\infty$, and then take $x=y_n$ in \eqref{delta00} and let $n\to\infty$; we again arrive at the contradiction
\[
0<\int_{-l}^lJ(l-y)v_{\delta_0}(y)dy\leq 0.
\]
Therefore we must have $\phi(x)\equiv 0$, as desired.\bigskip

We are now ready to show \eqref{U-to-0}, by a standard approach. Let $\psi$ be a positive eigenfunction associated to $\lambda_p(\mathcal L_l^c)$ as given by \eqref{eigenfn}.
Then it is easily checked that $\psi\in \mathbb V_l^c$, and for any $M>0$, using ${\bf (f_{KPP})}$ we have
\[
\mathcal L_l^c[M\psi](x)=\lambda_p(\mathcal L_l^c)(M\psi(x))\leq 0\leq f'(0)(M\psi(x))-f(M\psi(x)).
\]
This implies that the unique solution of \eqref{l-0} with $V_0=M\psi\in \mathbb V_l^c$, denoted by $\bar V(t,x)$, is decreasing in $t$, and hence
\[
0\leq \bar V_\infty(x):=\lim_{t\to\infty} \bar V(t,x) \mbox{ exists};
\]
moreover, it is also easy to show that $\bar V_\infty$ is a nonnegative stationary solution of \eqref{l-0}. By our early conclusion on such stationary solutions we have $\bar V_\infty\equiv 0$.

On the other hand, for the unique solution $V(t,x)$ of \eqref{l-0}, it is easily seen that  for any $t>0$,
\[
V(t,\cdot)\in C^1([-l, l]) \mbox{ and $V(t,x)>0 $ in $[-l, l)$.}
\]
Since $\psi\in C^1([-l,l])$, $\psi(x)>0$ in $[-l, l)$ with $\psi(l)=0>\psi'(l)$, we can find $M>1$ large so that $V(1,x)\leq M\psi(x)$ in $[-l, l]$.
We may then use the comparison principle (Lemma 3.1 of \cite{duni20} applied to the equivalent problem in the form of \eqref{U-l}) to conclude that 
\[
0\leq V(1+t, x)\leq \bar V(t,x) \mbox{ for } t>0,\ x\in [-l, l].
\]
Letting $t\to\infty$ we obtain \eqref{U-to-0}.

\medskip

\noindent
{\bf Step 2.} We show that if $ \lambda_p(\mathcal L^c_l)> 0$, then
\[
		\lim_{t\to\infty} V(t,x)=V_l(x) \mbox{ uniformly in $x\in[-l,l]$,}
		\]
		where $V_l(x)$ is the unique positive stationary solution of \eqref{l-0}. 

Let $\psi$ be an eigenfunction of $\lambda_p(\mathcal L_l^c)$ given by \eqref{eigenfn}. Then for every small $\delta>0$, 
\[
\mathcal L_l^c[\delta\psi](x)=\lambda_p(\mathcal L_l^c)(\delta\psi(x)) \geq f'(0)(\delta\psi(x))-f(\delta\psi(x)).
\]
This implies that $\delta \psi$ is a lower solution of the stationary problem of \eqref{l-0}, and the unique solution of \eqref{l-0} with $V_0=\delta \psi\in\mathbb V_l^c$, denoted by $\underline V(t,x)$, is increasing in $t$. Moreover, let $w(t)$ be the solution of the ODE problem
\[
w'=f(w), \ w(0)=\delta\|\psi\|_\infty,
\]
then we can use the maximum principle Lemma 3.1 of \cite{duni20} to compare $\underline V(t, x-ct)$ with $w(t)$ over $\{(t,x): t\geq 0,\ x\in [-l+ct, l+ct]\}$ to conclude that $\underline V(t,x)\leq w(t)$. Clearly $w(t)\to 1$ as $t\to\infty$. It follows that
\[
  V_\infty(x):=\lim_{t\to\infty} \underline V(t,x) \mbox{ exists},
\]
and $0<V_\infty(x)\leq 1$ in $[-l, l)$.  It is also easy to show that $V_\infty$ is a  stationary solution of \eqref{l-0}. 

We show next that $V_\infty$ is the unique positive stationary solution of \eqref{l-0}. Let $V(x)$ be any positive stationary solution of \eqref{l-0}.
From the equation satisfied by $V$ and $V_\infty$ we easily deduce $V, V_\infty\in C^1([-l,l])$ with
\[
V(l)=V_\infty(l)=0,\ V'(l)<0,\ V_\infty'(l)<0,\ V(x)>0,\ V_\infty(x)>0 \mbox{ for } x\in [-l, l).
\]
Therefore
\[
\delta_0:=\sup\{\delta: V(x)-\delta V_\infty(x)>0 \mbox{ in } [-l, l)\}
\]
is a well-defined positive number and 
\[
V(x)\geq \delta_0 V_\infty(x) \mbox{ in } [-l, l].
\]
We claim that $\delta_0\geq 1$. Otherwise $\delta_0\in (0, 1)$ and by ${\bf (f_{KPP})}$ we have $f(\delta_0 V_\infty(x))\geq \delta_0 f(V_\infty(x))$ in $[-l, l]$.
Therefore $v(x):=V(x)-\delta_0 V_\infty(x)$ satisfies
\begin{equation}\label{delta0-v}
\begin{aligned}
d\int_{-l}^lJ(x-y)v(y)dy+c v'(x)&=dv(x)-f(V(x))+\delta_0 f(V_\infty(x))\\
&\leq dv(x)-f(V(x))+f(\delta_0V_\infty(x))\\
&=(d-c(x))v(x) \mbox{ in } (-l, l),
\end{aligned}
\end{equation}
where $c(x)=f'(\theta(x))$ for some $\theta(x)\in [\delta_0V_\infty(x), V(x)]$.

We must have $V(x)\not\equiv \delta_0V_\infty(x)$ for otherwise from the equations satisfied by $V$ and $V_\infty$ we deduce $f(\delta_0V_\infty(x))\equiv \delta_0f(V_\infty(x))$, which is a contradiction 
to ${\bf (f_{KPP})}$.
 So we have two possibilities:

(a): $v(x)>0$ in $[-l, l)$ and $v(l)=0$,

(b): $\{x\in [-l, l): v(x)>0\}$ has a limiting  point $x_0\in [-l, l)$ with $v(x_0)=0$.\medskip

We may then use \eqref{delta0-v} and the definition of $\delta_0$ to deduce a contradiction much as what we did by using \eqref{delta00} for $v_{\delta_0}$.
Therefore we must have $\delta_0\geq 1$ and hence $V(x)\geq V_\infty(x)$. Similarly we can prove $V_\infty(x)\geq V(x)$. Thus $V=V_\infty$ and the desired uniqueness result is proved.

By ${\bf (f_{KPP})}$ we see that for $M>1$, $f(MV_\infty(x))\leq Mf(V_\infty(x))$ in $[-l,l]$. This implies that $MV_\infty(x)$ is an upper solution of the stationary problem of \eqref{l-0}, and the unique solution  of \eqref{l-0} with $V_0=MV_\infty\in\mathbb V_l^c$, which we will denote by $\tilde V(t,x)$, is decreasing in $t$.
Therefore $\tilde V_\infty(x):=\lim_{t\to\infty} \tilde V(t,x)$ exists, and it can be easily shown that $\tilde V_\infty$ is a stationary solution of \eqref{l-0}. From \eqref{V(t,l)} we see that $V_x(1, l)<0$. Hence we can
 choose $\delta>0$ small and $M>1$ large such that
\[
\delta \psi(x)\leq V(1,x)\leq MV_\infty(x) \mbox{ in } [-l, l].
\]
Then by the comparison principle (deduced from Lemma 3.1 of \cite{duni20}) we obtain
\[
\underline V(t,x)\leq V(1+t,x)\leq \tilde V(t,x) \mbox{ for } t>0,\ x\in [-l, l].
\]
Letting $t\to\infty$ we obtain
\[
V_\infty(x)\leq \liminf_{t\to\infty} V(t,x)\leq\limsup_{t\to\infty}V(t,x)\leq \tilde V_\infty(x) \mbox{ uniformly for } x\in [-l, l].
\]
As $V_\infty$ is the unique positive stationary solution of \eqref{l-0}, we necessarily have $\tilde V_\infty\equiv V_\infty$, and the above inequalities thus imply
\[
		\lim_{t\to\infty} V(t,x)=V_\infty(x) \mbox{ uniformly in $x\in[-l,l]$.}
		\]
This finishes Step 2.
\medskip

\noindent
{\bf Step 3.}  We show that if $\lambda_p(\mathcal L^c)>0$ and hence
		$\lambda_p(\mathcal L^c_l)> 0$ for all large $l>0$, then the unique positive stationary solution $V_l(x)$ of \eqref{l-0} satisfies
		\[
		\lim_{l\to\infty} V_l(x)=1 \mbox{ uniformly for $x$ in any bounded interval of $\mathbb R$}.
		\]
This can be proved by the same technique as in the proof of Proposition 3.6 of \cite{cao}. We omit the details.

 The proof of the lemma is now complete.
\end{proof}

Theorem \ref{c-l} clearly follows directly from Lemmas \ref{c=0} and \ref{c-not-0}.

\subsection{Problem \eqref{U-l} revisited and  denseness of the set $\mathbb V_l^c$}

The following result on \eqref{U-l} follows directly from Theorem \ref{c-l} and the comparison principle.
\begin{corollary}\label{c-U}\!\!\!\footnote{ It is possible to show, by a simple perturbation argument,  that the conclusion here holds without the restriction \eqref{U_0-cond}, namely for any $U_0\in\mathbb U_l^c$ which is not identically 0.}
Let $U$ be the unique solution of \eqref{U-l} with $U(0,\cdot)=U_0\in\mathbb U_l^c$. If there exist $\underline V_0, \overline V_0\in \mathbb V_l^c$ such that
\begin{equation}\label{U_0-cond}
0\not\equiv \underline V_0(x)\leq U_0(x)\leq \overline V_0(x) \mbox{ for } x\in [-l, l],
\end{equation}
then
\[
\lim_{t\to\infty} U(t,x+ct)=\begin{cases} 0 &\mbox{ uniformly in $x\in[-l,l]$ if } \lambda_p(\mathcal L^0_l)\leq 0,\\
		V_l(x) &\mbox{ uniformly in $x\in[-l,l]$ if } \lambda_p(\mathcal L^0_l)> 0,
		\end{cases}
		\]
		where $V_l(x)$ is the unique positive stationary solution of \eqref{l-0}.
\end{corollary}

Our next result indicates that the condition on the initial function $U_0$ in Corollary \ref{c-U} is not difficult to satisfy.

\begin{lemma}\label{lem-V_0}
For any $U_0\in \mathbb U^c_l$ and small $\epsilon>0$, there exists $V_0\in \mathbb V^c_l$ such that
\[
|U_0(x)-V_0(x)|\leq \epsilon \mbox{ for } x\in [-l, l].
\]
\end{lemma}
\begin{proof} For definiteness, we assume that $c>0$ and hence $U_0(l)=0$.

Firstly we fix a function $\tilde V_0\in C^1([-l,l])$ such that
\[
\tilde V_0\geq 0, \ \tilde V_0(l)=U_0( l)=0,\  \tilde V_0'(l)=0 \mbox{ and } \|U_0-\tilde V_0\|_{C([-l,l])}<\epsilon/2.
\]

Secondly we let $\phi_\delta(x)$ be a family of $C^1([-l,l])$ functions satisfying
\[
0\leq \phi_\delta(x)\leq 1 \mbox{ in } [-l,l],\ \phi_\delta(l)=0,\ \phi_\delta'(l)=-1/\delta \mbox{ for } \delta\in (0, 1].
\]

We next show that there exists $\delta>0$ small and $m\in [0, \epsilon/2)$ such that $V_0:=\tilde V_0+m\phi_\delta$ has the desired properties.
We calculate 
\[\begin{aligned}
&V_0'(l)+\frac dc\int_{-l}^l J(l-y)V_0(y)dy\\
&=\tilde V_0'(l)+\frac dc\int_{-l}^l J(l-y)\tilde V_0(y)dy+m\Big[-\frac 1\delta+\frac dc\int_{-l}^l J(l-y)\phi_\delta(y)dy\Big].
\end{aligned}
\]
Clearly
\[
a:=\tilde V_0'(l)+\frac dc\int_{-l}^l J(l-y)\tilde V_0(y)dy=\frac dc\int_{-l}^l J(l-y)\tilde V_0(y)dy\geq 0.
\]
Since $\phi_\delta(x)\in [0, 1]$ we easily see that for sufficiently small $\delta>0$, 
\[
\displaystyle b_\delta:=-\frac 1\delta+\frac dc\int_{-l}^l J(l-y)\phi_\delta(y)dy<0.
\]
Therefore $V_0$ defined above belongs to $\mathbb V_l^c$ if $m=-a/b_\delta$.
Since $a/b_\delta=O(\delta)$ as $\delta\to 0$, we can fix $\delta>0$ small enough such that $0\leq -a/b_\delta<\epsilon/2$.
Then 
\[
|V_0(x)-U_0(x)|\leq |\tilde V_0(x)-U_0(x)|+m\phi_\delta(x)< \epsilon/2+m<\epsilon \mbox{ for } x\in [-l,l].
\]
The proof is complete.
\end{proof}

\subsection{Proof of Theorem \ref{th1.3a}}
		
		  Evidently, the solution of the ODE problem $\bar u'=f(\bar u)$ with $\bar u(0)=\|u_0\|_\infty$ converges to $1$ as $t\to\infty$. By the comparison principle, we have $u(t,x)\leq \bar u(t)$, and so
		\begin{align}\label{5.6a}
			\limsup_{t\to\infty}  u(t,x)\leq \lim_{t\to\infty} \bar u(t)=1 \mbox{ uniformly for } x\in\mathbb R.
		\end{align}
	
	Fix $c_*\in (c_*^-, c_*^+)$. By Proposition \ref{l9.2}, $\lambda_p(\mathcal L^{c_*})>0$ and hence $\lambda_p(\mathcal L_l^{c_*})>0$ for all large $l$.		
	\medskip
	
	\noindent
	{\bf Step 1.} 	We show that $u(t,x+c_*t)\to 1$ locally uniformly in $x\in \mathbb{R}$ as $t\to\infty$.
				
		By the strong maximum principle, we have $u(1,x)>0$ for $x\in\mathbb R$. For each large $l$ such that $\lambda_p(\mathcal L_l^{c_*})>0$, the continuous function $u(1,x)$ is positive on $[-l,l]$ and hence using Lemma \ref{lem-V_0} we can easily find some
		$V_0^l\in \mathbb V_l^c$ such that $0\not\equiv V_0^l(x)\leq u(1,x)$ in $[-l,l]$.
		Let $U^l(t,x)$ be the solution of \eqref{U-l} with $U_0(x)=V^l_0(x)$ and $c=c_*$. By Corollary \ref{c-U} we know that 
		\[
		\lim_{t\to\infty}U^l(t,x+c^*t)=V_l(x) \mbox{ uniformly for } x\in [-l,l],
		\]
		 $V_l$ being the unique positive stationary solution of \eqref{l-0} with $c=c_*$.
		 
		 Since $u(1, x)\geq V^l_0(x)$ for $x\in [-l,l]$, by the comparison principle (deduced from Lemma 3.1 of \cite{duni20}) we obtain $u(t+1,x)\geq U^l(t,x)$ for $t>0$ and $x\in [-l+c_*t,l+c_*t]$. It follows that
		 \[
		 \liminf_{t\to\infty} u(t,x+c_*t)\geq \lim_{t\to\infty}U^l(t-1,x+c_*t)=V_l(x+c_*) \mbox{ uniformly for } x+c_*\in [-l, l].
		 \]
		 By Theorem \ref{c-l}, we further have $V_l(x)\to 1$ as $l\to\infty$ locally uniformly in $\mathbb R$. This implies, by the above inequality,
		 \[
		 \liminf_{t\to\infty} u(t,x+c_*t)\geq 1 \mbox{ locally uniformly for } x\in \mathbb R.
		 \]
		 The desired conclusion clearly follows from this and \eqref{5.6a}.
		 \medskip
		 
		 \noindent
		 {\bf Step 2.} We show that $\lim_{t\to\infty} u(t,x)=1 $ uniformly for $x\in [a_1t, b_1t]$ provided that $[a_1, b_1]\subset (c_*^-, c_*^+)$.
		 
		 Let  $\epsilon>0$ be any given small constant. 	Then we can find $M>0$ such that
		\begin{align}\label{2.16m}
			\int_{-M}^{M}J(x) dx>1-\frac{1}{2d}\frac{f(1-\epsilon)}{1-\epsilon}.
		\end{align}
			Denote
		\begin{align*}
			\Omega_t:=[a_1t, b_1t],\quad \Omega_t^M:=[a_1t-M, b_1t+M].
		\end{align*}

		By choosing  $c_*=a_1$ and $c_*=b_1$ in Step 1, we see that for all large $t$, say $t\geq T>0$,
		it holds
		\begin{align}\label{u>}
			u(t,x)\geq 1-{\epsilon} \mbox{ for } \ x\in \Omega_t^M \setminus\Omega_t.
		\end{align}
			Moreover, it follows from the equation satisfied by $u$ that
		\begin{align*}
		u_t&\geq d\int_{\Omega^M_t}J(x-y)u(t,y)\,dy-du+f(u)\\
		&=d\int_{\Omega_t}J(x-y)u(t,y)\,dy-du+f(u)+d\int_{\Omega_t^M \setminus\Omega_t}J(x-y)u(t,y)\,dy \quad \mbox{ for }  t\geq T,\ x\in\Omega_t.
		\end{align*}

Choose $f_1\in C^1([0,\infty))$ satisfying  $f_1(0)=f_1(1-\epsilon)=0$, $f_1(s)>0$ for $s\in (0,1-\epsilon)$ and $f_1(s)<0$ for $s\in (1-\epsilon,\infty)$, and also
		\begin{align*}
   f_1(s)\leq f(s)-\frac{s}{2}\frac{f(1-\epsilon)}{1-\epsilon}=s\Big[\frac {f(s)}s-\frac{1}{2}\frac{f(1-\epsilon)}{1-\epsilon}\Big] \mbox{ for } \ s\in (0,1-\epsilon].
		\end{align*}
Then the solution $v(t)$ to
		\begin{equation*}
			\left\{
			\begin{array}{ll}
				v'=f_1(v), \\
				v(T)=\min\big\{\min_{x\in\Omega_T}u(T,x),1-\epsilon\big\}>0,
			\end{array}
			\right.
		\end{equation*}
		 is nondecreasing and  $\lim_{t\to \infty} v(t)=1-\epsilon$. Moreover,
		\begin{align}\label{2.17}
		f(v(t))-f_1(v(t))\geq \frac{v(t)}{2}\frac{f(1-\epsilon)}{1-\epsilon} \ \mbox{ for } \ t\geq T.
		\end{align}
		
	Define 	$v(t,x):=v(t)$. We claim that
		$$v_t\leq d\int_{\Omega_t}J(x-y)v(t,y)\,dy-dv+f(v)+d\int_{\Omega_t^M \setminus\Omega_t}J(x-y)u(t,y)\,dy\quad \mbox{ for } t\geq T,\, x\in\Omega_t.$$
		In fact,  using \eqref{u>}, \eqref{2.16m} and \eqref{2.17}, for $t\geq T$ and $x\in\Omega_t$, we have
		\begin{align*}
			& d\int_{\Omega_t}J(x-y)v(t,y)\,dy-dv+f(v)+d\int_{\Omega_t^M \setminus\Omega_t}J(x-y)u(t,y)\,dy\\
			&\geq  dv(t)\left(\int_{\Omega_t}J(x-y)\,dy-1+\int_{\Omega_t^M \setminus\Omega_t}J(x-y)\,dy\right)+f(v)\\
			&\geq  dv(t)\left(\int_{-M}^MJ(z)\,dz-1\right)+f(v)\geq- \frac{v(t)}{2}\frac{f(1-\epsilon)}{1-\epsilon}+f(v)\\
			&\geq  f_1(v)=v_t.
		\end{align*}
		 By the comparison principle we obtain
		$u(t,x)\geq v(t)$ for $x\in \Omega_t$ and $t\geq T$, and so
		\begin{align*}\label{5.7a}
			\liminf_{t\to\infty} \min_{x\in\Omega_t}u(t,x)\geq 1-\epsilon.
		\end{align*}
		In view of \eqref{5.6a},  the arbitrariness of $\epsilon$ in the above inequality implies the desired conclusion. 	

\medskip

\noindent
{\bf Step 3.} We show that 
\begin{align*}
	 \lim_{t\to \infty} u(t,x)=\begin{cases} 
	 0 \mbox{ uniformly for $x\leq a_2t$ provided that $c_*^->-\infty$ and $a_2<c_*^-$},\\
	 0 \mbox{ uniformly for $x\geq b_2t$ provided that $c_*^+<\infty$ and $b_2>c_*^+$}.
	 \end{cases}
	\end{align*}

This follows a standard approach. We provide the short proof for completeness.

Suppose $c_*^->-\infty$ and $a_2<c_*^-$.  Fix $\bar a_2\in (a_2, c_*^-)$. By the definition of $c_*^-$ we can find $\nu>0$ such that
\[
\frac{d\int_{\mathbb{R}} J(x)e^{-\nu x}\,dx - d + f'(0)}{-\nu}>\bar a_2.
\]
Since $u_0(x)$ is compactly supported, by choosing $M>0$ large enough we have
\[
M e^{\nu x}>u_0(x) \mbox{ for } x\in \mathbb R.
\]
Let $\bar u(t,x):=M e^{\nu(x-\bar a_2 t)}$. Direct calculation gives, in view of ${\bf (f_{KPP})}$,
\begin{align*}
\bar u_t-d\int_{\mathbb R} J(x-y)\bar u(t,y)dy+d\bar u-f(\bar u)&=\bar u\big[-\bar a_2\nu-d\int_{\mathbb R}J(y) e^{-\nu y}dy+d\big]-f(\bar u)\\
&\geq \bar u\big[-\bar a_2\nu-d\int_{\mathbb R}J(y) e^{-\nu y}dy+d-f'(0)\big]\\
&\geq 0.
\end{align*}
Therefore we can use the comparison principle to conclude that $0\leq u(t,x)\leq \bar u(t,x)$ for $t>0$ and $x\in\mathbb R$. Clearly $\bar u(t,x)\to 0$ as $t\to\infty$ uniformly for $x\leq a_2 t$. It follows that the same holds for $u(t,x)$. 

If $c_*^+<\infty$ and $b_2>a_*^+$, the proof is similar, and we omit the details. The proof of Theorem \ref{th1.3a} is now complete. \hfill $\Box$
\bigskip

\noindent {\bf Acknowledgements.}
Du and Ni were supported by the Australian Research Council. Part of this research was carried out during
Fang's visit of  the University of New England supported by the China Scholarship Council.

\end{document}